\documentclass[10pt]{amsart}
\usepackage[margin=1.2in]{geometry}
\usepackage{color}
\usepackage{amssymb}
\newtheorem{theorem}{Theorem}[section]
\newtheorem{lemma}[theorem]{Lemma}
\newtheorem{proposition}{Proposition}[section]

\theoremstyle{definition}

\theoremstyle{remark}
\newtheorem{remark}[theorem]{Remark}

\numberwithin{equation}{section}



\usepackage{graphicx}
\DeclareGraphicsExtensions{.pdf,.png,.jpg}
\begin{document}
	\title[Relativistic BGK model for polyatomic gases near equilibrium]{On a relativistic BGK model for polyatomic gases near equilibrium}
	\author[B.-H. Hwang]{Byung-Hoon Hwang}
	\address{Department of Mathematics, Sungkyunkwan University, Suwon 440-746, Republic of Korea}
	\email{bhh0116@skku.edu}
	\author[T. Ruggeri]{Tommaso Ruggeri}
	\address{Department of Mathematics and Alma Mater, Research Center on Applied Mathematics
		AM$^2$, University of Bologna, Bologna, Italy}	
	\email{tommaso.ruggeri@unibo.it}
		\author[S.-B. Yun]{Seok-Bae Yun}
	\address{Department of Mathematics, Sungkyunkwan University, Suwon 440-746, Republic of Korea}
	
	\email{sbyun01@skku.edu}
	\keywords{relativistic kinetic theory of gases, relativistic Boltzmann equation, relativistic BGK model, nonlinear energy method}
	
	\begin{abstract}
		Recently, a novel relativistic polyatomic BGK model was suggested by Pennisi and Ruggeri [J. of Phys. Conf. Series, 1035, (2018)]  to overcome drawbacks of the Anderson-Witting model and Marle model.
        In this paper, we prove the unique existence and asymptotic behavior of classical solutions to the relativistic polyatomic BGK model when the initial data is sufficiently close to a global equilibrium.
	\end{abstract}
	\maketitle
	\section{Introduction}
	  	 In the classical kinetic theory of gases, the BGK relaxation operator \cite{BGK} has been successfully used in place of the Boltzmann collision operator, yielding satisfactory simulation of the Boltzmann flows at much lower numerical cost. 
	The relativistic generalization of the BGK approximation was first made by Marle \cite{Mar2,Mar3} and successively by Anderson and Witting \cite{AW}. The  Marle model is an extension of the classical BGK model in the Eckart frame \cite{CK,E}, and the   Anderson-Witting model obtains such extention using the Landau-Lifshitz frame \cite{CK,LL}.
	
	These models have been widely employed for various relativistic problems  \cite{ CKT,DHMNS2,FRS,FRS2,HM,JRS}, but several drawbacks were also recognized in the literature. For the Marle model, the relaxation time becomes unbounded for particles with zero rest mass \cite{CK}. The problem for the  Anderson-Witting model is that the Landau-Lifshitz frame was established on the assumption that some of high order non-equilibrium quantities are negligible near equilibrium \cite{AW,CK}, which inevitably leads to some loss of consistency.   
	
	Starting from these considerations, Pennisi and Ruggeri proposed a variant of Anderson-Witting model in the Eckart frame both for monatomic and polyatomic gas, and proved that the conservation laws of particle number and energy-momentum are satisfied and the H-theorem holds \cite{PR} (see also \cite{newbook}). In the case of polyatomic gas, the Cauchy problem for the relativistic BGK model reads:
	\begin{align}\label{PR}
	\begin{split}
		\partial_t F+\hat{p}\cdot\nabla_x F&=\frac{U_\mu p^\mu}{c \tau p^0}\left\{\left(1-p^\mu q_\mu \frac{1+\frac{\mathcal{I}}{mc^2}}{bmc^2}\right)F_E-F\right\},\cr
		F(0,x,p)&= F_0(x,p),
	\end{split}
\end{align}
	where $F\equiv F(x^\alpha , p^\beta,\mathcal{I} )$ is the momentum distribution function representing the number density of relativistic particles at the phase point $(x^\alpha,p^\beta)~(\alpha,\beta=0,1,2,3)$ with the microscopic internal energy $\mathcal{I}\in \mathbb{R}^+$ that  takes into account the energy due to the internal degrees of freedom of the particles. Here $x^\alpha=(ct,x)\in\mathbb{R}^+\times \mathbb{R}^3$ is the space-time coordinate, and $p^\beta=(\sqrt{(mc)^2+|p|},p)\in\mathbb{R}^+\times \mathbb{R}^3$ is the four-momentum. Greek indices
run from $0$ to $3$ and the repeated indices are assumed to be summed over its whole range;  $c$ is the light velocity, $\hat{p}:=cp/p^0$ is the normalized momentum, $m$ and $\tau$ are respectively the mass and the relaxation time in the rest frame where the momentum of particles is zero.   The macroscopic quantity $b$ is given in \eqref{b def}. Throughout this paper, the metric tensor $g_{\alpha\beta}$ and its inverse $g^{\alpha\beta}$  are given by 
$$
g_{\alpha\beta}=g^{\alpha\beta}=\text{diag}(1,-1,-1,-1)
$$
and we use the raising and lowering indices as
$$
g_{\alpha\mu}p^\mu=p_\alpha,\qquad g^{\alpha\mu}p_\mu=p^\alpha,
$$
which implies $p_\alpha=(p^0,-p)$. The Minkowski inner product is defined by
$$
p^\mu q_\mu=p_\mu q^\mu=p^0q^0-\sum_{i=1}^3 p^iq^i.
$$
To present the macroscopic fields of $F$, we define the particle-particle flux $V^\mu$ and energy-momentum tensor $T^{\mu\nu}$ \cite{PR,PR2} by
	\begin{equation}\label{VT}
V^\mu=mc\int_{\mathbb{R}^3}\int_0^\infty p^\mu F \phi(\mathcal{I}) \,d\mathcal{I}\,\frac{dp}{p^0},\qquad T^{\mu\nu}=\frac{1}{mc} \int_{\mathbb{R}^3}\int_0^\infty p^\mu p^\nu F\left( mc^2 + \mathcal{I} \right)   \phi(\mathcal{I})\,d\mathcal{I}\,\frac{dp}{p^0}.
\end{equation}
Here $\phi(\mathcal{I})\ge 0$ is the state density of the internal mode such that  $\phi(\mathcal{I}) \, d  \mathcal{I}$ represents the number of the internal states of a molecule having the internal energy between $\mathcal{I}$ and $\mathcal{I}+d \mathcal{I}$   which can take various forms according to the physical context.  For example, the following form of $\phi(\mathcal{I})$ is employed in \cite{PR}
	\begin{equation*}
	\phi(\mathcal{I})=\mathcal{I}^{(f^i-2)/2} 
	\end{equation*}
to get the correct classical limit of internal energy of polyatomic gas.	Here $f^i\ge 0$ is the internal degrees of freedom due to the internal motion of molecules. In this paper, instead of choosing the specific form of $\phi(\mathcal{I})$, we develop an existence theory for \eqref{PR} that is valid for a general class of $\phi(\mathcal{I})$ satisfying a certain  condition which covers all the physically relevant cases (see \eqref{phi condition}).
 
Going back to \eqref{VT}, we introduce the decomposition of $V^\mu$ and $T^{\mu\nu}$: 
\begin{align}\label{Eckart}
	\begin{split}
		V^\mu&=nmU^\mu\cr
		T^{\mu\nu}&=\sigma^{\langle \mu\nu\rangle}+(p+\Pi)h^{\mu\nu}+\frac{1}{c^2}\left(q^\mu U^\nu+q^\nu U^\mu\right)+\frac{e}{c^2}U^\mu U^\nu
\end{split}\end{align}
which is called the Eckart frame \cite{CK,E}. In \eqref{Eckart}, $nm=c^{-1}\sqrt{V^\mu V_\mu}$ denotes the number density, $U^\mu=(\sqrt{c^2+|U|^2},U)$ the Eckart four-velocity, $p$ the pressure, $\Pi$ the dynamical pressure, $h^{\mu\nu}=-g^{\mu\nu}+\frac{1}{c^2}U^\mu U^\nu$ the projection tensor, $\sigma^{\mu\nu}=T^{\alpha\beta}\left(h^\mu_\alpha h^\nu_\beta-\frac{1}{3}h^{\mu\nu}h_{\alpha\beta}\right)$ the viscous deviatoric stress, $e$ the energy, and $q^\mu=-h^\mu_\alpha U_\beta T^{\alpha\beta}$ the heat flux. We recall that only $14$ field variables in \eqref{Eckart} are independent due to the constraints: 
\begin{equation*}
	U_\alpha U^\alpha = c^2, \quad   U_\alpha q^\alpha=0, \quad    U_\alpha \sigma^{<\alpha \beta>}  =0,  \quad  g_{\alpha \beta} \,  \sigma^{<\alpha \beta>}  =0.
\end{equation*}
The macroscopic fields that appear frequently in this paper are defined as a suitable moment of $F$ in the following manner.
\begin{align}\label{macroscopic fields}\begin{split}
		n^2&=\left(\int_{\mathbb{R}^3}\int_0^\infty F\phi(\mathcal{I}) \,d\mathcal{I}\,dp\right)^2-\sum_{i=1}^3 \left(\int_{\mathbb{R}^3}\int_0^\infty p^iF \phi(\mathcal{I}) \,d\mathcal{I}\,\frac{dp}{p^0}\right)^2,\cr U^\mu&=\frac{c}{n}\int_{\mathbb{R}^3}\int_0^\infty p^\mu F\phi(\mathcal{I}) \,d\mathcal{I}\,\frac{dp}{p^0},\cr
		e&=\frac{1}{c} \int_{\mathbb{R}^3}\int_0^\infty \left(U^\mu p_\mu\right)^2 F\left(1+\frac{\mathcal{I}}{mc^2}\right) \phi(\mathcal{I}) \,d\mathcal{I}\,\frac{dp}{p^0},\cr
		q^\mu&=c\int_{\mathbb{R}^3}\int_0^\infty p^\mu\left(U^\nu p_\nu\right) F\left(1+\frac{\mathcal{I}}{mc^2}\right) \phi(\mathcal{I}) \,d\mathcal{I}\,\frac{dp}{p^0},\cr
		&-\frac{1}{ c }U^\mu \int_{\mathbb{R}^3}\int_0^\infty \left(U^\nu p_\nu\right)^2 F\left(1+\frac{\mathcal{I}}{mc^2}\right) \phi(\mathcal{I}) \,d\mathcal{I}\,\frac{dp}{p^0}.
\end{split}\end{align}
The equilibrium distribution function $F_E$ of \eqref{PR} reads \cite{PR}:
\begin{equation}\label{GJdf} 
F_E=\exp\left\{-1+\frac{m}{k_B}\frac{ g_r}{T}-\left(1+\frac{\mathcal{I}}{mc^2}\right)\frac{1}{k_B T}U^\mu p_\mu \right\} .
\end{equation}
We note that \eqref{GJdf} reduces to the well known   J\"{u}ttner distribution function \cite{Juttner} in the  monatomic case. The equilibrium temperature $T$ is determined by the following nonlinear relation:
\begin{align}\label{gamma relation}\begin{split}
\widetilde{e}(T)\equiv\frac{\int_{\mathbb{R}^3}\int_0^\infty \sqrt{1+|p|^2}e^{-\left(mc^2+ \mathcal{I} \right)\frac{1}{k_B T}\sqrt{1+|p|^2}} \left(mc^2+ \mathcal{I} \right)\phi(\mathcal{I})\, d\mathcal{I}dp}{ \int_{\mathbb{R}^3}\int_0^\infty e^{-\left(mc^2+ \mathcal{I} \right)\frac{1}{k_B T}\sqrt{1+|p|^2}}\phi(\mathcal{I}) \,d\mathcal{I}dp}=\frac{e}{n},
\end{split}\end{align}
which is required for \eqref{PR} to satisfy the conservation laws. With such $T$, the relativistic chemical potential $g_r$ is defined by the following equation
\begin{equation*} 
e^{-1+\frac{m}{k_B}\frac{g_r}{T}}=\frac{n}{(mc)^3\int_{\mathbb{R}^3}\int_0^\infty  e^{-\left(mc^2+ \mathcal{I} \right)\frac{1}{k_B T}\sqrt{1+|p|^2}}\phi(\mathcal{I})\,d \mathcal{I}dp}.
\end{equation*}
Here $k_B$ is the Boltzmann constant. The uniqueness of the equilibrium temperature $T$ will be considered later (see Proposition \ref{solvability gamma}). Then, $F_E$ satisfies 
\begin{align}\label{conservation laws}\begin{split}
&U_\mu\int_{\mathbb{R}^3}\int_0^\infty p^\mu\left\{\left(1-p^\mu q_\mu \frac{1+\frac{\mathcal{I}}{mc^2}}{bmc^2}\right)F_E-F\right\}\phi(\mathcal{I})\,d\mathcal{I}\frac{dp}{p^0}=0,\cr
 &U_\mu\int_{\mathbb{R}^3}\int_0^\infty p^\mu p^\nu\left\{\left(1-p^\mu q_\mu \frac{1+\frac{\mathcal{I}}{mc^2}}{bmc^2}\right)F_E-F\right\}  \left(1+\frac{\mathcal{I}}{mc^2}\right)\phi(\mathcal{I})\,d\mathcal{I}\frac{dp}{p^0}=0,
\end{split}\end{align}
so that the following conservation laws for $V^\mu$ and $T^{\mu\nu}$ hold true
\begin{equation}\label{conservation law}
\partial_\mu V^\mu=0,\qquad \partial_\mu T^{\mu\nu}=0.
\end{equation}
Finally, the macroscopic quantity $b$ of \eqref{PR} is defined as
\begin{align}\label{b def} 
	b=\frac{n mc^2}{\gamma^2}\left(\int_0^\infty \frac{K_2(\gamma^*)}{\gamma^*}\phi(\mathcal{I})\, d\mathcal{I}\right)^{-1}\int_0^\infty K_3(\gamma^*)\phi(\mathcal{I})\,d\mathcal{I},
\end{align}
where $\gamma$ and $\gamma^*$ denote
$$ 
\gamma=\frac{mc^2}{k_BT}, \qquad\gamma^*=\gamma\left(1+\frac{\mathcal{I}}{mc^2}\right)
$$
and $K_n$ is the modified Bessel function of the second kind:
\begin{align*}
	K_{n}(\gamma)&=\int_{0}^{\infty}\cosh(nr)e^{-\gamma \cosh(r) }dr.
\end{align*}

\noindent\newline

\subsection{Main result}
 The aim of this paper is to study the global in time existence and asymptotic behavior of classical solutions to the Pennisi-Ruggeri model \eqref{PR} when the initial data starts sufficiently close to a global equilibrium.
For this, we decompose the solution $F$ around the global equilibrium:
	\begin{equation}\label{decomposition}
	F=F_E^0+f\sqrt{F_E^0}.
	\end{equation}
	Here $f$ is the perturbation part, and $F_E^0$ is the global equilibrium defined by
	\begin{equation*}
		F_E^0\equiv F_E(g_{r0},0,T_0;p)=\exp\left\{-1+\frac{m}{k_B}\frac{ g_{r0}}{T_0}-\left(1+\frac{\mathcal{I}}{mc^2}\right)\frac{cp^0}{k_B T_0}  \right\}
	\end{equation*}
where $g_{r0}$ and $T_0$ are positive constants. Inserting \eqref{decomposition}, then \eqref{PR} can be rewritten as
	\begin{align*}\label{AWRBGK2}
		\begin{split}
			\partial_t f+\hat{p}\cdot\nabla_x f&=\frac{1}{\tau}\left(L(f)+\Gamma (f)\right),\cr
			f_0(x,p)&=f(0,x,p)
	\end{split}\end{align*}
	where $L$ is the linearized operator and $\Gamma$ is the nonlinear perturbation whose definition can be found in \eqref{Pf}, Lemma \ref{lin2} and Proposition \ref{lin3}. The initial perturbation $f_0$ is given by $F_0=F_E^0+f_0\sqrt{F_E^0}.$ 	We now define the notations to state our main result.
%
	\begin{itemize}
		\item We define the weighted $L^2$ inner product:
		\begin{align*}
			\langle f,g\rangle_{L^2_{p,\mathcal{I}}}&=\int_{\mathbb{R}^3}\int_0^\infty f({p,\mathcal{I}})g({p,\mathcal{I}})\phi(\mathcal{I})\, d\mathcal{I}dp,\cr
			\langle f,g\rangle_{L^2_{x,p,\mathcal{I}}}&=\int_{\mathbb{R}^3}\int_{\mathbb{R}^3}\int_0^\infty f(x,{p,\mathcal{I}})g(x,{p,\mathcal{I}})\phi(\mathcal{I})\, d\mathcal{I}dpdx 
		\end{align*}
and the corresponding norms:
		\[\|f\|^{2}_{L^2_{p,\mathcal{I}}}=\int_{\mathbb{R}^{3}}\int_0^\infty|f({p,\mathcal{I}})|^2\phi(\mathcal{I})\,d\mathcal{I}dp,\quad\|f\|_{L^2_{x,p,\mathcal{I}}}^{2}=\int_{\mathbb{R}^3}\int_{\mathbb{R}^3}\int_0^\infty|f(x,{p,\mathcal{I}})|^{2}\phi(\mathcal{I})\,d\mathcal{I}dpdx.
		\]
		\item We define the operator $\Lambda^s$ $(s\in \mathbb{R})$ by
		$$
		\Lambda^s f(x)=\int_{\mathbb{R}^3}|\xi|^s\hat{f}(\xi)e^{2\pi ix\cdot\xi}\,d\xi
		$$ 
		where $\hat{f}$ is the Fourier transformation of $f$.
		\item We denote $\dot{H}^s_x$ to be the homogeneous space endowed with the norm:
		$$
		\|f\|_{\dot{H}^s_x}:=\|\Lambda^sf\|_{L^2_x}=\left\| |\xi|^s\hat{f}(\xi)\right\|_{L^2_{\xi}}
		$$
		where $\|\cdot\|_{L^2_x}$ and $\|\cdot\|_{L^2_\xi}$ are the usual $L^2$-norm.
		\item We use the notation $L^2_{p,\mathcal{I}}H^s_x$ to denote
		$$
		\|f\|_{L^2_{p,\mathcal{I}}H^s_x}=\left\|\|f\|_{H^s_x} \right\|_{L^2_{p,\mathcal{I}}}
		$$
		where $\|\cdot\|_{H^s_x}$ is the usual Sobolev norm. 
		\item We define the energy functional $E$ and dissipation rate $\mathcal{D}$ by
		\begin{align*}
			E(f)(t)&=\sum_{|\alpha|+|\beta|\le N}\| \partial^\alpha_\beta f\|^{2}_{L^2_{x,{p,\mathcal{I}}}},\cr
			\mathcal{D}(f)(t)&=\|\{I-P\}f\|^2_{L^2_{x,p,\mathcal{I}}}+\sum_{1\le |\alpha|+|\beta|\le N}\|\partial^\alpha_\beta f\|^2_{L^2_{x,p,\mathcal{I}}},
		\end{align*}
	where the multi-index $\alpha=[\alpha_0,\alpha_1,\alpha_2,\alpha_3]$ and $\beta=[\beta_1,\beta_2,\beta_3]$ are used to denote  
$$
\partial^\alpha_\beta=\partial^{\alpha_0}_{t}\partial^{\alpha_1}_{x^1}\partial^{\alpha_2}_{x^2}\partial^{\alpha_3}_{x^3}\partial^{\beta_1}_{p^1}\partial^{\beta_2}_{p^2}\partial^{\beta_3}_{p^3}.
$$
		We also define the energy functional and dissipation rate for spatial derivatives by
		\begin{align*}
			E_N(f)(t)&=\sum_{0\le k\le N}\| \nabla^k_xf\|^{2}_{L^2_{x,{p,\mathcal{I}}}},\cr
			\mathcal{D}_N(f)(t)&=\|\{I-P\}f\|^2_{L^2_{x,p,\mathcal{I}}}+\sum_{1\le k\le N}\|\nabla^k_x f\|^2_{L^2_{x,p,\mathcal{I}}}.
		\end{align*}
	\end{itemize}
	Then, our main result is as follows. 
%
\begin{theorem}\label{main3}
Let $N\ge 3$ be an integer. Assume that the state density $\phi$ satisfies
\begin{equation}\label{phi condition}
\int_{\mathbb{R}^3}\int_0^\infty\mathbb{P}(p^0,\mathcal{I}) e^{-C\left(1+\frac{\mathcal{I}}{mc^2}\right)  p^0}\phi(\mathcal{I})\,d\mathcal{I}dp < \infty
\end{equation}
for any positive constant $C$ and arbitrary polynomial $\mathbb{P}$ of $p^0$ and $\mathcal{I}$. Then, there exists a positive constant $\delta$ such that if $E(f_0)<\delta $,
 \eqref{PR} admits a unique global in time solution such that the energy functional is uniformly bounded:
	$$
E_N(f)(t)+\int_0^t \mathcal{D}_N(f)(s) ds\le CE_N(f_0).
	$$
If futher $f_0\in L^2_{p,\mathcal{I}}\dot{H}^{-s}_x$ for some $s\in [0,3/2)$, then 
\begin{enumerate}
	\item The negative Sobolev norm is uniformly bounded:
	$$
	\|\Lambda^{-s}f(t)\|_{L^2_{x,p,\mathcal{I}}}\le C_0.
	$$
	\item The solution converges to the global equilibrium with algebraic decay rate:
$$
\sum_{\ell\le k\le N}\|\nabla^k f(t)\|_{L^2_{x,p,\mathcal{I}}}\le C(1+t)^{-\frac{\ell+s}{2}}\quad \text{for}\ -s<\ell\le N-1.
$$
\item The microscopic part decays faster by $1/2$: 
	$$
\left\|\nabla^k\{I-P\}f(t)\right\|_{L^2_{x,p,\mathcal{I}}}\le C(1+t)^{-\frac{\ell+1+s}{2}}\quad \text{for}\ -s<\ell\le N-2.
$$
\end{enumerate}

\end{theorem}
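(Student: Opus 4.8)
The plan is to run the nonlinear energy method of Guo, augmented by the negative Sobolev space framework of Guo--Wang for the algebraic rates. The first task is to pin down the linearized operator $L$ and the macroscopic projection $P$ from \eqref{Pf}, Lemma \ref{lin2} and Proposition \ref{lin3}. Since the multiplier $\frac{U_\mu p^\mu}{cp^0}$ of \eqref{PR} equals $1$ at the global equilibrium (where $U^\mu=(c,0,0,0)$), inserting \eqref{decomposition} and expanding yields $Lf=\frac1\tau(Pf-f)$ up to the heat-flux correction, where $Pf$ is a finite linear combination of the weighted collision invariants $\sqrt{F_E^0},\ p^i\sqrt{F_E^0},\ (1+\mathcal I/mc^2)p^0\sqrt{F_E^0}$ with coefficients that are moments of $f$; condition \eqref{phi condition} is exactly what guarantees finiteness of these moments and of all their $p$-derivatives. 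The conservation laws \eqref{conservation laws} force $Pf$ and $f$ to share those moments, so $\mathrm{Ker}\,L=\mathrm{Ran}\,P$ is finite-dimensional and, being the linearization of the $H$-theorem of \cite{PR}, $L$ is non-positive with a spectral gap $\langle Lf,f\rangle_{L^2_{p,\mathcal I}}\le -\delta_0\|\{I-P\}f\|^2_{L^2_{p,\mathcal I}}$. From the same lemmas $\Gamma(f)$ is at least quadratic and carries Gaussian weights, so $\|\partial^\alpha_\beta\Gamma(f)\|_{L^2_{x,p,\mathcal I}}\lesssim\sqrt{E(f)}\,\sqrt{\mathcal D(f)}$ via $H^{N-1}_x\hookrightarrow L^\infty_x$ (this is where $N\ge3$ enters). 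One delicate point: the heat-flux correction $\big(1-p^\mu q_\mu\frac{1+\mathcal I/mc^2}{bmc^2}\big)$ must be checked not to spoil the self-adjointness and null space of $L$; its linearization is a moment of $f$ times a fixed function of $(p,\mathcal I)$ lying inside the macroscopic span, and compatibility with \eqref{conservation laws} keeps it there.

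Next I would derive a closed a priori inequality. Applying $\partial^\alpha_\beta$ to $\partial_t f+\hat p\cdot\nabla_x f=\tau^{-1}(Lf+\Gamma f)$ and pairing with $\partial^\alpha_\beta f$ in $L^2_{x,p,\mathcal I}$: the transport term is antisymmetric for $\beta=0$, while for $|\beta|\ge1$ the commutator $[\partial_\beta,\hat p\cdot\nabla_x]f$ is of lower order in $|\beta|$ and is absorbed by induction on $|\beta|$; the linear term, using the spectral gap and the fact that $\partial_\beta(-f/\tau)$ directly produces $\|\partial_\beta f\|^2$, yields the dissipation $-c\|\{I-P\}f\|^2-c\sum_{1\le|\alpha|+|\beta|\le N}\|\partial^\alpha_\beta f\|^2$, and the nonlinear term is bounded by $C\sqrt{E(f)}\,\mathcal D(f)$. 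This controls $\{I-P\}f$ and all genuine derivatives but not $Pf$. To recover $\|\nabla_x Pf\|$ one projects the equation onto the collision invariants, obtaining the linearized Eckart-frame fluid system for the components of $Pf$; this system is symmetric hyperbolic with a relaxation term and satisfies the Kawashima condition (no eigenvector of the streaming matrix lies in the kernel of the relaxation matrix), which here follows from \eqref{conservation laws} together with the strict monotonicity of $\widetilde{e}$ in \eqref{gamma relation} established in Proposition \ref{solvability gamma}. Constructing the associated interaction functional $\mathcal E_{\mathrm{int}}$ and adding a small multiple of it to $E$ produces
\[
\frac{d}{dt}\widetilde{E}(f)(t)+c\,\mathcal D(f)(t)\le C\sqrt{E(f)(t)}\,\mathcal D(f)(t),\qquad \widetilde{E}\sim E,
\]
with the identical estimate, restricted to pure spatial derivatives, for $E_N$ and $\mathcal D_N$.

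A local-in-time solution is built by a contraction mapping in the space with norm $\sup_{[0,T]}E(\cdot)$ using the Lipschitz bound on $\Gamma$, and the standard continuity argument applied to the inequality above, for $E(f_0)<\delta$ small, promotes it to a global solution with $E_N(f)(t)+\int_0^t\mathcal D_N(f)(s)\,ds\le CE_N(f_0)$. For the decay statements I would follow Guo--Wang: apply $\Lambda^{-s}$ to the equation and pair with $\Lambda^{-s}f$; transport is antisymmetric, the linear part is $\le-c\|\Lambda^{-s}\{I-P\}f\|^2\le0$, and the nonlinear part is bounded by $\|\Lambda^{-s}\Gamma(f)\|\,\|\Lambda^{-s}f\|\lesssim\big(E(f)+\mathcal D(f)\big)\|\Lambda^{-s}f\|$ using the negative-index product (Hardy--Littlewood--Sobolev) estimate, valid precisely for $0\le s<3/2$. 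Integrating in time against $\int_0^\infty\mathcal D_N<\infty$ yields $\|\Lambda^{-s}f(t)\|_{L^2_{x,p,\mathcal I}}\le C_0$, which is assertion (1).

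Assertions (2) and (3) come out of Sobolev interpolation in the Guo--Wang manner: interpolating between $\dot{H}^{-s}_x$ and $\dot{H}^{k}_x$ converts $\frac{d}{dt}\widetilde{E}_N+c\,\mathcal D_N\le0$ into a differential inequality $\frac{d}{dt}\mathcal E^{(\ell)}+C\big(\mathcal E^{(\ell)}\big)^{1+\frac1{\ell+s}}\le0$ for the energy $\mathcal E^{(\ell)}=\sum_{\ell\le k\le N}\|\nabla^k f\|^2$ of the higher derivatives, hence $\mathcal E^{(\ell)}(t)\le C(1+t)^{-(\ell+s)}$, i.e. $\sum_{\ell\le k\le N}\|\nabla^k f(t)\|\le C(1+t)^{-(\ell+s)/2}$ for $-s<\ell\le N-1$; since $\mathcal D_N$ dominates $\|\nabla^k\{I-P\}f\|^2$ together with one extra spatial derivative of $Pf$, feeding the rate just obtained for $\|\nabla^{\ell+1}f\|$ back into the $\{I-P\}f$-equation gains the additional factor $(1+t)^{-1/2}$ of (3). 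I expect the main obstacle to be the macroscopic dissipation estimate of the second step --- correctly identifying the linearized fluid system for this Eckart-frame model with heat-flux correction and verifying the Kawashima/non-degeneracy condition so that $\|\nabla_x Pf\|$ is genuinely dissipated --- together with the related check that the correction factor in $F_E$ preserves the symmetry and kernel of $L$ on which the whole scheme rests.
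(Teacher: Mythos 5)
Your proposal follows essentially the same route as the paper: Guo's nonlinear energy method with a micro–macro decomposition to recover dissipation of $Pf$, combined with the Guo--Wang negative Sobolev framework for the algebraic decay, and you correctly flag as the two delicate points (a) compatibility of the heat-flux correction with the structure of $L$ and (b) the macroscopic dissipation estimate. A few remarks on where your sketch needs tightening. First, for the polyatomic model the spatial collision invariants are weighted by the internal energy: the macroscopic span is $\{\sqrt{F_E^0},\ (1+\mathcal I/mc^2)p^\mu\sqrt{F_E^0}\}$, i.e.\ the $p^i$-components must also carry the factor $(1+\mathcal I/mc^2)$, not only the $p^0$-component, because energy–momentum conservation in \eqref{conservation laws} tests against $(1+\mathcal I/mc^2)p^\mu$; with your stated basis the relation $\mathrm{Ker}\,L=\mathrm{Ran}\,P$ would fail. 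Second, the spectral gap is not an output of an H-theorem linearization: since $L=\tau^{-1}(P-I)$ with $P$ an orthogonal projection, $\langle Lf,f\rangle=-\tau^{-1}\|\{I-P\}f\|^2$ is automatic (Proposition~\ref{pro}). Third, where you invoke the Kawashima compensating-function formalism the paper builds the interaction functional directly from the micro–macro system (Lemma~\ref{M-M}, Lemma~\ref{I-P}); these are the same device in different clothing, so this is a difference of packaging, not substance.

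Two points in your plan that are only gestured at but carry real content in the paper: the linearization of the heat-flux factor $\bigl(1-p^\mu q_\mu\tfrac{1+\mathcal I/mc^2}{bmc^2}\bigr)$ lands inside the macroscopic span only because (i) the time component $q^0$ is purely quadratic in $f$ (so it is absorbed into $\Gamma$), and (ii) the identity $mc^2 b_0/(k_B T_0)=\widetilde{e}(T_0)+k_BT_0$ of Lemma~\ref{computation F_E^0}(5) makes the linear part of the spatial heat-flux term recombine with the $U\cdot p$ contribution to reproduce the orthogonal projection $P(f)$ exactly. And before any of this can be set up one must know that $T$ is uniquely determined from $\widetilde{e}(T)=e/n$; the paper proves this via the variance identity \eqref{e5} showing $\widetilde{e}'(T)>0$ (Proposition~\ref{solvability gamma}). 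You mention this monotonicity only as an ingredient of your Kawashima check, but it is a prerequisite for the linearization itself and deserves to appear as a separate step.
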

\begin{remark}\label{choices}
The choice of state density $\phi(\mathcal{I})$ to guarantee the correct classical limit is not unique. For example, the following choices of $ \phi(\mathcal{I})$ 
	$$
	\mathcal{I}^{(f^i-2)/2}\qquad \text{or}\qquad e^{-b\frac{\mathcal{I}}{c^2}}\left(1+\frac{\mathcal{I}}{mc^2}\right)^{r}\mathcal{I}^{(f^i-2)/2}\quad\text{with}\quad b\ge0,\ r>0
	$$ 
lead to the correct classical limit (See \cite{PR3}). 
\end{remark} 

Unlike the classical BGK models \cite{BGK,Holway}, the equilibrium temperature of \eqref{PR} is determined through the nonlinear relation \eqref{gamma relation}
due to the relativistic nature of the equilibrium distribution function $F_E$. For rigorous analysis, therefore, it must be first analyzed whether or not the relation \eqref{gamma relation} provides the unique equilibrium temperature as a moment of the solution $F$. That is, any existence problem for \eqref{PR} must be understood as the problem of solving the coupled system of \eqref{PR} and \eqref{gamma relation}. In the case of relativistic models for a monatomic gas, such solvability problem was addressed in \cite{BCNS} for the Marle model, and \cite{HY2} for the Anderson-Witting model. In \cite{BCNS,HY2}, there is clever manipulations of the modified Bessel functions of the second kind that was crucially used to show the monotonicity property of $\widetilde{e}$, and it plays an important role in proving the one-to-one correspondence between $T$ and $e/n$. However, in the case of a polyatomic gas, similar line of argument using the modified Bessel functions does not work due to the presence of the state density of the internal mode $\phi(\mathcal{I})$ which can takes various forms (See Remark \eqref{choices}). 
In view of these difficulties,  we derive the following identity 
	\begin{equation*}
\left\{\widetilde{e}\right\}^{\prime}(T) =\frac{1}{k_B nT^2}\int_{\mathbb{R}^3}\int_0^\infty \left\{cp^0\Big(1+\frac{\mathcal{I}}{mc^2}\Big)-\frac{e}{n}\right\}^2F_E(g_r,0,T)\phi(\mathcal{I})\,d\mathcal{I}dp
	\end{equation*}
to investigate the monotonicity property of $\widetilde{e}$ in a different way (See Proposition \ref{solvability gamma}). Since the number density $n$ and the energy $e$ are strictly positive for sufficiently small $E(f)(t)$, the above relation implies that  $\widetilde{e}(T)$ is strictly increasing on $T\in (0,\infty)$, which enables us to solve the solvability problem of $T$. We mention that since the relativistic BGK model \eqref{PR} does not guarantee the positivity of solutions, the smallness condition of $E(f)(t)$ was required to preserve the sign of $n$ and $e$.

\noindent\newline

\subsection{Brief history} The mathematical research on the relativistic BGK model was initiated in 2012 by Bellouquid et al \cite{BCNS} for the Marle model, where the unique determination of equilibrium variables, asymptotic limits and linearization problem were addressed. Afterward, Bellouquid et al \cite{BNU} proved the existence and asymptotic behavior of solutions for the Marle model when the initial data starts close to the global equilibrium. Recently, Hwang and Yun \cite{HY1} established the existence and uniqueness of stationary solutions to the boundary value problem for the Marle model in a finite interval. The weak solutions were covered by Calvo et al \cite{CJS}. In the case of the Anderson-Witting model, the unique determination of equilibrium variables, and the existence and asymptotic behavior of near-equilibrium solutions were addressed in \cite{HY2}. The unique existence of stationary solutions to the Anderson-Witting model in a slab was studied in \cite{HY3}. 
		
For the relativistic Boltzmann equation, much more have been established. We refer to \cite{B,D,DE} for the local existence and linearized solution, \cite{GS1,GS2,Guo Strain Momentum,Strain1,Strain Zhu} for the global existence and asymptotic behavior of near-equilibrium solutions, and \cite{Dud3,Jiang1,Jiang2} for the existence with large data. The spatially homogeneous case was addressed in \cite{LR,Strain Yun}. The regularizing effect of the collision operator has been studied in \cite{A,JY,W}. The propagation of the uniform upper bound was established in \cite{JSY}. We refer to \cite{Cal,Strain2} for the Newtonian limit and \cite{SS} for the hydrodynamic limit. For the results on the relativistic theories of continuum for rarefied gases and its connections with the kinetic theory see for example  \cite{LMR,CPR1,CPR2,PR5,PR6,RXZ}.

	
	\bigskip

	This paper is organized as follows. In Section 2, the unique determination of the equilibrium variable $T$ is discussed. In Section 3, we study the linearization of the relativistic BGK model \eqref{PR}. In Section 4, we provide estimates for the macroscopic fields and nonlinear perturbation. Section 5 is devoted to the proof of Theorem \ref{main3}.

\noindent\newline
 \section{Unique determination of the equilibrium temperature $T$}
We recall from \eqref{gamma relation} that $T$ is determined through the following relation
\begin{align*}
\widetilde{e}(T)=\frac{\int_{\mathbb{R}^3}\int_0^\infty \sqrt{1+|p|^2}e^{-\left(mc^2+ \mathcal{I} \right)\frac{1}{k_B T}\sqrt{1+|p|^2}} \left(mc^2+ \mathcal{I} \right)\phi(\mathcal{I})\, d\mathcal{I}dp}{ \int_{\mathbb{R}^3}\int_0^\infty e^{-\left(mc^2+ \mathcal{I} \right)\frac{1}{k_B T}\sqrt{1+|p|^2}}\phi(\mathcal{I}) \,d\mathcal{I}dp}=\frac{e}{n}.
\end{align*}
In this section, formal calculations are first presented to show that the relativistic BGK model \eqref{PR} satisfies the conservation laws \eqref{conservation law} if the above relation admits a unique $T$. And then we prove that when $E(f)(t)$ is small enough, $T$ indeed can be uniquely determined. The following lemma will be used later to simplify the integral of $F_E$.
\begin{lemma}\cite{Strain2}\label{rest frame}
	For $U^\mu=(\sqrt{c^2+|U|^2},U)$, define $\Lambda$ by
	\begin{align*}
		\Lambda=
		\begin{bmatrix}
			c^{-1}U^0 & -c^{-1}U^1 & -c^{-1}U^2 & -c^{-1}U^3 \cr
			-U^1&  1+(U^0-1)\frac{(U^1)^2}{|U|^2}&(U^0-1)\frac{U^1U^2}{|U|^2}  &(U^0-1)\frac{U^1U^3}{|U|^2}  \cr
			-U^2& (U^0-1)\frac{U^1U^2}{|U|^2} &  1+(U^0-1)\frac{(U^2)^2 }{|U|^2}&(U^0-1)\frac{U^2U^3}{|U|^2}  \cr
			-U^3&  (U^0-1)\frac{U^1U^3}{|U|^2}& (U^0-1)\frac{U^2U^3}{|U|^2} &  1+(U^0-1)\frac{(U^3)^2}{|U|^2}
		\end{bmatrix}.
	\end{align*}
	Then $\Lambda$ transforms $U^\mu$ into the local rest frame $(c,0,0,0).$
\end{lemma}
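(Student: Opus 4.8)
The assertion is a purely algebraic identity about a $4\times4$ matrix, so the plan is simply to apply $\Lambda$ to the column vector $U^\mu=(U^0,U^1,U^2,U^3)$ and verify that the result is $(c,0,0,0)$. The only input I would need is the mass-shell relation built into the definition of $U^\mu$,
\[
(U^0)^2=c^2+|U|^2,\qquad\text{equivalently}\qquad (U^0)^2-\sum_{i=1}^3(U^i)^2=c^2 .
\]

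Reading off the first row of $\Lambda$, the time component of the product is
\[
(\Lambda U)^0=c^{-1}U^0\cdot U^0-c^{-1}\sum_{i=1}^3U^i\cdot U^i=c^{-1}\bigl((U^0)^2-|U|^2\bigr)=c^{-1}c^2=c .
\]
For a spatial index $i\in\{1,2,3\}$, the $i$-th row of $\Lambda$ carries the entry $-U^i$ in column $0$ and $\delta_{ij}+(U^0-1)U^iU^j/|U|^2$ in column $j$ $(j=1,2,3)$; since $\sum_{j=1}^3(U^j)^2=|U|^2$, the spatial part of the product collapses to $U^i+(U^0-1)U^i=U^0U^i$, and therefore
\[
(\Lambda U)^i=-U^iU^0+U^0U^i=0 .
\]
Hence $\Lambda U^\mu=(c,0,0,0)$, which is exactly the claim.

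I do not expect a genuine obstacle here: the whole content is the one-line cancellation above together with the mass-shell constraint. There are only two minor points of care. First, the spatial block of $\Lambda$ carries $|U|^2$ in its denominators, but this is a removable singularity — when $U=0$ one has $U^0=c$, the ratios are read off by continuity, $\Lambda$ reduces to the identity matrix, and $\Lambda U^\mu=(c,0,0,0)$ holds trivially. Second, although only the action of $\Lambda$ on $U^\mu$ is asserted, it is worth recording — and follows from the same direct computation, or may be quoted from \cite{Strain2} — that $\Lambda$ is precisely the inverse of the pure Lorentz boost taking the rest four-velocity $(c,0,0,0)$ to $U^\mu$; in particular $\Lambda$ is a proper orthochronous Lorentz transformation with unit Jacobian that preserves the Minkowski inner product, which is the feature that later justifies the change of variables $p\mapsto\Lambda p$ used to reduce the momentum integrals of $F_E$ to the local rest frame.
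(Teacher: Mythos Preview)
Your proof is correct and follows exactly the same approach as the paper's: both simply carry out the matrix--vector product $\Lambda U^\mu$ componentwise, using the relation $(U^0)^2-|U|^2=c^2$ for the time component and the cancellation $-U^0U^i+U^i+(U^0-1)U^i=0$ for each spatial component. Your additional remarks on the removable singularity at $U=0$ and on $\Lambda$ being a proper Lorentz transformation (which the paper defers to \cite{Strain2}) are welcome but not essential for the statement as phrased.
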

\begin{proof}
	The proof that $\Lambda$ is the Lorentz transformation can be found in \cite{Strain2}. The identity $\Lambda U^\mu=(c,0,0,0)$ can be verified by an explicit computation:
	\begin{align*}
		\Lambda U^\mu&=
		\begin{bmatrix}
			c^{-1}(U^0)^2-c^{-1}(U^1)^2-c^{-1}(U^2)^2-c^{-1}(U^3)^2\cr
			-U^0U^1+U^1+\frac{(U^0-1)U^1}{|U|^2}|U|^2\cr
			-U^0U^2+U^2+\frac{(U^0-1)U^2}{|U|^2}|U|^2\cr
			-U^0U^3+U^3+\frac{(U^0-1)U^3}{|U|^2}|U|^2
		\end{bmatrix}=\begin{bmatrix}
			c^{-1}\left(c^2+|U|^2-|U|^2\right)\cr
			-U^0U^1+U^1+(U^0-1)U^1 \cr
			-U^0U^2+U^2+(U^0-1)U^2 \cr
			-U^0U^3+U^3+(U^0-1)U^3
		\end{bmatrix}=\begin{bmatrix}
			c\cr
			0 \cr
			0 \cr
			0
		\end{bmatrix}.
	\end{align*}
	\end{proof}

\begin{lemma}\label{explicit GM}

Assume \eqref{gamma relation} admits a unique $T$, then $F_E$ satisfies \eqref{conservation laws}.
\end{lemma}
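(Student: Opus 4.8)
The plan is to verify that $F_E$, with $T$ chosen as the (unique) solution of \eqref{gamma relation} and with $g_r$ chosen to match the number density $n$, makes both integral identities in \eqref{conservation laws} hold. The key simplification is that the quantity in braces, $\left(1-p^\mu q_\mu \frac{1+\frac{\mathcal{I}}{mc^2}}{bmc^2}\right)F_E-F$, gets contracted with $U_\mu p^\mu$ (resp. $U_\mu p^\mu p^\nu$) and integrated. First I would split the left-hand side into two pieces: the $F$-terms, which by the definitions \eqref{VT}, \eqref{Eckart}, \eqref{macroscopic fields} produce exactly $U_\mu V^\mu/(mc)=nc$ and $U_\mu T^{\mu\nu}/(mc)\cdot(\text{const})$ expressed through $n$, $e$, $q^\mu$; and the $F_E$-terms, which I would evaluate explicitly by passing to the local rest frame via the Lorentz transformation $\Lambda$ of Lemma \ref{rest frame}. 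In the rest frame $U^\mu=(c,0,0,0)$, so $U^\mu p_\mu = c p^0$, the exponent in $F_E$ becomes isotropic in $p$, and the integrals reduce to the scalar quantities appearing in \eqref{gamma relation} and in the definition of $b$ in \eqref{b def} (this is where the modified Bessel functions $K_2,K_3$ enter).

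The heart of the computation is to show: (i) $\displaystyle U_\mu\int p^\mu\left(1-p^\mu q_\mu \tfrac{1+\mathcal{I}/mc^2}{bmc^2}\right)F_E\,\phi\,d\mathcal{I}\tfrac{dp}{p^0}$ equals the corresponding moment of $F$, namely $U_\mu V^\mu/(mc)$; and (ii) the analogous identity for the $p^\mu p^\nu$ moment weighted by $1+\mathcal{I}/mc^2$. For (i), in the rest frame the term linear in $q_\mu$ contributes a factor $\int p^\mu p^i q_i (\cdots)$ which vanishes by oddness in $p$ when $\mu$ is a spatial index and, when $\mu=0$, is proportional to $U_\mu q^\mu=0$ by the Eckart constraint; what remains is $\int cp^0 F_E\,\phi\,d\mathcal{I}\,\tfrac{dp}{p^0} = \int F_E\,\phi\,d\mathcal{I}\,dp$, which by the definition of $g_r$ equals $n$ — so the $\nu$-free identity says $n=n$. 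For (ii), contracting with $U_\mu$ gives $U_\mu U_\nu$-, $U_\mu h_{\alpha}^{\ \nu}$-type projections of the $F_E$ energy-momentum tensor; the isotropy of $F_E$ in the rest frame forces its heat-flux and shear parts to vanish, so $T_E^{\mu\nu}=p_E h^{\mu\nu}+\frac{e_E}{c^2}U^\mu U^\nu$, and matching the $U^\mu U^\nu$ component is exactly the statement $\widetilde{e}(T)=e/n$, i.e. \eqref{gamma relation}; the correction term proportional to $q_\mu/(bmc^2)$ is precisely engineered (this is the Pennisi--Ruggeri modification) so that the heat-flux components also match, and here one uses the explicit value of $b$ from \eqref{b def} to see the coefficients cancel.

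The main obstacle I anticipate is bookkeeping the tensorial structure cleanly: one must carefully track which contractions survive after using the four constraints $U_\alpha U^\alpha=c^2$, $U_\alpha q^\alpha=0$, $U_\alpha\sigma^{\langle\alpha\beta\rangle}=0$, $g_{\alpha\beta}\sigma^{\langle\alpha\beta\rangle}=0$, and verify that the $b$-dependent term in \eqref{PR} exactly supplies the missing heat-flux contribution rather than over- or under-correcting. A secondary technical point is justifying that all the $F_E$-integrals converge and that differentiation/rest-frame change of variables is legitimate — but this follows from the Gaussian-type decay of $F_E$ together with the hypothesis \eqref{phi condition} on $\phi$. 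Once the two identities in \eqref{conservation laws} are established, \eqref{conservation law} follows immediately by integrating \eqref{PR} against $p^\mu\phi\,d\mathcal{I}\,dp/p^0$ and $p^\mu p^\nu(1+\mathcal{I}/mc^2)\phi\,d\mathcal{I}\,dp/p^0$ and using that the streaming term is a spacetime divergence.
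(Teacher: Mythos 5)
Your proposal is correct and follows essentially the same route as the paper: contract \eqref{conservation laws} with $U_\mu$, pass to the local rest frame via the Lorentz map $\Lambda$ of Lemma~\ref{rest frame} to evaluate the $F_E$-moments, invoke the Eckart orthogonality $U_\mu q^\mu=0$ and $U_\mu h^{\mu\nu}=0$ to kill the zeroth-moment correction, and use the third-moment decomposition of $F_E$ together with the value of $b$ in \eqref{b def} to show the $q$-correction reproduces $q^\nu$ exactly. The one place you are vaguer than the paper is the final step: the paper invokes the explicit third-moment identity $\int p^\mu p^\nu p^\alpha F_E(1+\mathcal{I}/mc^2)^2\phi\,d\mathcal{I}\,dp/p^0=\tfrac{m}{c}\{aU^\alpha U^\mu U^\nu+b(h^{\alpha\mu}U^\nu+h^{\alpha\nu}U^\mu+h^{\mu\nu}U^\alpha)\}$ from \cite{PR}, and you should make that (or an equivalent rest-frame computation with $K_2,K_3$) explicit rather than asserting the cancellation is ``precisely engineered.''
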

\begin{remark}
	The solvability problem of $T$ in \eqref{gamma relation} is addressed in Proposition \ref{solvability gamma}.
\end{remark}
\begin{proof}
	 We write \eqref{conservation laws} in terms of the macroscopic fields using the Eckart frame \eqref{Eckart}: 
	\begin{align}\label{det}\begin{split}
			\frac{1}{c}U_\mu\int_{\mathbb{R}^3}\int_0^\infty p^\mu\left(1-p^\alpha q_\alpha \frac{1+\frac{\mathcal{I}}{mc^2}}{bmc^2}\right)F_E   \phi(\mathcal{I}) \,d\mathcal{I}\frac{dp}{p^0}=\frac{1}{mc^2}U_\mu V^\mu&=n,\cr
			cU_\mu\int_{\mathbb{R}^3}\int_0^\infty p^\mu p^\nu \left(1-p^\alpha q_\alpha \frac{1+\frac{\mathcal{I}}{mc^2}}{bmc^2}\right)F_E \Big(1+\frac{\mathcal{I}}{mc^2}\Big) \phi(\mathcal{I})\,d\mathcal{I}\frac{dp}{p^0}=U_\mu T^{\mu\nu}&=q^\nu+eU^\nu.
	\end{split}\end{align}
By the change of variables $P^\mu:=\Lambda p^\mu$ using Lemma \ref{rest frame}, we have
	\begin{eqnarray}\label{F_E1}\begin{split}
			&\int_{\mathbb{R}^3}\int_0^\infty p^\mu F_E   \phi(\mathcal{I})\,d\mathcal{I}\frac{dp}{p^0}\cr
			&=e^{-1+\frac{m}{k_B}\frac{ g_r}{T} }\int_{\mathbb{R}^3}\int_0^\infty \left(\Lambda^{-1}P^\mu\right) 	e^{-\left(1+\frac{\mathcal{I}}{mc^2}\right)\frac{1}{k_B T} \left(\Lambda U^\mu\right)\left( \Lambda p_\mu\right)}  \phi(\mathcal{I}) \,d\mathcal{I}\frac{dP}{P^0}\cr
			&=e^{-1+\frac{m}{k_B}\frac{ g_r}{T} }\Lambda^{-1}\left(\int_{\mathbb{R}^3}\int_0^\infty e^{-\left(1+\frac{\mathcal{I}}{mc^2}\right)\frac{cP^0}{k_B T}}\phi(\mathcal{I}) \,d\mathcal{I}dP,0,0,0\right)\cr
			&=\frac{1}{c}e^{-1+\frac{m}{k_B}\frac{ g_r}{T} }\left\{\int_{\mathbb{R}^3}\int_0^\infty e^{-\left(1+\frac{\mathcal{I}}{mc^2}\right)\frac{cP^0}{k_B T}}\phi(\mathcal{I}) \,d\mathcal{I}dP\right\}\Lambda^{-1}\left(c,0,0,0\right) \cr
			&=\frac{1}{c}e^{-1+\frac{m}{k_B}\frac{ g_r}{T} }\left\{\int_{\mathbb{R}^3}\int_0^\infty e^{-\left(1+\frac{\mathcal{I}}{mc^2}\right)\frac{cp^0}{k_B T}}\phi(\mathcal{I}) \,d\mathcal{I}dp \right\}U^\mu
	\end{split}\end{eqnarray}
	and
	\begin{align}\label{F_E2}\begin{split}
			&U_{\mu}\int_{\mathbb{R}^3}\int_0^\infty p^\mu p^\nu F_E\Big(1+\frac{\mathcal{I}}{mc^2}\Big)   \phi(\mathcal{I})\,d\mathcal{I}\frac{dp}{p^0}\cr
			&= e^{-1+\frac{m}{k_B}\frac{ g_r}{T} }\int_{\mathbb{R}^3}\int_0^\infty \big\{(\Lambda U_{\mu}) (\Lambda p^\mu)\big\} (\Lambda^{-1}P^\nu) e^{-\left(1+\frac{\mathcal{I}}{mc^2}\right)\frac{1}{k_B T} \left(\Lambda U^\mu\right)\left( \Lambda p_\mu\right)} \Big(1+\frac{\mathcal{I}}{mc^2}\Big) \phi(\mathcal{I}) \,d\mathcal{I}\frac{dP}{P^0}\cr
			&= ce^{-1+\frac{m}{k_B}\frac{ g_r}{T} }\Lambda^{-1}\left( \int_{\mathbb{R}^3}\int_0^\infty P^0e^{-\left(1+\frac{\mathcal{I}}{mc^2}\right)\frac{cP^0}{k_B T}}\Big(1+\frac{\mathcal{I}}{mc^2}\Big)   \phi(\mathcal{I}) \,d\mathcal{I}dP,0,0,0\right)\cr
			&= e^{-1+\frac{m}{k_B}\frac{ g_r}{T} }\left\{\int_{\mathbb{R}^3}\int_0^\infty p^0 e^{-\left(1+\frac{\mathcal{I}}{mc^2}\right)\frac{cp^0}{k_B T}}\Big(1+\frac{\mathcal{I}}{mc^2}\Big) \phi(\mathcal{I})\, d\mathcal{I}dp\right\}U^\nu 
	\end{split}\end{align}
	where we used the fact that (1) Lorentz inner product and the volume element $dp/p^0$ are invariant under $\Lambda$, and (2) $P^\mu$ takes the form of $(\sqrt{(mc)^2+|P|^2},P).$ 	Then, it follows from \eqref{F_E2} and the decomposition of third moment \cite{PR}:
	\begin{align*}
		\int_{\mathbb{R}^3}\int_0^\infty p^\mu   p^\nu p^\alpha F_E  \Big(1+\frac{\mathcal{I}}{mc^2}\Big)^2  \phi(\mathcal{I}) \,d\mathcal{I}\frac{dp}{p^0}=\frac{m}{c}\left\{aU^\alpha U^\mu U^\nu+b\left(h^{\alpha\mu}U^\nu+h^{\alpha\nu}U^\mu+h^{\mu\nu}U^\alpha\right)\right\}
	\end{align*}
	that the second terms on the l.h.s of \eqref{det} are calculated respectively as follows
	\begin{align}\label{second1}\begin{split}
			&-\frac{1}{c}U_\mu\int_{\mathbb{R}^3}\int_0^\infty p^\mu p^\alpha q_\alpha \frac{1+\frac{\mathcal{I}}{mc^2}}{bmc^2}F_E   \phi(\mathcal{I}) \,d\mathcal{I}\frac{dp}{p^0} \cr
			&=-\frac{1}{bmc^3} q_\alpha \int_{\mathbb{R}^3}\int_0^\infty p^\alpha U_\mu p^\mu   \Big(1+\frac{\mathcal{I}}{mc^2}\Big) F_E   \phi(\mathcal{I}) \,d\mathcal{I}\frac{dp}{p^0} \cr
			&=-\frac{1}{bmc^3} e^{-1+\frac{m}{k_B}\frac{ g_r}{T} }\left\{\int_{\mathbb{R}^3}\int_0^\infty p^0 e^{-\left(1+\frac{\mathcal{I}}{mc^2}\right)\frac{cp^0}{k_B T}}\Big(1+\frac{\mathcal{I}}{mc^2}\Big) \phi(\mathcal{I})\, d\mathcal{I}dp\right\} q_\alpha U^\alpha\cr
			&=0
	\end{split}\end{align}
	and
	\begin{align}\label{second2}\begin{split}
			&-cU_\mu\int_{\mathbb{R}^3}\int_0^\infty p^\mu p^\nu p^\alpha q_\alpha \frac{1+\frac{\mathcal{I}}{mc^2}}{bmc^2}F_E \Big(1+\frac{\mathcal{I}}{mc^2}\Big) \phi(\mathcal{I})\,d\mathcal{I}\frac{dp}{p^0}\cr
			&=-\frac{1}{bc^2} U_\mu q_\alpha\left\{aU^\alpha U^\mu U^\nu+b\left(h^{\alpha\mu}U^\nu+h^{\alpha\nu}U^\mu+h^{\mu\nu}U^\alpha\right)\right\}\cr 
			&=-\frac{1}{c^2} U_\mu q_\alpha \Big( h^{\alpha\nu}U^\mu\Big)\cr 
			&=q^\nu.
	\end{split}\end{align}
	Here we used the fact that (1) $U_\mu U^\mu=c^2$, and (2) $h^{\alpha\mu}, h^{\mu\nu}$ and $q^\mu$ are orthogonal to $U_\mu$ in the following sense
	$$
	U_\mu h^{\mu\nu}= U_\mu\Big(-g^{\mu\nu}+\frac{1}{c^2}U^\mu U^\nu\Big)=-U^\nu+U^\mu=0,\qquad U_\mu q^\mu=-U_\mu h^\mu_\alpha U_\beta T^{\alpha\beta}=0.
	$$
Finally, we go back to \eqref{det} with \eqref{F_E1}--\eqref{second2} to obtain
	\begin{align*} 
			&e^{-1+\frac{m}{k_B}\frac{ g_r}{T}}\int_{\mathbb{R}^3}\int_0^\infty e^{-\left(1+\frac{\mathcal{I}}{mc^2}\right)\frac{cp^0}{k_B T}}\phi(\mathcal{I}) \,d\mathcal{I}dp=n,\cr
			& ce^{-1+\frac{m}{k_B}\frac{ g_r}{T}}\left\{\int_{\mathbb{R}^3}\int_0^\infty p^0e^{-\left(1+\frac{\mathcal{I}}{mc^2}\right)\frac{cp^0}{k_B T}} \Big(1+\frac{\mathcal{I}}{mc^2}\Big)\phi(\mathcal{I})\, d\mathcal{I}dp\right\}U^\nu=eU^\nu.
	\end{align*}
Using the change of variables $\frac{p}{mc}\rightarrow p$, we get
\begin{align*}
\frac{e}{n}&=\frac{\int_{\mathbb{R}^3}\int_0^\infty \sqrt{1+|p|^2}e^{-\left(mc^2+ \mathcal{I} \right)\frac{1}{k_B T}\sqrt{1+|p|^2}} \left(mc^2+ \mathcal{I} \right)\phi(\mathcal{I})\, d\mathcal{I}dp}{ \int_{\mathbb{R}^3}\int_0^\infty e^{-\left(mc^2+ \mathcal{I} \right)\frac{1}{k_B T}\sqrt{1+|p|^2}}\phi(\mathcal{I}) \,d\mathcal{I}dp},\cr
e^{-1+\frac{m}{k_B}\frac{ g_r}{T}}&=\frac{n}{(mc)^3\int_{\mathbb{R}^3}\int_0^\infty e^{-\left(mc^2+ \mathcal{I} \right)\frac{1}{k_B T}\sqrt{1+|p|^2}}\phi(\mathcal{I}) \,d\mathcal{I}dp}
\end{align*}
which gives the desired result.
\end{proof}
The following lemma provides information about the ranges of $n$ and $e/n$ when $E(f)(t)$ is small enough.
\begin{lemma}\label{n positive}
	Suppose $E(f)(t)$ is sufficiently small. Then we have
	$$ |n-1|+\left|\frac{e}{n}-\widetilde{e}(T_0)\right|\le C\sqrt{E(f)(t)}.$$
\end{lemma}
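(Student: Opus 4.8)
The plan is to insert the decomposition \eqref{decomposition}, $F=F_E^0+f\sqrt{F_E^0}$, into the definitions \eqref{macroscopic fields} of $n$, $U^\mu$ and $e$, and to expand each field around the value it takes when $f\equiv0$. First I would pin down those equilibrium values: since $F_E^0$ is the Maxwell--J\"uttner state with vanishing four-velocity, the change of variables used in the proof of Lemma \ref{explicit GM} (now with $U^\mu=(c,0,0,0)$, so that $\Lambda=\mathrm{Id}$) together with the scaling $p/(mc)\mapsto p$ shows that, at $f\equiv0$, one has $n=1$ (this normalization is exactly what fixes the constant $g_{r0}$), $U^\mu=(c,0,0,0)$, and, by \eqref{gamma relation}, $e=n\,\widetilde{e}(T_0)=\widetilde{e}(T_0)$.

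The workhorse estimate concerns the linear-in-$f$ part of a moment carrying a weight $\psi(p,\mathcal{I})$ of at most polynomial growth: by the Cauchy--Schwarz inequality in $L^2_{p,\mathcal{I}}$,
\[
\left|\int_{\mathbb{R}^3}\int_0^\infty \psi\, f\sqrt{F_E^0}\,\phi\,d\mathcal{I}\,dp\right|\le\left(\int_{\mathbb{R}^3}\int_0^\infty \psi^2 F_E^0\,\phi\,d\mathcal{I}\,dp\right)^{1/2}\,\|f(x,\cdot)\|_{L^2_{p,\mathcal{I}}},
\]
and the first factor is finite by the assumption \eqref{phi condition} (used with $C=c/(k_BT_0)$ and $\mathbb{P}$ proportional to $\psi^2$), since $F_E^0=e^{-1+\frac{m}{k_B}\frac{g_{r0}}{T_0}}e^{-(1+\frac{\mathcal{I}}{mc^2})\frac{cp^0}{k_BT_0}}$. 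Taking the supremum over $x$, and combining Minkowski's integral inequality with the Sobolev embedding $H^N_x\hookrightarrow L^\infty_x$ (valid as $N\ge3$), one obtains $\sup_x\|f(x,\cdot)\|_{L^2_{p,\mathcal{I}}}\le C\|f\|_{L^2_{p,\mathcal{I}}H^N_x}\le C\sqrt{E(f)(t)}$, so every such linear correction is $O(\sqrt{E(f)(t)})$ uniformly in $x$; the same $L^\infty_x$ bound gives $\int_{\mathbb{R}^3}\int_0^\infty Q(p^0,\mathcal{I})\,|F|\,\phi\,d\mathcal{I}\,\frac{dp}{p^0}\le C$ for any fixed polynomial $Q$ once $E(f)(t)$ is bounded.

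The one point that needs care is the nested structure of \eqref{macroscopic fields}: $U^\mu$ is built from $n$ and $e$ from $U^\mu$, so the estimates must be extracted in the order $V^\mu\to n\to U^\mu\to e$. Applying the workhorse estimate to the components of $V^\mu=mc\int_{\mathbb{R}^3}\int_0^\infty p^\mu F\,\phi\,d\mathcal{I}\,\frac{dp}{p^0}$ (whose weights $cp^\mu/p^0$ are bounded) gives $\|V^\mu-V_0^\mu\|_{L^\infty_x}\le C\sqrt{E(f)(t)}$, where $V_0^\mu$ is the value of $V^\mu$ at $f\equiv0$. Since $n^2=(mc)^{-2}V^\mu V_\mu$ with equilibrium value $1$, smallness of $E(f)(t)$ keeps $V^\mu$ near the timelike vector $V_0^\mu$ and forces $|n-1|\le C\sqrt{E(f)(t)}$, in particular $n\ge1/2$; hence $U^\mu=V^\mu/(mn)$ satisfies $|U^\mu-(c,0,0,0)|\le C\sqrt{E(f)(t)}$. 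For $e=\frac1c\int_{\mathbb{R}^3}\int_0^\infty (U^\mu p_\mu)^2 F\big(1+\tfrac{\mathcal{I}}{mc^2}\big)\phi\,d\mathcal{I}\,\frac{dp}{p^0}$ I would write
\[
e-\widetilde{e}(T_0)=\frac1c\int_{\mathbb{R}^3}\int_0^\infty\big[(U^\mu p_\mu)^2-(cp^0)^2\big]F\Big(1+\tfrac{\mathcal{I}}{mc^2}\Big)\phi\,d\mathcal{I}\,\frac{dp}{p^0}+\Big(e\big[(c,0,0,0),F\big]-\widetilde{e}(T_0)\Big),
\]
where the second term is a linear-in-$f$ correction with weight $(cp^0)^2(1+\frac{\mathcal{I}}{mc^2})$, hence $O(\sqrt{E(f)(t)})$, while in the first integral $(U^\mu p_\mu)^2-(cp^0)^2=\big((U-U_0)^\mu p_\mu\big)\big((U+U_0)^\mu p_\mu\big)$ is bounded by $C\sqrt{E(f)(t)}$ times a fixed polynomial in $p,\mathcal{I}$ once $E(f)(t)$ is small enough that $U^\mu p_\mu$ is comparable to $cp^0$; combining this with the crude bound on $\int_{\mathbb{R}^3}\int_0^\infty Q\,|F|$ yields $|e-\widetilde{e}(T_0)|\le C\sqrt{E(f)(t)}$. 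Finally $\big|\frac{e}{n}-\widetilde{e}(T_0)\big|\le\frac1n|e-\widetilde{e}(T_0)|+\frac{\widetilde{e}(T_0)}{n}|n-1|\le C\sqrt{E(f)(t)}$ because $n\ge1/2$.

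I expect the main obstacle to be organizational rather than analytic: one must respect the order $V^\mu\to n\to U^\mu\to e$, keep $E(f)(t)$ small enough that $n$ stays bounded away from $0$ (so that $e/n$ is well-defined), and check that the $U$-dependent weight $(U^\mu p_\mu)^2$ is dominated by a fixed polynomial in $p$ and $\mathcal{I}$ — after which every remaining estimate is a routine Cauchy--Schwarz bound controlled by \eqref{phi condition}.
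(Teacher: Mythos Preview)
Your argument is correct. The paper's own proof is a one-line forward reference to Lemma~\ref{lem2}, which in turn rests on the algebraic expansions of $n$ and $e/n$ via the auxiliary quantities $\Psi,\Psi_1,\Phi,\Phi_1$ of \eqref{notation2} together with Lemma~\ref{lem22}; what you have written is essentially a self-contained derivation of the $\alpha=0$ case of Lemma~\ref{lem2}, organized as the chain $V^\mu\to n\to U^\mu\to e\to e/n$ and using Cauchy--Schwarz against $\sqrt{F_E^0}$ plus \eqref{phi condition} at each step. The paper's route through the identity $\sqrt{1+\Psi}=1+\tfrac{\Psi}{2}-\tfrac{\Psi^2}{2(2+\Psi+2\sqrt{1+\Psi})}$ pays off later when one needs $\partial^\alpha$-estimates of the macroscopic fields (Lemma~\ref{lem2} with $\alpha\neq 0$), since the algebraic formulas differentiate cleanly; your sequential approach is more transparent for the present pointwise statement but would need reworking for derivatives. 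Both arguments rely on the same two ingredients, namely \eqref{phi condition} to bound the weighted $F_E^0$-moments and the Sobolev embedding $H^2_x\hookrightarrow L^\infty_x$ to pass from $\|f(x,\cdot)\|_{L^2_{p,\mathcal{I}}}$ to $\sqrt{E(f)(t)}$.
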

\begin{proof} 
By Lemma \ref{lem2} and the Soboelv embedding $H^2(\mathbb{R}_x^{3})\subseteq L^{\infty}(\mathbb{R}_x^{3})$, we have
	$$
	| n -1 |+ \left| \frac{e}{n} -\widetilde{e}(T_0) \right|\le C \|  f\|_{L^2_{p,\mathcal{I}}}\le C \left\|  \|f\|_{L^\infty_x}\right\|_{L^2_{p,\mathcal{I}}}\le C \left\|  \|f\|_{H^2_x}\right\|_{L^2_{p,\mathcal{I}}}\le C\sqrt{E(f)(t)}.
	$$
	Note that the above result is independent of the determination of $T$ since Lemma \ref{lem2} is established by the definition of $n$ and $e/n$ given in \eqref{macroscopic fields}.
\end{proof}
We are now ready to prove that \eqref{gamma relation} determines a unique $T$ in the near-equilibrium regime. 
\begin{proposition}\label{solvability gamma}
	Suppose $E(f)(t)$ is sufficiently small. Then $T$ can be uniquely determined by the relation \eqref{gamma relation}. Thus $T$ is written as 
	$$
	T=(\widetilde{e})^{-1}\left(\frac{e}{n}\right).
	$$
\end{proposition}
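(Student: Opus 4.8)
Since \eqref{gamma relation} is the scalar equation $\widetilde{e}(T)=e/n$, the plan is to prove that $\widetilde{e}$ is a continuous, strictly increasing bijection from $(0,\infty)$ onto an open interval, and that for $E(f)(t)$ small the number $e/n$ falls inside that interval; uniqueness will then come from injectivity and existence from surjectivity, with $T=(\widetilde{e})^{-1}(e/n)$. The first thing to check is that $\widetilde{e}$ is well defined and smooth: the denominator of $\widetilde{e}(T)$ is strictly positive because $\phi\ge0$ is not identically zero, the numerator and denominator are finite by \eqref{phi condition}, and — since \eqref{phi condition} dominates the integrands together with their $T$-derivatives locally uniformly on compact subsets of $(0,\infty)$ by factors of the form (polynomial)$\,\times\,$(exponential decay) — one may differentiate under the integral sign, so $\widetilde{e}\in C^1((0,\infty))$.

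The second step is the key computation. Differentiating the exponential in the numerator and denominator of $\widetilde{e}$ brings down a factor $\frac{(mc^2+\mathcal{I})\sqrt{1+|p|^2}}{k_BT^2}$, and the quotient rule then produces exactly the variance identity announced in the introduction, which in the variables of \eqref{gamma relation} reads
\[
\{\widetilde{e}\}'(T)=\frac{1}{k_BT^2}\Big\langle\big((mc^2+\mathcal{I})\sqrt{1+|p|^2}-\widetilde{e}(T)\big)^2\Big\rangle_T ,
\]
where $\langle\,\cdot\,\rangle_T$ is the expectation against the probability density proportional to $e^{-(mc^2+\mathcal{I})\frac{\sqrt{1+|p|^2}}{k_BT}}\phi(\mathcal{I})$ on $\mathbb{R}^3_p\times\mathbb{R}^+_{\mathcal{I}}$. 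This is a variance, hence $\ge0$; moreover it is \emph{strictly} positive for every $T\in(0,\infty)$, because the function $(mc^2+\mathcal{I})\sqrt{1+|p|^2}$ is non-constant on the support of that (absolutely continuous, non-degenerate) measure — e.g.\ it already varies as $|p|$ varies — so it cannot coincide with the constant $\widetilde{e}(T)$ almost everywhere. Hence $\widetilde{e}$ is strictly increasing on $(0,\infty)$, and in particular any $T$ solving \eqref{gamma relation} is unique.

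For existence, being continuous and strictly increasing, $\widetilde{e}$ maps $(0,\infty)$ homeomorphically onto the open interval $I:=\widetilde{e}((0,\infty))=\big(\lim_{T\to0^+}\widetilde{e}(T),\ \lim_{T\to\infty}\widetilde{e}(T)\big)$, and $\widetilde{e}(T_0)$ is an interior point of $I$; choose $r>0$ with $(\widetilde{e}(T_0)-r,\widetilde{e}(T_0)+r)\subset I$. By Lemma \ref{n positive}, once $E(f)(t)$ is small enough we have $n>0$ (so $e/n$ makes sense) and $\big|\,e/n-\widetilde{e}(T_0)\,\big|\le C\sqrt{E(f)(t)}<r$, whence $e/n\in I$ and $T:=(\widetilde{e})^{-1}(e/n)\in(0,\infty)$ is the unique solution of \eqref{gamma relation}. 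The only points needing genuine care are the interchange of differentiation and integration (routine given \eqref{phi condition}) and phrasing the strict positivity of the variance so as to rule out the degenerate case of a point-mass measure; everything else is the soft inversion of a monotone function combined with the perturbative estimate of Lemma \ref{n positive}.
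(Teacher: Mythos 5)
Your proposal is correct and takes essentially the same approach as the paper: compute $\{\widetilde{e}\}'(T)$ via the quotient rule, recognize it as a variance (hence strictly positive) so that $\widetilde{e}$ is strictly increasing, and then use Lemma \ref{n positive} to place $e/n$ inside the range of $\widetilde{e}$. Your formulation of the variance identity in terms of $\widetilde{e}(T)$ rather than $e/n$, and your explicit remark that the integrand $(mc^2+\mathcal{I})\sqrt{1+|p|^2}$ is non-constant on the support of the absolutely continuous reference measure, are slightly cleaner phrasings of points the paper leaves implicit, but the argument is substantively identical.
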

\begin{proof}
	We observe that  
	\begin{align*}
		\left\{\widetilde{e}\right\}^{\prime}(T)&=	\frac{1}{k_BT^2}\frac{\int_{\mathbb{R}^3}\int_0^\infty (1+|p|^2)e^{-\left(mc^2+ \mathcal{I} \right)\frac{1}{k_B T}\sqrt{1+|p|^2}} \left(mc^2+ \mathcal{I} \right)^2\phi(\mathcal{I})\, d\mathcal{I}dp}{ \int_{\mathbb{R}^3}\int_0^\infty e^{-\left(mc^2+ \mathcal{I} \right)\frac{1}{k_B T}\sqrt{1+|p|^2}}\phi(\mathcal{I}) \,d\mathcal{I}dp}\cr
		&- \frac{1}{k_BT^2}	\frac{\big(\int_{\mathbb{R}^3}\int_0^\infty \sqrt{1+|p|^2}e^{-\left(mc^2+ \mathcal{I} \right)\frac{1}{k_B T}\sqrt{1+|p|^2}} \left(mc^2+ \mathcal{I} \right)\phi(\mathcal{I})\, d\mathcal{I}dp\big)^2}{\big( \int_{\mathbb{R}^3}\int_0^\infty e^{-\left(mc^2+ \mathcal{I} \right)\frac{1}{k_B T}\sqrt{1+|p|^2}}\phi(\mathcal{I}) \,d\mathcal{I}dp\big)^2}\cr
		&=\frac{1}{k_B T^2}\left\{\frac{\int_{\mathbb{R}^3}\int_0^\infty (1+|p|^2)e^{-\left(mc^2+ \mathcal{I} \right)\frac{1}{k_B T}\sqrt{1+|p|^2}} \left(mc^2+ \mathcal{I} \right)^2\phi(\mathcal{I})\, d\mathcal{I}dp}{ \int_{\mathbb{R}^3}\int_0^\infty e^{-\left(mc^2+ \mathcal{I} \right)\frac{1}{k_B T}\sqrt{1+|p|^2}}\phi(\mathcal{I}) \,d\mathcal{I}dp}  -\Big(\frac{e}{n}\Big)^2\right\}.
	\end{align*}
	Using the change of variables $p\rightarrow \frac{p}{mc}$, one finds
	\begin{align}\label{solv}\begin{split}
			 \left\{\widetilde{e}\right\}^{\prime}(T)	&=\frac{1}{k_B T^2}\left\{c^2\frac{\int_{\mathbb{R}^3}\int_0^\infty (p^0)^2e^{-\left(1+ \frac{\mathcal{I}}{mc^2} \right)\frac{cp^0}{k_B T}} \left(1+ \frac{\mathcal{I}}{mc^2} \right)^2\phi(\mathcal{I})\, d\mathcal{I}dp}{ \int_{\mathbb{R}^3}\int_0^\infty e^{-\left(1+ \frac{\mathcal{I}}{mc^2} \right)\frac{cp^0}{k_B T}}\phi(\mathcal{I}) \,d\mathcal{I}dp}  -\Big(\frac{e}{n}\Big)^2\right\}\cr
			 &=\frac{1}{k_B T^2}\left\{\frac{1}{n}\int_{\mathbb{R}^3}\int_0^\infty \biggl\{cp^0\Big(1+ \frac{\mathcal{I}}{mc^2} \Big)\biggl\}^2F_E(g_r,0,T) \phi(\mathcal{I})\, d\mathcal{I}dp  -\Big(\frac{e}{n}\Big)^2\right\}
	\end{split}	\end{align}
	where $F_E(g_r,0,T)$ denotes
	\begin{align*}
F_E(g_r,0,T)&=	e^{-1+\frac{m}{k_B}\frac{ g_{r}}{T}-\left(1+\frac{\mathcal{I}}{mc^2}\right)\frac{cp^0}{k_B T}  }\cr
&=\frac{n}{\int_{\mathbb{R}^3}\int_0^\infty  e^{-\left(1+ \frac{\mathcal{I}}{mc^2} \right)\frac{cp^0}{k_B T}}\phi(\mathcal{I})\,d \mathcal{I}dp}e^{-\left(1+\frac{\mathcal{I}}{mc^2}\right)\frac{cp^0}{k_B T}  }.
	\end{align*}
	We also observe that 
	\begin{align}\label{slov 2}\begin{split}
		2\left(\frac{e}{n }\right)^2-\left(\frac{e}{n }\right)^2&= \frac{2e}{n }\frac{\int_{\mathbb{R}^3}\int_0^\infty \sqrt{1+|p|^2}e^{-\left(mc^2+ \mathcal{I} \right)\frac{1}{k_B T}\sqrt{1+|p|^2}} \left(mc^2+ \mathcal{I} \right)\phi(\mathcal{I})\, d\mathcal{I}dp}{ \int_{\mathbb{R}^3}\int_0^\infty e^{-\left(mc^2+ \mathcal{I} \right)\frac{1}{k_B T}\sqrt{1+|p|^2}}\phi(\mathcal{I}) \,d\mathcal{I}dp}\cr
		&-\left(\frac{e}{n}\right)^2 \frac{1}{n}\int_{\mathbb{R}^3}\int_0^\infty F_E(g_r,0,T)\phi(\mathcal{I})\,d\mathcal{I}dp\cr
		&=  \frac{1}{n}\int_{\mathbb{R}^3}\int_0^\infty\biggl\{ \frac{2cep^0}{n }\Big(1+ \frac{\mathcal{I}}{mc^2} \Big)-\left(\frac{e}{n}\right)^2 \biggl\}F_E(g_r,0,T) \phi(\mathcal{I})\, d\mathcal{I}dp 
\end{split}	\end{align}
where we used the change of variables $ p\rightarrow \frac{p}{mc}$ to get
\begin{align*}
 &\frac{\int_{\mathbb{R}^3}\int_0^\infty \sqrt{1+|p|^2}e^{-\left(mc^2+ \mathcal{I} \right)\frac{1}{k_B T}\sqrt{1+|p|^2}} \left(mc^2+ \mathcal{I} \right)\phi(\mathcal{I})\, d\mathcal{I}dp}{ \int_{\mathbb{R}^3}\int_0^\infty e^{-\left(mc^2+ \mathcal{I} \right)\frac{1}{k_B T}\sqrt{1+|p|^2}}\phi(\mathcal{I}) \,d\mathcal{I}dp}\cr
 &=  \frac{c\int_{\mathbb{R}^3}\int_0^\infty p^0e^{-\left(1+ \frac{\mathcal{I}}{mc^2} \right)\frac{cp^0}{k_B T}} \left(1+ \frac{\mathcal{I}}{mc^2} \right)\phi(\mathcal{I})\, d\mathcal{I}dp}{ \int_{\mathbb{R}^3}\int_0^\infty e^{-\left(1+ \frac{\mathcal{I}}{mc^2} \right)\frac{cp^0}{k_B T}}\phi(\mathcal{I}) \,d\mathcal{I}dp}\cr
 &=\frac{c}{n}\int_{\mathbb{R}^3}\int_0^\infty p^0F_E(g_r,0,T) \Big(1+ \frac{\mathcal{I}}{mc^2} \Big)\phi(\mathcal{I})\, d\mathcal{I}dp.
\end{align*}
Combining \eqref{solv} and \eqref{slov 2}, we have
\begin{align}\label{e5}
		 \left\{\widetilde{e}\right\}^{\prime}(T)&=\frac{1}{k_B nT^2}\int_{\mathbb{R}^3}\int_0^\infty \left\{cp^0\Big(1+\frac{\mathcal{I}}{mc^2}\Big)-\frac{e}{n}\right\}^2F_E(g_r,0,T)\phi(\mathcal{I})\,d\mathcal{I}dp.
	\end{align} 
Since Lemma \ref{n positive} says that $n$ is positive for sufficiently small $E(f)(t)$, \eqref{e5} implies that $\widetilde{e}(T)$ is a strictly increasing function. Furthermore, $\widetilde{e}(T)$ is continuous on $T\in (0,\infty)$ under the assumption \eqref{phi condition}.  So, there exists a positive constant $\delta_0$ such that $$
\left[\widetilde{e}(T_0)-\delta_0,~\widetilde{e}(T_0)+\delta_0\right]\subseteq \text{Range}\left(\widetilde{e}(T)\right).
$$ 
If $E(f)(t)\le \delta_0$, then we have from Lemma \ref{n positive} that 
$$
\widetilde{e}(T_0)-\delta_0\le \frac{e}{n}\le \widetilde{e}(T_0)+\delta_0
$$
which implies that the range of $e/n$ is included in the range of $\widetilde{e}(T)$ for sufficiently small $E(f)(t)$.
Therefore, there exists a one-to-one correspondence between $T$ and $e/n$ providing 
$$
T=(\widetilde{e})^{-1}\left(\frac{e}{n}\right).
$$
\end{proof}

 \section{Linearization}
In this section, the linearization of \eqref{PR} is discussed when the solution is sufficiently close to the global equilibrium. 
First, we provide computations of $F_E^0$ that will be used often later.
\begin{lemma}\label{computation F_E^0} The following identities hold:
	\begin{enumerate}
		\item  	$\displaystyle	\int_{\mathbb{R}^3}\int_0^\infty (p^i)^2 F_E^0 \left(1+\frac{\mathcal{I}}{mc^2}\right)\phi(\mathcal{I}) \,d\mathcal{I}\,\frac{dp}{p^0}= \frac{ k_BT_0}{c }. $
		\item  	$\displaystyle	\int_{\mathbb{R}^3}\int_0^\infty (p^0)^2 F_E^0 \left(1+\frac{\mathcal{I}}{mc^2}\right)\phi(\mathcal{I}) \,d\mathcal{I}\,\frac{dp}{p^0}= \frac{\widetilde{e}(T_0)}{c} .$
		\item $\displaystyle  \int_{\mathbb{R}^3}\int_0^\infty (p^i)^2   F_E^0\left(1+\frac{\mathcal{I}}{mc^2}\right)^2\phi(\mathcal{I})\,d\mathcal{I}dp=b_0m.$
		\item  $\displaystyle\frac{mc^2 b}{k_B nT}= \frac{e}{n}+ k_BT.$
\item $\displaystyle \frac{mc^2b_0}{k_BT_0 }= \widetilde{e}(T_0)+ k_BT_0 . $
	\end{enumerate}
Here  $b_0$ denotes  
	\begin{equation*}
   b_0=\frac{ mc^2}{\gamma_0^2}\left(\int_0^\infty \frac{K_2(\gamma_0^*)}{\gamma_0^*}\phi(\mathcal{I})\, d\mathcal{I}\right)^{-1}\int_0^\infty K_3(\gamma_0^*)\phi(\mathcal{I})\,d\mathcal{I}
	\end{equation*}
with
	$$
\gamma_0=\frac{mc^2}{k_BT_0},\qquad \gamma^*_0=\gamma_0\biggl( 1+\frac{\mathcal{I}}{mc^2}\biggl).
$$
\end{lemma}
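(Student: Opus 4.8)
The plan is to reduce every identity to a one-dimensional integral against $\phi(\mathcal{I})$ of an elementary Gaussian-type momentum integral, and then recognize the resulting $\mathcal{I}$-integrals as the Bessel-function representations appearing in $b_0$ and in $\widetilde{e}(T_0)$. First I would record the basic change of variables: writing $\gamma_0^*=\gamma_0(1+\mathcal{I}/(mc^2))$ and using the substitution $p/(mc)\mapsto p$ together with the standard polar/Bessel identities
\begin{align*}
\int_{\mathbb{R}^3} e^{-\gamma_0^* \sqrt{1+|p|^2}}\,\frac{dp}{\sqrt{1+|p|^2}} &= 4\pi\,\frac{K_1(\gamma_0^*)}{\gamma_0^*},\cr
\int_{\mathbb{R}^3} e^{-\gamma_0^* \sqrt{1+|p|^2}}\,dp &= 4\pi\,\frac{K_2(\gamma_0^*)}{\gamma_0^*},
\end{align*}
and the corresponding ones with extra factors of $\sqrt{1+|p|^2}$ or $(p^i)^2$, each momentum integral becomes an explicit combination of $K_1,K_2,K_3$ evaluated at $\gamma_0^*$. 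For the $(p^i)^2$ integrals I would use the isotropy identity $\int (p^i)^2 g(|p|)\,dp=\tfrac13\int |p|^2 g(|p|)\,dp$ and $|p|^2=(1+|p|^2)-1$, so that $(p^i)^2$-moments reduce to differences of $K_n$'s; in particular the recurrence $K_{n+1}(z)-K_{n-1}(z)=\tfrac{2n}{z}K_n(z)$ collapses several of these combinations.

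With those preliminaries, I would verify (1) and (2) directly: after pulling out the constant $e^{-1+\frac{m}{k_B}\frac{g_{r0}}{T_0}}$ from $F_E^0$ and performing the $p$-integral, (1) becomes (constant) $\times\int_0^\infty \frac{K_2(\gamma_0^*)}{\gamma_0^*}\,\gamma_0^{-1}\phi(\mathcal{I})\,d\mathcal{I}$ up to dimensional factors, and the normalization of $F_E^0$ (i.e. the defining relation $e^{-1+\frac{m}{k_B}\frac{g_{r0}}{T_0}}=n_0/[(mc)^3\int\cdots]$ with $n_0=1$ at the global equilibrium) cancels the denominator, leaving $k_BT_0/c$. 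Identity (2) is the same computation with one extra power of $p^0(1+\mathcal{I}/(mc^2))$ inside, which is precisely the numerator of $\widetilde{e}(T_0)$ from \eqref{gamma relation} divided by its denominator — so (2) is essentially the definition of $\widetilde{e}(T_0)$ rewritten in the $cp^0$ variables, giving $\widetilde{e}(T_0)/c$. Identity (3) is the analogue of (1) with the weight $(1+\mathcal{I}/(mc^2))^2$ and \emph{without} the $1/p^0$; carrying out the momentum integral and using $|p|^2=(1+|p|^2)-1$ yields a combination of $\int_0^\infty K_3(\gamma_0^*)\phi\,d\mathcal{I}$ and $\int_0^\infty \frac{K_2(\gamma_0^*)}{\gamma_0^*}\phi\,d\mathcal{I}$ that, after invoking the normalization and the recurrence $K_3-K_1=\tfrac{4}{\gamma_0^*}K_2$, matches exactly the definition of $b_0$ (times $m$).

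Identities (4) and (5) are purely algebraic consequences. For (5) I would combine (2) and (3): from (3), $mc^2 b_0/(k_BT_0)$ equals (after the normalization cancellation) a ratio of $\int K_3 \phi\,d\mathcal{I}$ to $\int \frac{K_2}{\gamma_0^*}\phi\,d\mathcal{I}$ scaled appropriately, while (2) identifies $\widetilde e(T_0)$ with the ratio involving $\int \frac{K_2}{\gamma_0^*}(1+\mathcal{I}/(mc^2))\phi\,d\mathcal{I}$; the recurrence $K_3(\gamma_0^*)=K_1(\gamma_0^*)+\tfrac{4}{\gamma_0^*}K_2(\gamma_0^*)$ together with $K_1/\gamma_0^*$-type bookkeeping turns the $b_0$ ratio into $\widetilde e(T_0)+k_BT_0$. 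Then (4) follows from (5) by the one-to-one correspondence of Proposition \ref{solvability gamma}: replacing $(T_0,b_0)$ by $(T,b)$ and $\widetilde e(T_0)$ by $\widetilde e(T)=e/n$ is legitimate because $b$ and $b_0$ are defined by the identical formula \eqref{b def} with $\gamma_0^*$ replaced by $\gamma^*$, and the only place $n\neq 1$ enters is the explicit prefactor $nmc^2/\gamma^2$ in \eqref{b def}, which produces the extra factor $1/n$ on the left side of (4).

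The main obstacle I anticipate is bookkeeping rather than conceptual: keeping the powers of $mc$, $c$, $k_B$ and $\gamma_0$ straight through the change of variables $p/(mc)\mapsto p$, and correctly matching the Bessel-function combinations that come out of the momentum integrals to the specific combinations hard-coded in the definitions of $b_0$ and $\widetilde e$. A secondary point needing care is that identity (3) integrates $\phi(\mathcal{I})$ against $K_3(\gamma_0^*)$ with no decaying prefactor, so one must invoke the growth hypothesis \eqref{phi condition} (via the asymptotics $K_n(z)\sim \sqrt{\pi/(2z)}e^{-z}$) to guarantee finiteness — but this is exactly what \eqref{phi condition} was designed to cover.
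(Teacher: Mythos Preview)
Your proposal is correct and follows essentially the same route as the paper: change of variables $p/(mc)\to p$, isotropy, and integration by parts (which you package as Bessel recurrences), combined with the normalization of $F_E^0$ and the definition of $\widetilde e$. The only cosmetic difference is ordering: the paper proves (4) directly from \eqref{gamma relation} and \eqref{b def} and then says (5) is the same, whereas you do (5) first and obtain (4) by the parameter substitution $(T_0,1,b_0)\mapsto(T,n,b)$; your invocation of Proposition~\ref{solvability gamma} there is not really needed, since the identity is purely algebraic in the Bessel integrals once $\widetilde e(T)=e/n$ is taken as the defining relation.
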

\begin{proof}
	For reader's convenience, we record the definition of $F_E^0$:
$$
F_E^0=\exp\left\{-1+\frac{m}{k_B}\frac{ g_{r0}}{T_0}-\left(1+\frac{\mathcal{I}}{mc^2}\right)\frac{cp^0}{k_B T_0}  \right\}
$$
where
$$		
e^{-1+\frac{m}{k_B}\frac{g_{r0}}{T_0}}=\frac{1}{(mc)^3\int_{\mathbb{R}^3}\int_0^\infty  e^{-\left(mc^2+ \mathcal{I} \right)\frac{1}{k_B T_0}\sqrt{1+|p|^2}}\phi(\mathcal{I})\,d \mathcal{I}dp}.
$$
	%
	$\bullet$ Proof of (1): It follows from the change of variables $\frac{p}{mc}\rightarrow p$ that
	\begin{eqnarray*}
		&&\int_{\mathbb{R}^3}\int_0^\infty (p^i)^2F_E^0 \biggl(1+\frac{\mathcal{I}}{mc^2}\biggl)\phi(\mathcal{I}) \,d\mathcal{I}\,\frac{dp}{p^0}\cr
		&&=e^{-1+\frac{m}{k_B}\frac{ g_{r0}}{T_0}}	\int_{\mathbb{R}^3}\int_0^\infty (p^i)^2e^{-\left(1+\frac{\mathcal{I}}{mc^2}\right)\frac{cp^0}{k_B T_0}} \biggl(1+\frac{\mathcal{I}}{mc^2}\biggl)\phi(\mathcal{I}) \,d\mathcal{I}\,\frac{dp}{p^0}\cr
		&&=m^3c^2e^{-1+\frac{m}{k_B}\frac{ g_{r0}}{T_0}}	\int_{\mathbb{R}^3}\int_0^\infty  (p^i)^2 e^{-\left(mc^2+ \mathcal{I} \right)\frac{1}{k_B T_0}\sqrt{1+|p|^2}} \left(mc^2+ \mathcal{I} \right)\phi(\mathcal{I}) \,d\mathcal{I}\,\frac{dp}{\sqrt{1+|p|^2}}\cr
		&&=\frac{1}{3c}   \frac{	\int_{\mathbb{R}^3}\int_0^\infty  |p|^2 e^{-\left(mc^2+ \mathcal{I} \right)\frac{1}{k_B T_0}\sqrt{1+|p|^2}} \left(mc^2+ \mathcal{I} \right)\phi(\mathcal{I}) \,d\mathcal{I}\,\frac{dp}{\sqrt{1+|p|^2}}}{ \int_{\mathbb{R}^3}\int_0^\infty  e^{-\left(mc^2+ \mathcal{I} \right)\frac{1}{k_B T_0}\sqrt{1+|p|^2}}\phi(\mathcal{I})\,d \mathcal{I}dp}.
	\end{eqnarray*}
Using spherical coordinates and integration by parts, we have
	\begin{eqnarray*}
		&&\int_{\mathbb{R}^3}\int_0^\infty (p^i)^2F_E^0 \left(1+\frac{\mathcal{I}}{mc^2}\right)\phi(\mathcal{I}) \,d\mathcal{I}\,\frac{dp}{p^0}\cr
		&&=\frac{1}{3c}   \frac{	\int_{\mathbb{R}^3}\int_0^\infty  |p|^2 e^{-\left(mc^2+ \mathcal{I} \right)\frac{1}{k_B T_0}\sqrt{1+|p|^2}} \left(mc^2+ \mathcal{I} \right)\phi(\mathcal{I}) \,d\mathcal{I}\,\frac{dp}{\sqrt{1+|p|^2}}}{ \int_{\mathbb{R}^3}\int_0^\infty  e^{-\left(mc^2+ \mathcal{I} \right)\frac{1}{k_B T_0}\sqrt{1+|p|^2}}\phi(\mathcal{I})\,d \mathcal{I}dp}\cr
		&&=\frac{1}{3c}   \frac{	\int_0^\infty\int_0^\infty  \frac{r^4}{\sqrt{1+r^2}} e^{-\left(mc^2+ \mathcal{I} \right)\frac{1}{k_B T_0}\sqrt{1+r^2}} \left(mc^2+ \mathcal{I} \right)\phi(\mathcal{I}) \,d\mathcal{I}\,dr}{ \int_0^\infty\int_0^\infty  r^2e^{-\left(mc^2+ \mathcal{I} \right)\frac{1}{k_B T_0}\sqrt{1+r^2}}\phi(\mathcal{I})\,d \mathcal{I}dr}\cr
		&&=\frac{k_BT_0}{c}   
		\end{eqnarray*}
\noindent
$\bullet$ Proof of (2):	It can be obtained in a similar way to (1) as follows
	\begin{eqnarray*}
		&&\int_{\mathbb{R}^3}\int_0^\infty (p^0)^2F_E^0 \left(1+\frac{\mathcal{I}}{mc^2}\right)\phi(\mathcal{I}) \,d\mathcal{I}\,\frac{dp}{p^0}\cr
		&&=e^{-1+\frac{m}{k_B}\frac{ g_{r0}}{T_0}}	\int_{\mathbb{R}^3}\int_0^\infty p^0e^{-\left(1+\frac{\mathcal{I}}{mc^2}\right)\frac{cp^0}{k_B T_0}} \left(1+\frac{\mathcal{I}}{mc^2}\right)\phi(\mathcal{I}) \,d\mathcal{I}\,dp\cr
		&&=\frac{1}{c}\frac{\int_{\mathbb{R}^3}\int_0^\infty \sqrt{1+|p|^2}e^{-\left(mc^2+\mathcal{I}\right)\frac{1}{k_B T_0}\sqrt{1+|p|^2}} \left(mc^2+ \mathcal{I} \right)\phi(\mathcal{I}) \,d\mathcal{I}\,dp}{\int_{\mathbb{R}^3}\int_0^\infty e^{-\left(mc^2+\mathcal{I}\right)\frac{1}{k_B T_0}\sqrt{1+|p|^2}} \left(mc^2+ \mathcal{I} \right)\phi(\mathcal{I}) \,d\mathcal{I}\,dp}	\cr
		&&=\frac{\widetilde{e}(T_0)}{c}.
	\end{eqnarray*}
\noindent
$\bullet$ Proof of (3): We first introduce another representation of the modified Bessel functions of the second kind:
\begin{align*}
K_2(\gamma)=\int_0^\infty	\frac{2r^2+1}{\sqrt{1+r^2}}e^{-\gamma\sqrt{1+r^2}}\,dr,\qquad K_3(\gamma)=\int_0^\infty	(4r^2+1)e^{-\gamma\sqrt{1+r^2}}\,dr.
\end{align*}
Using this, one can rewrite $e^{-1+\frac{m}{k_B}\frac{ g_{r0}}{T_0}}$ as
	\begin{align}\label{K2}\begin{split}
	e^{-1+\frac{m}{k_B}\frac{ g_{r0}}{T_0}} &=\frac{1}{4\pi(mc)^3}\left(\int_0^\infty\int_0^\infty  r^2 e^{-\left(1+ \frac{\mathcal{I}}{mc^2} \right)\gamma_0\sqrt{1+r^2}}\phi(\mathcal{I})\,d \mathcal{I}dr\right)^{-1}\cr
	&=\frac{1}{4\pi(mc)^3}\left(\int_0^\infty\int_0^\infty  \frac{2r^2+1}{\sqrt{1+r^2}}\frac{1}{\gamma_0^*} e^{-\gamma^*_0\sqrt{1+r^2}}\phi(\mathcal{I})\,d \mathcal{I}dr\right)^{-1}\cr
	&=\frac{1}{4\pi(mc)^3 }\left(\int_0^\infty \frac{K_2(\gamma_0^*)}{\gamma_0^*}\phi(\mathcal{I})\, d\mathcal{I}\right)^{-1}.
\end{split}	\end{align}
	Using spherical coordinates and change of variables $\frac{p}{mc}\rightarrow p$, we have from \eqref{K2} that
	\begin{align}\label{(3)}
	\begin{split} 
		&\int_{\mathbb{R}^3}\int_0^\infty (p^j)^2   F_E^0\left(1+\frac{\mathcal{I}}{mc^2}\right)^2\phi(\mathcal{I})\,d\mathcal{I}dp\cr
		&=e^{-1+\frac{m}{k_B}\frac{ g_{r0}}{T_0}}\frac{1}{3 }\int_{\mathbb{R}^3}\int_0^\infty |p|^2   e^{-\left(1+\frac{\mathcal{I}}{mc^2}\right)\frac{cp^0}{k_B T_0}}\left(1+\frac{\mathcal{I}}{mc^2}\right)^2\phi(\mathcal{I})\,d\mathcal{I}dp\cr
		&=\frac{ (mc)^2}{3 }\left(\int_0^\infty \frac{K_2(\gamma_0^*)}{\gamma_0^*}\phi(\mathcal{I})\, d\mathcal{I}\right)^{-1}\int_0^\infty\int_0^\infty r^4   e^{-\left(1+ \frac{\mathcal{I}}{mc^2} \right)\gamma_0\sqrt{1+r^2}}\left(1+\frac{\mathcal{I}}{mc^2}\right)^2\phi(\mathcal{I})\,d\mathcal{I}dr.
	\end{split}
\end{align}
By integration by parts twice, \eqref{(3)} becomes 
	\begin{align*}
		&\int_{\mathbb{R}^3}\int_0^\infty (p^j)^2   F_E^0\left(1+\frac{\mathcal{I}}{mc^2}\right)^2\phi(\mathcal{I})\,d\mathcal{I}dp\cr
		&=\frac{ (mc)^2}{3 }\left(\int_0^\infty \frac{K_2(\gamma_0^*)}{\gamma_0^*}\phi(\mathcal{I})\, d\mathcal{I}\right)^{-1} \int_0^\infty\int_0^\infty r^4   e^{-\left(1+ \frac{\mathcal{I}}{mc^2} \right)\gamma_0\sqrt{1+r^2}}\left(1+\frac{\mathcal{I}}{mc^2}\right)^2\phi(\mathcal{I})\,d\mathcal{I}dr\cr
		&=\frac{ (mc)^2}{\gamma_0^2 } \left(\int_0^\infty \frac{K_2(\gamma_0^*)}{\gamma_0^*}\phi(\mathcal{I})\, d\mathcal{I}\right)^{-1}\int_0^\infty\int_0^\infty (4r^2+1)   e^{-\left(1+ \frac{\mathcal{I}}{mc^2} \right)\gamma_0\sqrt{1+r^2}} \phi(\mathcal{I})\,d\mathcal{I}dr\cr
		&=\frac{ (mc)^2}{\gamma_0^2 }\left(\int_0^\infty \frac{K_2(\gamma_0^*)}{\gamma_0^*}\phi(\mathcal{I})\, d\mathcal{I}\right)^{-1}\int_0^\infty K_3(\gamma_0^*)\phi(\mathcal{I})\,d\mathcal{I}
\end{align*} 
which gives the desired result.\noindent\newline
	\noindent
	$\bullet$ Proof of (4): Recall from \eqref{gamma relation} that
	\begin{align*}
	\frac{e}{n}&=\frac{\int_{\mathbb{R}^3}\int_0^\infty \sqrt{1+|p|^2}e^{-\left(mc^2+ \mathcal{I} \right)\frac{1}{k_B T}\sqrt{1+|p|^2}} \left(mc^2+ \mathcal{I} \right)\phi(\mathcal{I})\, d\mathcal{I}dp}{ \int_{\mathbb{R}^3}\int_0^\infty e^{-\left(mc^2+ \mathcal{I} \right)\frac{1}{k_B T}\sqrt{1+|p|^2}}\phi(\mathcal{I}) \,d\mathcal{I}dp}\cr
&=mc^2\frac{\int_0^\infty\int_0^\infty r^2\sqrt{1+r^2}e^{-\left(1+ \frac{\mathcal{I}}{mc^2} \right)\gamma\sqrt{1+r^2}} \left(1+ \frac{\mathcal{I}}{mc^2} \right)\phi(\mathcal{I})\, d\mathcal{I}dr}{ \int_0^\infty\int_0^\infty r^2e^{-\left(1+ \frac{\mathcal{I}}{mc^2} \right)\gamma\sqrt{1+r^2}}\phi(\mathcal{I}) \,d\mathcal{I}dr}.
	\end{align*}
By \eqref{K2} and integration by parts, one finds
		\begin{align*}
	\frac{e}{n}&= mc^2 \left(\int_0^\infty \frac{K_2(\gamma^*)}{\gamma^*}\phi(\mathcal{I})\, d\mathcal{I}\right)^{-1}\int_0^\infty\int_0^\infty r^2\sqrt{1+r^2}e^{-\left(1+ \frac{\mathcal{I}}{mc^2} \right)\gamma\sqrt{1+r^2}} \left(1+ \frac{\mathcal{I}}{mc^2} \right)\phi(\mathcal{I})\, d\mathcal{I}dr\cr
	&= \frac{mc^2}{\gamma} \left(\int_0^\infty \frac{K_2(\gamma^*)}{\gamma^*}\phi(\mathcal{I})\, d\mathcal{I}\right)^{-1}\int_0^\infty\int_0^\infty (3r^2+1)e^{-\left(1+ \frac{\mathcal{I}}{mc^2} \right)\gamma\sqrt{1+r^2}} \phi(\mathcal{I})\, d\mathcal{I}dr\cr
	&= \frac{mc^2}{\gamma} \left(\int_0^\infty \frac{K_2(\gamma^*)}{\gamma^*}\phi(\mathcal{I})\, d\mathcal{I}\right)^{-1}\left(\int_0^\infty K_3(\gamma^*)\phi(\mathcal{I})\,d\mathcal{I}-\int_0^\infty\int_0^\infty r^2e^{-\left(1+ \frac{\mathcal{I}}{mc^2} \right)\gamma\sqrt{1+r^2}}\phi(\mathcal{I}) \,d\mathcal{I}dr \right)\cr
	&=\frac{\gamma b}{n}- \frac{ mc^2}{\gamma  }
\end{align*}
which gives the desired result. Since (5) can be obtained in the same manner as in (4), we omit it.
 \end{proof}
\noindent

\subsection{Linearization of \eqref{PR}}  
   Define $e_i$ $(i=1,\cdots,5)$ by
 \begin{align*}
	e_1&=\sqrt{F_E^0},\qquad e_{2,3,4}=\sqrt{\frac{1}{b_0m}}\left(1+\frac{\mathcal{I}}{mc^2}\right)p\sqrt{F_E^0},\cr
	e_5&=\sqrt{\frac{1}{k_BT_0^2\left\{\widetilde{e}\right\}^{\prime}(T_{0})}}\left\{cp^0\left(1+\frac{\mathcal{I}}{mc^2}\right)-\widetilde{e}(T_0)\right\}\sqrt{F_E^0}
\end{align*}
and the projection operator $P(f)$ by
\begin{align}\label{Pf}
	P(f)= \sum_{i=1}^5\langle f,e_{i} \rangle_{L^2_{p,I}} e_{i}.
\end{align}
Then, the equilibrium distribution function $F_E$ given in \eqref{GJdf} is linearized as follows. 
 \begin{lemma}\label{lin2}
 Suppose $E(f)(t)$ is sufficiently small. We then have
 	$$
    	\left(1-p^\mu q_\mu \frac{1+\frac{\mathcal{I}}{mc^2}}{bmc^2}\right)F_E-F_E^0  =\left(P(f)+\sum_{i=1}^4\Gamma_i (f)\right)\sqrt{F_E^0}.
 	$$
Here the nonlinear perturbations $\Gamma_i(f)$ $(i=1,\cdots, 4)$ are given by
  	\begin{eqnarray*}
&& \Gamma_1(f)=\left(\frac{\Psi_1}{2}-\frac{\Psi^2}{2(2+\Psi+2\sqrt{1+\Psi})}\right)\sqrt{F_E^0},\cr
  					&&\Gamma_2 (f)=  \frac{1}{\left\{\widetilde{e}\right\}^{\prime}(T_{0})}\frac{1}{k_BT_0^2}\left\{cp^0 \left(1+\frac{\mathcal{I}}{mc^2}\right) -\widetilde{e}(T_0)\right\}\sqrt{F_E^0}\cr
 		&&\hspace{9mm}\times\biggl\{\frac{1}{c}\left( 1-\int_{\mathbb{R}^3}\int_0^\infty f\sqrt{F_E^0}  \phi(\mathcal{I}) \,d\mathcal{I}\,dp \right) \int_{\mathbb{R}^3}\int_0^\infty \left\{	2cp^0\Phi+\Phi^2\right\}F\left(1+\frac{\mathcal{I}}{mc^2}\right)\phi(\mathcal{I})\,d\mathcal{I}\frac{dp}{p^0}\cr
 		&&\hspace{12mm}-\frac{1}{c}\left(	\frac{\Psi_1}{2}	+\frac{\Psi^3-3\Psi^2}{2(2+\Psi-\Psi^2+2\sqrt{1+\Psi})}\right) \int_{\mathbb{R}^3}\int_0^\infty \left(U^\mu p_\mu\right)^2F\left(1+\frac{\mathcal{I}}{mc^2}\right)\phi(\mathcal{I})\,d\mathcal{I}\frac{dp}{p^0}\cr
 		&&\hspace{12mm}-c\int_{\mathbb{R}^3}\int_0^\infty  f\sqrt{F_E^0}\phi(\mathcal{I}) \,d\mathcal{I}dp\int_{\mathbb{R}^3}\int_0^\infty p^0f\sqrt{F_E^0}\left(1+\frac{\mathcal{I}}{mc^2}\right)\phi(\mathcal{I})\,d\mathcal{I}\,dp\biggl\}\cr
 	&&\Gamma_{3}(f)=-c\frac{1+\frac{\mathcal{I}}{mc^2}}{k_BT_0}\left\{\frac{\Psi}{2}+\frac{\Psi^3-3\Psi^2}{2(2+\Psi-\Psi^2+2\sqrt{1+\Psi})}\right\}\int_{\mathbb{R}^3}\int_0^\infty pf\sqrt{F_E^0} \phi(\mathcal{I}) \,d\mathcal{I}\,\frac{dp}{p^0}\cdot p\sqrt{F_E^0}\cr
 	&&\hspace{9mm}+\frac{1+\frac{\mathcal{I}}{mc^2}}{b_0mc^2}  \Gamma_3^*(f)\cdot p\sqrt{F_E^0}-p^0 q^0\frac{1+\frac{\mathcal{I}}{mc^2}}{b_0mc^2} \sqrt{F_E^0}\cr
 	&&\Gamma_{4} (f)=\frac{1}{\sqrt{F_E^0}}\int_0^1 (1-\theta)\left(n -1,U,\frac{e}{n} -\widetilde{e}(T_0),q^\mu\right)D^2\widetilde{F}(\theta)\left(n -1,U ,\frac{e}{n} -\widetilde{e}(T_0),q^\mu\right)^T\,d\theta,
 	\end{eqnarray*}
 where  $\Gamma_{3}^*(f)$ denotes
\begin{eqnarray*}\label{gamma3}\begin{split}
	&\Gamma_{3}^*(f)	= -c^2\sum_{i=1}^3\int_{\mathbb{R}^3}\int_0^\infty p^if\sqrt{F_E^0}\phi(\mathcal{I})\,d\mathcal{I}\,\frac{dp}{p^0}\int_{\mathbb{R}^3}\int_0^\infty p p^if\sqrt{F_E^0}\left(1+\frac{\mathcal{I}}{mc^2}\right)\phi(\mathcal{I})\,d\mathcal{I}\frac{dp}{p^0}\cr
&\hspace{9mm}+c\int_{\mathbb{R}^3}\int_0^\infty p\Phi_1 F\left(1+\frac{\mathcal{I}}{mc^2}\right)\phi(\mathcal{I})\,d\mathcal{I} \frac{dp}{p^0}-\int_{\mathbb{R}^3}\int_0^\infty pf\sqrt{F_E^0} \phi(\mathcal{I}) \,d\mathcal{I}\,\frac{dp}{p^0}\cr
&\hspace{9mm} \times\int_{\mathbb{R}^3}\int_0^\infty  \left\{c^2p^0f\sqrt{F_E^0}+\frac{1}{p^0}\left(2cp^0\Phi+\Phi^2 \right)F\right\} \left(1+\frac{\mathcal{I}}{mc^2}\right)\phi(\mathcal{I}) \,d\mathcal{I}\,dp\cr
&\hspace{9mm}+	\left\{\frac{\Psi}{2}+\frac{\Psi^3-3\Psi^2}{2(2+\Psi-\Psi^2+2\sqrt{1+\Psi})}\right\}\int_{\mathbb{R}^3}\int_0^\infty pf\sqrt{F_E^0} \phi(\mathcal{I}) \,d\mathcal{I}\,\frac{dp}{p^0}\cr
&\hspace{9mm} \times\int_{\mathbb{R}^3}\int_0^\infty  \left(U^\nu p_\nu\right)^2 F \left(1+\frac{\mathcal{I}}{mc^2}\right)\phi(\mathcal{I}) \,d\mathcal{I}\,\frac{dp}{p^0},
\end{split}\end{eqnarray*}
and $\Psi,\Psi_1,\Phi$ and $\Phi_1$ are defined as
  	\begin{eqnarray}\label{notation2}
 	\begin{split}
 	&\Psi=2\int_{\mathbb{R}^3}\int_0^\infty f\sqrt{F_E^0}  \phi(\mathcal{I}) \,d\mathcal{I}\,dp+	\left(\int_{\mathbb{R}^3}\int_0^\infty f\sqrt{F_E^0}  \phi(\mathcal{I}) \,d\mathcal{I}\,dp\right)^2\cr
 	&\hspace{3mm}-\sum_{i=1}^3 \left(\int_{\mathbb{R}^3}\int_0^\infty p^if\sqrt{F_E^0}  \phi(\mathcal{I}) \,d\mathcal{I}\,\frac{dp}{p^0}\right)^2,\cr
 	&\Psi_1= \left(\int_{\mathbb{R}^3}\int_0^\infty f\sqrt{F_E^0}  \phi(\mathcal{I}) \,d\mathcal{I}\,dp\right)^2-\sum_{i=1}^3 \left(\int_{\mathbb{R}^3}\int_0^\infty p^if\sqrt{F_E^0}  \phi(\mathcal{I}) \,d\mathcal{I}\,\frac{dp}{p^0}\right)^2,\cr
 	&\Phi= U_{\mu} p^\mu-cp^0,\cr
 	&\Phi_1= U_{\mu} p^\mu-cp^0+c\sum_{i=1}^3 p^i\int_{\mathbb{R}^3}\int_0^\infty p^i f\sqrt{F_E^0}\phi(\mathcal{I})\,d\mathcal{I}\frac{dp}{p^0}.
 	 	\end{split}\end{eqnarray}
 \end{lemma}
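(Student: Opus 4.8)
The plan is to realise the left-hand side as a smooth function of the macroscopic fields of $F$ and to Taylor-expand it to second order around the global equilibrium. Write $Z=(n,U,e/n,q^\mu)$ for those fields (defined by \eqref{macroscopic fields}, with the four components $q^0,\dots,q^3$ treated as separate moments so that the Eckart constraints are automatic and never invoked) and $Z_0=(1,0,\widetilde{e}(T_0),0)$ for their equilibrium values; at the global equilibrium $n=1$, $U^\mu=(c,0,0,0)$, $q^\mu=0$, and $e/n=\widetilde{e}(T_0)$ (the last by Lemma \ref{computation F_E^0}(2)). By Lemma \ref{explicit GM} and Proposition \ref{solvability gamma}, $T=(\widetilde{e})^{-1}(e/n)$ and $g_r$ is then fixed, and likewise $b$ (see \eqref{b def}), so $F_E$ and the prefactor are explicit functions of $(n,U,e/n,q^\mu)$; assumption \eqref{phi condition} makes all the defining integrals finite and differentiable under the integral sign, hence
\[
\widetilde{F}(Z):=\left(1-p^\mu q_\mu\frac{1+\frac{\mathcal{I}}{mc^2}}{bmc^2}\right)F_E
\]
is a smooth function of $Z$ near $Z_0$ with $\widetilde{F}(Z_0)=F_E^0$, and for $E(f)(t)$ small (Lemma \ref{n positive}) the field $Z$ stays in that neighbourhood. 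Taylor's formula with integral remainder then gives
\[
\widetilde{F}(Z)-F_E^0=\sum_k\partial_{Z_k}\widetilde{F}(Z_0)(Z_k-Z_{0,k})+\int_0^1(1-\theta)(Z-Z_0)^T D^2\widetilde{F}\bigl(Z_0+\theta(Z-Z_0)\bigr)(Z-Z_0)\,d\theta,
\]
and the last term divided by $\sqrt{F_E^0}$ is, by definition, $\Gamma_4(f)$.

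It remains to analyse $\sum_k\partial_{Z_k}\widetilde{F}(Z_0)(Z_k-Z_{0,k})$. First I would compute the gradient at $Z_0$: since $F_E$ is linear in $n$ and the $q$-term of $\widetilde{F}$ is $n$-independent, $\partial_n\widetilde{F}(Z_0)=F_E^0$; differentiating the exponent $-(1+\mathcal{I}/mc^2)U^\mu p_\mu/k_BT$ (and using $\partial_{U^i}U^0=0$ at $U=0$) gives $\partial_{U^i}\widetilde{F}(Z_0)=\frac{(1+\mathcal{I}/mc^2)p^i}{k_BT_0}F_E^0$; differentiating through $T$ and using Lemma \ref{computation F_E^0}(2) for the logarithmic derivative of the normalizing factor gives $\partial_{e/n}\widetilde{F}(Z_0)=\frac{1}{k_BT_0^2\left\{\widetilde{e}\right\}'(T_0)}\{cp^0(1+\mathcal{I}/mc^2)-\widetilde{e}(T_0)\}F_E^0$; and $\partial_{q^\mu}\widetilde{F}(Z_0)=-\frac{(1+\mathcal{I}/mc^2)}{b_0mc^2}p_\mu F_E^0$ with $p_\mu=(p^0,-p)$. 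Next I would expand the moments $Z_k$ against $F=F_E^0+f\sqrt{F_E^0}$: from \eqref{macroscopic fields}, $n=\sqrt{1+\Psi}$, while $U^i$, $e/n$ and $q^\mu$ each split into an explicit linear-in-$f$ part plus a higher-order remainder; the quantities $\Psi,\Psi_1,\Phi,\Phi_1$ of \eqref{notation2} are exactly what is needed for this bookkeeping, $\Phi=U_\mu p^\mu-cp^0$ appearing because $e$ and $q^\mu$ contain $(U^\mu p_\mu)^2=(cp^0+\Phi)^2$, and $q^0$ (proportional to $U\cdot q$) having vanishing linear part.

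Sorting $\sum_k\partial_{Z_k}\widetilde{F}(Z_0)(Z_k-Z_{0,k})$ by order in $f$, the linear-in-$f$ part has to reproduce $P(f)\sqrt{F_E^0}$. The $n$-slot yields $\langle f,e_1\rangle e_1\sqrt{F_E^0}$ and the $e/n$-slot yields $\langle f,e_5\rangle e_5\sqrt{F_E^0}$, the normalizations being precisely those built into $e_1,e_5$ (one checks $\|e_1\|_{L^2_{p,\mathcal{I}}}=1$ from $\int F_E^0\phi\,d\mathcal{I}dp=1$, and $\|e_5\|_{L^2_{p,\mathcal{I}}}=1$ from \eqref{e5} evaluated at $Z_0$). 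The delicate point is that the $U^i$-slot and the $q^i$-slot each carry a moment of $f$ weighted by $p^i/p^0$ which is not of the form $\langle f,e_i\rangle$; however the two contributions combine, and these $p^i/p^0$-weighted pieces cancel exactly by virtue of Lemma \ref{computation F_E^0}(5), which gives $\frac{k_BT_0+\widetilde{e}(T_0)}{b_0mc^2}=\frac{1}{k_BT_0}$, leaving $\sum_{i=2}^4\langle f,e_i\rangle e_i\sqrt{F_E^0}$ with the normalization supplied by Lemma \ref{computation F_E^0}(1),(3). The remaining nonlinear-in-$f$ terms of $\sum_k\partial_{Z_k}\widetilde{F}(Z_0)(Z_k-Z_{0,k})$ are then $(\Gamma_1+\Gamma_2+\Gamma_3)\sqrt{F_E^0}$: $\Gamma_1\sqrt{F_E^0}$ is $\partial_n\widetilde{F}(Z_0)$ times the nonlinear part of $n-1=\sqrt{1+\Psi}-1$, i.e.\ $\bigl(\frac{\Psi_1}{2}-\frac{\Psi^2}{2(2+\Psi+2\sqrt{1+\Psi})}\bigr)F_E^0$; $\Gamma_2\sqrt{F_E^0}$ is $\partial_{e/n}\widetilde{F}(Z_0)$ times the nonlinear part of $e/n-\widetilde{e}(T_0)$; and $\Gamma_3\sqrt{F_E^0}$ collects $\partial_{U^i}\widetilde{F}(Z_0)$ and $\partial_{q^\mu}\widetilde{F}(Z_0)$ against the nonlinear parts of $U^i$ and $q^\mu$ — the isolated $p^0q^0$-term being $\partial_{q^0}\widetilde{F}(Z_0)\cdot q^0$ (all of $q^0$ is nonlinear), and $\Gamma_3^*(f)$ being what survives from the $U^i$- and $q^i$-expansions.

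The only genuine obstacle is organizational: reducing the exact nonlinear remainders of $n$, $e/n$, $U^i$, $q^\mu$ to the closed forms written in the statement — which involves expanding $1/\sqrt{1+\Psi}$, $(cp^0+\Phi)^2$, the products $U\cdot q$, and so on — is a long but routine exercise in rational-function algebra, and one must keep careful track of the $1/p^0$ weights throughout. All the conceptual content sits in the gradient evaluation at $Z_0$ and in the single cancellation furnished by Lemma \ref{computation F_E^0}(5).
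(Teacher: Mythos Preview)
Your proposal is correct and follows essentially the same approach as the paper: Taylor-expand $\widetilde{F}$ in the macroscopic fields $(n,U,e/n,q^\mu)$ to second order with integral remainder (giving $\Gamma_4$), compute the gradient at the equilibrium values, and then split each first-order term into its linear-in-$f$ piece (assembling $P(f)\sqrt{F_E^0}$) and its nonlinear remainder (giving $\Gamma_1,\Gamma_2,\Gamma_3$). You have also correctly identified the one nontrivial cancellation --- the $p^i/p^0$-weighted moments from the $U^i$- and $q^i$-slots combine via Lemma~\ref{computation F_E^0}(5) --- which is exactly how the paper handles $I_3+I_4$.
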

 \begin{proof}
We consider the transitional macroscopic fields between $F$ and $F_E^0$:
 	\begin{equation*} 
 	\left(n_\theta, U_\theta,\left(\frac{e}{n}\right)_\theta,q^\mu_\theta\right)=\theta	\Big(n, U,\frac{e}{n},q^\mu\Big)+(1-\theta)	(1,0,\widetilde{e}(T_0),0),
 	\end{equation*}
 	and define $\widetilde{F}(\theta)$ and $F(\theta)$ by
 	\begin{align*}
 	\widetilde{F}(\theta)&=\left(1-p^\mu q_{\theta\mu} \frac{1+\frac{\mathcal{I}}{mc^2}}{b_\theta mc^2}\right) e^{-1+\frac{m}{k_B}\frac{ g_{r\theta}}{T_\theta}-\left(1+\frac{\mathcal{I}}{mc^2}\right)\frac{1}{k_B T_\theta}U_\theta^\mu p_\mu }\cr
 	 F(\theta)&=e^{-1+\frac{m}{k_B}\frac{ g_{r\theta}}{T_\theta}-\left(1+\frac{\mathcal{I}}{mc^2}\right)\frac{1}{k_B T_\theta}U_\theta^\mu p_\mu }.
 	\end{align*}
Here $g_{r\theta}, T_\theta$ and $b_\theta$ are given by  
\begin{align*} 
	e^{-1+\frac{m}{k_B}\frac{g_{r\theta}}{T_\theta}}&=\frac{n_\theta}{(mc)^3\int_{\mathbb{R}^3}\int_0^\infty  e^{-\left(mc^2+ \mathcal{I} \right)\frac{1}{k_B T_\theta}\sqrt{1+|p|^2}}\phi(\mathcal{I})\,d \mathcal{I}dp},\qquad T_\theta=\{\widetilde{e}\}^{-1}\left(\frac{e}{n}\right)_\theta \cr
&b_\theta=\frac{n_\theta mc^2}{\gamma_\theta^2}\left(\int_0^\infty \frac{K_2(\gamma_\theta^*)}{\gamma_\theta^*}\phi(\mathcal{I})\, d\mathcal{I}\right)^{-1}\int_0^\infty K_3(\gamma_\theta^*)\phi(\mathcal{I})\,d\mathcal{I}
\end{align*}  
  	with
  	$$
  	\gamma_\theta =\frac{mc^2}{k_BT_\theta},\qquad \gamma_\theta^*=\gamma_\theta\left(1+\frac{\mathcal{I}}{mc^2}\right).
  	$$
By Lemma \ref{computation F_E^0}, $b_\theta$ can be also expressed in the following manner 
$$
b_\theta= \frac{k_B n_\theta  T_\theta \widetilde{e}(T_\theta) }{mc^2} + \frac{n_\theta \left(k_B T_\theta\right)^2 }{mc^2}.
$$
Thus $\widetilde{F}(\theta)$  can be regarded as a function of $n_\theta, U_\theta,(e/n)_\theta $ and $q^\mu_\theta$, so it follows from the Talyor expansion that 
 	\begin{align*} 
 	& \left(1-p^\mu q_\mu \frac{1+\frac{\mathcal{I}}{mc^2}}{b mc^2}\right)F_E-F_E^0 \cr
 	&= \widetilde{F}(1)-\widetilde{F}(0) \cr
 	&= \frac{\partial \widetilde{F}}{\partial n_\theta}\biggl|_{\theta=0}\frac{\partial n_\theta}{\partial \theta}+\nabla_{U_\theta}\widetilde{F}\Big|_{\theta=0}\cdot \frac{\partial U_\theta}{\partial \theta}+\frac{\partial \widetilde{F}}{\partial (e/n)_\theta} \biggl|_{\theta=0}\frac{\partial (e/n)_\theta}{\partial \theta}+\frac{\partial \widetilde{F}}{\partial q^0_\theta} \biggl|_{\theta=0}\frac{\partial q^0_\theta}{\partial \theta}+\nabla_{q_\theta}\widetilde{F}\biggl|_{\theta=0}\cdot \frac{\partial q_\theta}{\partial \theta} \cr
 	&+  \int_0^1 (1-\theta)\left(n -1,U,T -T_0,q^\mu\right)D^2\widetilde{F}(\theta)\left(n -1,U ,T -T_0,q^\mu\right)^T\,d\theta\cr
 	&=(n -1) F_E^0 +\frac{1}{\left\{\widetilde{e}\right\}^{\prime}(T_{0})}\frac{1}{k_BT_0^2}\left(\frac{e}{n} -\widetilde{e}(T_0)\right)\biggl\{ cp^0 \biggl(1+\frac{\mathcal{I}}{mc^2}\biggl) -\widetilde{e}(T_0)\biggl\} F_E^0 + \frac{1+\frac{\mathcal{I}}{mc^2} }{k_BT_0} U \cdot p  F_E^0 \cr
 	&-p ^\mu q_\mu \frac{1+\frac{\mathcal{I}}{mc^2}}{b_0 mc^2}  F_E^0 +  \int_0^1 (1-\theta)\left(n -1,U,\frac{e}{n} -\widetilde{e}(T_0),q^\mu\right)D^2\widetilde{F}(\theta)\left(n -1,U ,\frac{e}{n} -\widetilde{e}(T_0),q^\mu\right)^T\,d\theta.
  	\end{align*}
In the last identity, we used the simple calculations
 	\begin{align*} 
 	&\frac{\partial \widetilde{F}}{\partial n_\theta}\biggl|_{\theta=0}=F_E^0,\qquad \nabla_{U_\theta}\widetilde{F}\Big|_{\theta=0}= \frac{1+\frac{\mathcal{I}}{mc^2}}{k_BT_0} pF_E^0,\cr
 	&\frac{\partial}{\partial (e/n)_\theta}\widetilde{F}\biggl|_{\theta=0}=\frac{1}{\left\{\widetilde{e}\right\}^{\prime}(T_{0})}\frac{1}{k_BT_0^2}\left\{cp^0 \left(1+\frac{\mathcal{I}}{mc^2 }\right) -\widetilde{e}(T_0) \right\}F_E^0,\cr
 	&\frac{\partial}{\partial q_\theta^0}\widetilde{F}\biggl|_{\theta=0}=-p^0\frac{1+\frac{\mathcal{I}}{mc^2}}{b_0mc^2} F_E^0,\qquad \nabla_{q_\theta}\widetilde{F}\Big|_{\theta=0}=p\frac{1+\frac{\mathcal{I}}{mc^2}}{b_0mc^2} F_E^0.
  \end{align*}
 	Now we use the notations $I_i$ $(i=1,\cdots,4)$ to denote the first four terms on the last identity, and we decompose them into the linear part and the nonlinear part.
 	\newline
 	$\bullet$ Decomposition of $I_1$: Inserting $F=F_E^0+f\sqrt{F_E^0}$ into a definition of $n$, one finds
\begin{align}\label{n=}\begin{split}
n&=\left\{	\left(\int_{\mathbb{R}^3}\int_0^\infty F \phi(\mathcal{I}) \,d\mathcal{I}\,dp\right)^2-\sum_{i=1}^3 \left(\int_{\mathbb{R}^3}\int_0^\infty p^i F \phi(\mathcal{I}) \,d\mathcal{I}\,\frac{dp}{p^0}\right)^2\right\}^{\frac{1}{2}}\cr
&=\biggl\{1+2\int_{\mathbb{R}^3}\int_0^\infty f\sqrt{F_E^0}  \phi(\mathcal{I}) \,d\mathcal{I}\,dp+	\left(\int_{\mathbb{R}^3}\int_0^\infty f\sqrt{F_E^0}  \phi(\mathcal{I}) \,d\mathcal{I}\,dp\right)^2\cr
&-\sum_{i=1}^3 \left(\int_{\mathbb{R}^3}\int_0^\infty p^if\sqrt{F_E^0}  \phi(\mathcal{I}) \,d\mathcal{I}\,\frac{dp}{p^0}\right)^2\biggl\}^{\frac{1}{2}}\cr
&=\sqrt{1+\Psi}
\end{split}\end{align}
which, together with the following identity \cite{BCNS}:
\begin{equation}\label{route pi}
\sqrt{1+\Psi}=1+ \frac{\Psi}{2}-\frac{\Psi^2}{2(2+\Psi+2\sqrt{1+\Psi})}
\end{equation}
gives
$$
n-1= \frac{\Psi}{2}-\frac{\Psi^2}{2(2+\Psi+2\sqrt{1+\Psi})}.
$$
Therefore,
\begin{align*}
I_1&=(n-1)F_E^0\cr
&=\left( \frac{\Psi}{2}-\frac{\Psi^2}{2(2+\Psi+2\sqrt{1+\Psi})}
\right)F_E^0\cr
&= \left(\int_{\mathbb{R}^3}\int_0^\infty f\sqrt{F_E^0}  \phi(\mathcal{I}) \,d\mathcal{I}\,dp+\frac{\Psi_1}{2}-\frac{\Psi^2}{2(2+\Psi+2\sqrt{1+\Psi})}\right)F_E^0\cr
&= \left\{\int_{\mathbb{R}^3}\int_0^\infty f\sqrt{F_E^0}  \phi(\mathcal{I}) \,d\mathcal{I}\,dp\right\}F_E^0+\Gamma_1(f)\sqrt{F_E^0}.
\end{align*}
\newline
\noindent
 	$\bullet$ Decomposition of $I_2$: Considering \eqref{n=} and the following identity \cite{BCNS}:
 	\begin{align*}
 	\frac{1}{\sqrt{1+\Psi}}=1-\frac{\Psi}{2}-\frac{\Psi^3-3\Psi^2}{2(2+\Psi-\Psi^2+2\sqrt{1+\Psi})},
 	\end{align*}
one can see that
 	\begin{align}\label{1/n=}
 	\begin{split}	\frac{1}{n}&=1-\frac{\Psi}{2}-\frac{\Psi^3-3\Psi^2}{2(2+\Psi-\Psi^2+2\sqrt{1+\Psi})}\cr
 	&=	1-\int_{\mathbb{R}^3}\int_0^\infty f\sqrt{F_E^0}  \phi(\mathcal{I}) \,d\mathcal{I}\,dp-\frac{\Psi_1}{2}	-\frac{\Psi^3-3\Psi^2}{2(2+\Psi-\Psi^2+2\sqrt{1+\Psi})}.
 	\end{split}\end{align}
We have from $\eqref{notation2}_3$ that
 	\begin{equation}\label{uq2}
 	(U^\mu p_\mu)^2=(cp^0)^2+2cp^0\Phi+\Phi^2.
 	\end{equation}
By \eqref{1/n=} and \eqref{uq2}, $\displaystyle e/n -\widetilde{e}(T_0)$ is decomposed as  	\begin{align}\label{e-e3}
 	\begin{split}
 \frac{e}{n} -\widetilde{e}(T_0)	&=\frac{1}{nc} \int_{\mathbb{R}^3}\int_0^\infty  \left(U^\mu p_\mu\right)^2F\left(1+\frac{\mathcal{I}}{mc^2}\right) \phi(\mathcal{I}) \,d\mathcal{I}\,\frac{dp}{p^0}-\widetilde{e}(T_0)\cr
 	&=	\frac{1}{c}\left\{	1-\int_{\mathbb{R}^3}\int_0^\infty f\sqrt{F_E^0}  \phi(\mathcal{I}) \,d\mathcal{I}\,dp-\frac{\Psi_1}{2}	-\frac{\Psi^3-3\Psi^2}{2(2+\Psi-\Psi^2+2\sqrt{1+\Psi})}\right\}\cr
 	& \times\int_{\mathbb{R}^3}\int_0^\infty \left\{	(cp^0)^2+2cp^0\Phi+\Phi^2\right\}F\left(1+\frac{\mathcal{I}}{mc^2}\right)\phi(\mathcal{I})\,d\mathcal{I}\frac{dp}{p^0}-\widetilde{e}(T_0)\cr
 	&\equiv  \left( 1-\int_{\mathbb{R}^3}\int_0^\infty f\sqrt{F_E^0}  \phi(\mathcal{I}) \,d\mathcal{I}\,dp \right)\int_{\mathbb{R}^3}\int_0^\infty c(p^0)^2 F\left(1+\frac{\mathcal{I}}{mc^2}\right)\phi(\mathcal{I})\,d\mathcal{I}\frac{dp}{p^0}-\widetilde{e}(T_0)\cr & +R_{I_2},
 	\end{split}\end{align}
 where $R_{I_2}$ denotes
 \begin{align*}
 	R_{I_2}&= \frac{1}{c}\left( 1-\int_{\mathbb{R}^3}\int_0^\infty f\sqrt{F_E^0}  \phi(\mathcal{I}) \,d\mathcal{I}\,dp \right) \int_{\mathbb{R}^3}\int_0^\infty \left\{	2cp^0\Phi+\Phi^2\right\}F\left(1+\frac{\mathcal{I}}{mc^2}\right)\phi(\mathcal{I})\,d\mathcal{I}\frac{dp}{p^0} \cr
 	&  -\frac{1}{c}\left(	\frac{\Psi_1}{2}	+\frac{\Psi^3-3\Psi^2}{2(2+\Psi-\Psi^2+2\sqrt{1+\Psi})}\right) \int_{\mathbb{R}^3}\int_0^\infty \left(U^\mu p_\mu\right)^2F\left(1+\frac{\mathcal{I}}{mc^2}\right)\phi(\mathcal{I})\,d\mathcal{I}\frac{dp}{p^0}.
 \end{align*}
Here $R_{I_2}$ is nonlinear with respect to $f$ that can be shown by the following identity
\begin{align*}
\Phi&=\sqrt{c^2+|U|^2}p^0-U\cdot p-cp^0 \cr
&=\left(\frac{|U|^2}{2c}-\frac{|U|^4}{2c(2c^2+|U|^2+2c\sqrt{1+|U|^2})}\right)p^0-U\cdot p.
\end{align*}
Using Lemma \ref{computation F_E^0} (2), the first two terms on the last identity of \eqref{e-e3} reduce to 
 	\begin{align}\label{e-e4}
 	\begin{split}
 	& 	\left(	1-\int_{\mathbb{R}^3}\int_0^\infty f\sqrt{F_E^0}  \phi(\mathcal{I}) \,d\mathcal{I}\,dp\right)\int_{\mathbb{R}^3}\int_0^\infty cp^0\left( F_E^0 +f\sqrt{F_E^0}\right)\left(1+\frac{\mathcal{I}}{mc^2}\right)\phi(\mathcal{I})\,d\mathcal{I}\,dp-\widetilde{e}(T_0)\cr
 	& = 	\left(	1-\int_{\mathbb{R}^3}\int_0^\infty f\sqrt{F_E^0}  \phi(\mathcal{I}) \,d\mathcal{I}\,dp\right)\left(\widetilde{e}(T_0)+\int_{\mathbb{R}^3}\int_0^\infty cp^0  f\sqrt{F_E^0}\left(1+\frac{\mathcal{I}}{mc^2}\right)\phi(\mathcal{I})\,d\mathcal{I}\,dp\right)-\widetilde{e}(T_0)\cr
 	& =\int_{\mathbb{R}^3}\int_0^\infty \left\{cp^0\left(1+\frac{\mathcal{I}}{mc^2}\right)-\widetilde{e}(T_0)\right\} f\sqrt{F_E^0} \phi(\mathcal{I})\,d\mathcal{I}dp\cr
 	&-c\int_{\mathbb{R}^3}\int_0^\infty  f\sqrt{F_E^0}\phi(\mathcal{I}) \,d\mathcal{I}dp\int_{\mathbb{R}^3}\int_0^\infty p^0f\sqrt{F_E^0}\left(1+\frac{\mathcal{I}}{mc^2}\right)\phi(\mathcal{I})\,d\mathcal{I}\,dp.
 	\end{split}\end{align}
 	Now we go back to \eqref{e-e3} with \eqref{e-e4} to get
 	\begin{align}\label{e-e0}		
 \begin{split}	
 \frac{e}{n}-\widetilde{e}(T_0)&= \int_{\mathbb{R}^3}\int_0^\infty \left\{cp^0\left(1+\frac{\mathcal{I}}{mc^2}\right)-\widetilde{e}(T_0)\right\} f\sqrt{F_E^0} \phi(\mathcal{I})\,d\mathcal{I}dp\cr
 	&-c\int_{\mathbb{R}^3}\int_0^\infty  f\sqrt{F_E^0}\phi(\mathcal{I}) \,d\mathcal{I}dp\int_{\mathbb{R}^3}\int_0^\infty p^0f\sqrt{F_E^0}\left(1+\frac{\mathcal{I}}{mc^2}\right)\phi(\mathcal{I})\,d\mathcal{I}\,dp+R_{I_2} ,
\end{split} 	\end{align}
which leads to 
\begin{align*}
 	I_2&=\frac{1}{\left\{\widetilde{e}\right\}^{\prime}(T_{0})}\frac{1}{k_BT_0^2}\left(\frac{e}{n} -\widetilde{e}(T_0)\right)\left(cp^0 \Big(1+\frac{\mathcal{I}}{mc^2}\Big) -\widetilde{e}(T_0)\right)F_E^0\cr
 	&=\frac{1}{\left\{\widetilde{e}\right\}^{\prime}(T_{0})}\frac{1}{k_BT_0^2}\int_{\mathbb{R}^3}\int_0^\infty \left(cp^0\Big(1+\frac{\mathcal{I}}{mc^2}\Big)-\widetilde{e}(T_0)\right) f\sqrt{F_E^0} \phi(\mathcal{I})\,d\mathcal{I}\,dp\cr
 	& \times\left(cp^0 \Big(1+\frac{\mathcal{I}}{mc^2}\Big) -\widetilde{e}(T_0)\right) F_E^0 +\Gamma_2 (f)\sqrt{F_E^0}.
 	\end{align*}
 	\newline
 	\noindent	
 	$\bullet$ Decomposition of $I_3$: By \eqref{1/n=}, $U$ is written as
 	\begin{align}\label{u_F}
 	\begin{split}
 	U&=\frac{c}{n}\int_{\mathbb{R}^3}\int_0^\infty pF\phi(\mathcal{I}) \,d\mathcal{I}\,\frac{dp}{p^0}\cr
 	&=c\left\{1-\frac{\Psi}{2}-\frac{\Psi^3-3\Psi^2}{2(2+\Psi-\Psi^2+2\sqrt{1+\Psi})}\right\}\int_{\mathbb{R}^3}\int_0^\infty pf\sqrt{F_E^0} \phi(\mathcal{I}) \,d\mathcal{I}\,\frac{dp}{p^0}.
 	\end{split}\end{align}
This directly leads to the linearization of $I_3$ as follows
 	\begin{align}\label{u}\begin{split}
 	I_3&=\frac{1+\frac{\mathcal{I}}{mc^2} }{k_BT_0} U \cdot p  F_E^0\cr
 	&=	c\frac{1+\frac{\mathcal{I}}{mc^2}}{k_BT_0}\int_{\mathbb{R}^3}\int_0^\infty pf\sqrt{F_E^0} \phi(\mathcal{I}) \,d\mathcal{I}\,\frac{dp}{p^0}\cdot p F_E^0 \cr
 	&-c\frac{1+\frac{\mathcal{I}}{mc^2}}{k_BT_0}\left\{\frac{\Psi}{2}+\frac{\Psi^3-3\Psi^2}{2(2+\Psi-\Psi^2+2\sqrt{1+\Psi})}\right\}\int_{\mathbb{R}^3}\int_0^\infty pf\sqrt{F_E^0} \phi(\mathcal{I}) \,d\mathcal{I}\,\frac{dp}{p^0}\cdot p F_E^0
 	\end{split}\end{align}
which will be combined with the result of $I_4$.
 	\noindent\newline
 	$\bullet$ Decomposition of $I_4$: Recall from \eqref{macroscopic fields} that the heat flux $q$ is defined by
 	\begin{align}\label{heat}\begin{split}
 	q &=c\int_{\mathbb{R}^3}\int_0^\infty p  \left(U^\nu p_\nu\right)F \left(1+\frac{\mathcal{I}}{mc^2}\right)\phi(\mathcal{I}) \,d\mathcal{I}\,\frac{dp}{p^0}\cr
 	&-\frac{1}{ c }U \int_{\mathbb{R}^3}\int_0^\infty   \left(U^\nu p_\nu\right)^2 F\left(1+\frac{\mathcal{I}}{mc^2}\right)\phi(\mathcal{I}) \,d\mathcal{I}\,\frac{dp}{p^0}.
\end{split} 	\end{align}
Inserting $\eqref{notation2}_4$ into the first term of \eqref{heat}, one finds 
 		 		\begin{eqnarray}\label{flux1}\begin{split}
 		 		& 	c\int_{\mathbb{R}^3}\int_0^\infty p  (U^\nu p_\nu)F\left(1+\frac{\mathcal{I}}{mc^2}\right)\phi(\mathcal{I})\,d\mathcal{I}\frac{dp}{p^0}	\cr
 		 		&= c^2\int_{\mathbb{R}^3}\int_0^\infty p   \biggl(F_E^0+f\sqrt{F_E^0}\biggl) \biggl(1+\frac{\mathcal{I}}{mc^2}\biggl)\phi(\mathcal{I})\,d\mathcal{I} dp \cr
 		 		&- c^2\sum_{i=1}^3\int_{\mathbb{R}^3}\int_0^\infty p^i f\sqrt{F_E^0}\phi(\mathcal{I})\,d\mathcal{I}\frac{dp}{p^0}\int_{\mathbb{R}^3}\int_0^\infty p  p^i \biggl(F_E^0+f\sqrt{F_E^0}\biggl) \biggl(1+\frac{\mathcal{I}}{mc^2}\biggl)\phi(\mathcal{I})\,d\mathcal{I}\frac{dp}{p^0}\cr
 		 		&+ c\int_{\mathbb{R}^3}\int_0^\infty p \Phi_1 F \biggl(1+\frac{\mathcal{I}}{mc^2}\biggl)\phi(\mathcal{I})\,d\mathcal{I}\frac{dp}{p^0}\cr
	 		 	&=c^2\int_{\mathbb{R}^3}\int_0^\infty p   f\sqrt{F_E^0}\left(1+\frac{\mathcal{I}}{mc^2}\right)\phi(\mathcal{I})\,d\mathcal{I}dp-  ck_BT_0  \int_{\mathbb{R}^3}\int_0^\infty p f\sqrt{F_E^0}\phi(\mathcal{I})\,d\mathcal{I}\frac{dp}{p^0}\cr
 		 		& -c^2\sum_{i=1}^3\int_{\mathbb{R}^3}\int_0^\infty p^if\sqrt{F_E^0}\phi(\mathcal{I})\,d\mathcal{I}\,\frac{dp}{p^0}\int_{\mathbb{R}^3}\int_0^\infty p p^if\sqrt{F_E^0}\left(1+\frac{\mathcal{I}}{mc^2}\right)\phi(\mathcal{I})\,d\mathcal{I}\frac{dp}{p^0}\cr
 		 		&+c\int_{\mathbb{R}^3}\int_0^\infty p \Phi_1 F\left(1+\frac{\mathcal{I}}{mc^2}\right)\phi(\mathcal{I})\,d\mathcal{I} \frac{dp}{p^0}.
\end{split} 		 	\end{eqnarray}
In the last identity, we used the spherical symmetry of $F_E^0$ and Lemma \ref{computation F_E^0} (1) so that 
 		 	\begin{eqnarray*}
 			&&\sum_{i=1}^3\int_{\mathbb{R}^3}\int_0^\infty p^i f\sqrt{F_E^0}\phi(\mathcal{I})\,d\mathcal{I}\frac{dp}{p^0}\int_{\mathbb{R}^3}\int_0^\infty pp^i F_E^0\left(1+\frac{\mathcal{I}}{mc^2}\right)\phi(\mathcal{I})\,d\mathcal{I}\frac{dp}{p^0}\cr
 			&&=\frac{1}{3}\int_{\mathbb{R}^3}\int_0^\infty p f\sqrt{F_E^0}\phi(\mathcal{I})\,d\mathcal{I}\frac{dp}{p^0}\int_{\mathbb{R}^3}\int_0^\infty |p|^2 F_E^0\left(1+\frac{\mathcal{I}}{mc^2}\right)\phi(\mathcal{I})\,d\mathcal{I}\frac{dp}{p^0} \cr
 			&&=\frac{k_BT_0}{c}\int_{\mathbb{R}^3}\int_0^\infty p f\sqrt{F_E^0}\phi(\mathcal{I})\,d\mathcal{I}\frac{dp}{p^0}.
 		\end{eqnarray*}
To deal with the second term of \eqref{heat}, we use Lemma \ref{computation F_E^0} (2)  and \eqref{uq2}  to obtain
 	\begin{eqnarray*}
 		&&\frac{1}{c}U\int_{\mathbb{R}^3}\int_0^\infty   \left(U^\nu p_\nu\right)^2 F \left(1+\frac{\mathcal{I}}{mc^2}\right)\phi(\mathcal{I}) \,d\mathcal{I}\,\frac{dp}{p^0}\cr
&&=\frac{1}{c}U\int_{\mathbb{R}^3}\int_0^\infty  \left\{(cp^0)^2+2cp^0\Phi+\Phi^2 \right\}\left(F_E^0+f\sqrt{F_E^0}\right) \left(1+\frac{\mathcal{I}}{mc^2}\right)\phi(\mathcal{I}) \,d\mathcal{I}\,\frac{dp}{p^0}\cr
	&&= \widetilde{e}(T_0)U + U\int_{\mathbb{R}^3}\int_0^\infty   \left\{cp^0f\sqrt{F_E^0}+\frac{1}{cp^0} \left(  2cp^0\Phi+\Phi^2 \right)F\right\}  \left(1+\frac{\mathcal{I}}{mc^2}\right)\phi(\mathcal{I}) \,d\mathcal{I}\,dp ,
 	\end{eqnarray*}
which, together with \eqref{u_F} gives
 	 	\begin{eqnarray}\label{flux3}
 	\begin{split}
 	& \frac{1}{c}U\int_{\mathbb{R}^3}\int_0^\infty   \left(U^\nu p_\nu\right)^2 F \left(1+\frac{\mathcal{I}}{mc^2}\right)\phi(\mathcal{I}) \,d\mathcal{I}\,\frac{dp}{p^0}\cr
  	&=c\widetilde{e}(T_0)\int_{\mathbb{R}^3}\int_0^\infty pf\sqrt{F_E^0} \phi(\mathcal{I}) \,d\mathcal{I}\,\frac{dp}{p^0}+\int_{\mathbb{R}^3}\int_0^\infty pf\sqrt{F_E^0} \phi(\mathcal{I}) \,d\mathcal{I}\,\frac{dp}{p^0}\cr
  	&\times \int_{\mathbb{R}^3}\int_0^\infty  \left\{c^2p^0f\sqrt{F_E^0}+\frac{1}{p^0}\left(2cp^0\Phi+\Phi^2 \right)F \right\}\left(1+\frac{\mathcal{I}}{mc^2}\right)\phi(\mathcal{I}) \,d\mathcal{I}\,dp \cr
 	&-	\left\{\frac{\Psi}{2}+\frac{\Psi^3-3\Psi^2}{2(2+\Psi-\Psi^2+2\sqrt{1+\Psi})}\right\}\int_{\mathbb{R}^3}\int_0^\infty pf\sqrt{F_E^0} \phi(\mathcal{I}) \,d\mathcal{I}\,\frac{dp}{p^0}\cr
 	& \times\int_{\mathbb{R}^3}\int_0^\infty   \left(U^\nu p_\nu\right)^2 F\left(1+\frac{\mathcal{I}}{mc^2}\right)\phi(\mathcal{I}) \,d\mathcal{I}\,\frac{dp}{p^0}.
 	\end{split}
 	\end{eqnarray}
 We go back to \eqref{heat} with \eqref{flux1} and \eqref{flux3} to get
 	\begin{eqnarray}\label{flux}\begin{split}
 	&p\cdot q\frac{1+\frac{\mathcal{I}}{mc^2}}{b_0mc^2} F_E^0\cr
 	&=  \frac{1+\frac{\mathcal{I}}{mc^2}}{b_0m }\int_{\mathbb{R}^3}\int_0^\infty p   f\sqrt{F_E^0}\left(1+\frac{\mathcal{I}}{mc^2}\right)\phi(\mathcal{I})\,d\mathcal{I}dp\cdot pF_E^0\cr
 	&- \frac{1+\frac{\mathcal{I}}{mc }}{b_0mc^2}\left(k_BT_0+\widetilde{e}(T_0)\right) \int_{\mathbb{R}^3}\int_0^\infty p f\sqrt{F_E^0}\phi(\mathcal{I})\,d\mathcal{I}\frac{dp}{p^0}\cdot pF_E^0+ \frac{1+\frac{\mathcal{I}}{mc^2}}{b_0mc^2}\Gamma_3^*(f)\cdot pF_E^0\cr
 	&=\frac{1+\frac{\mathcal{I}}{mc^2}}{b_0m }\int_{\mathbb{R}^3}\int_0^\infty p   f\sqrt{F_E^0}\left(1+\frac{\mathcal{I}}{mc^2}\right)\phi(\mathcal{I})\,d\mathcal{I}dp\cdot p F_E^0\cr
 	&-c\frac{1+\frac{\mathcal{I}}{mc^2}}{k_BT_0}\int_{\mathbb{R}^3}\int_0^\infty p f\sqrt{F_E^0}\phi(\mathcal{I})\,d\mathcal{I}\frac{dp}{p^0}\cdot pF_E^0+\frac{1+\frac{\mathcal{I}}{mc^2}}{b_0mc^2}\Gamma_3^*(f)\cdot pF_E^0
\end{split} 	\end{eqnarray}
where we used Lemma \ref{computation F_E^0} (5). We combine \eqref{u} and \eqref{flux} to conclude
 	\begin{eqnarray*}
 &&I_3+I_4	=\frac{1+\frac{\mathcal{I}}{mc^2} }{k_BT_0} U \cdot p  F_E^0+p \cdot q \frac{1+\frac{\mathcal{I}}{mc^2}}{b_0 mc^2} F_E^0-p ^0 q^0 \frac{1+\frac{\mathcal{I}}{mc^2}}{b_0 mc^2} F_E^0\cr
 &&\hspace{11.7mm}=c\frac{1+\frac{\mathcal{I}}{mc^2}}{k_BT_0}\int_{\mathbb{R}^3}\int_0^\infty pf\sqrt{F_E^0} \phi(\mathcal{I}) \,d\mathcal{I}\,\frac{dp}{p^0}\cdot p F_E^0 +p\cdot q\frac{1+\frac{\mathcal{I}}{mc^2}}{b_0mc^2} F_E^0 -p^0 q^0\frac{1+\frac{\mathcal{I}}{mc^2}}{b_0mc^2} F_E^0\cr
 &&\hspace{11.7mm}-c\frac{1+\frac{\mathcal{I}}{mc^2}}{k_BT_0}\left\{\frac{\Psi}{2}+\frac{\Psi^3-3\Psi^2}{2(2+\Psi-\Psi^2+2\sqrt{1+\Psi})}\right\}\int_{\mathbb{R}^3}\int_0^\infty pf\sqrt{F_E^0} \phi(\mathcal{I}) \,d\mathcal{I}\,\frac{dp}{p^0}\cdot p F_E^0 \cr
 	&&\hspace{11.7mm}=  \frac{1+\frac{\mathcal{I}}{mc^2}}{b_0m}  \int_{\mathbb{R}^3}\int_0^\infty p   f\sqrt{F_E^0}\left(1+\frac{\mathcal{I}}{mc^2}\right)\phi(\mathcal{I})\,d\mathcal{I}dp \cdot pF_E^0+\Gamma_3(f)\sqrt{F_E^0}.
 	\end{eqnarray*}
In the last identity, $p^0 q^0\frac{1+\frac{\mathcal{I}}{mc^2}}{b_0mc^2} F_E^0$ was absorbed into $\Gamma_3(f)\sqrt{F_E^0}$  since $q^0$ is nonlinear with respect to $f$. Finally,
letting
 	$$
 	\Gamma_4(f)=\frac{1}{\sqrt{F_E^0}}\int_0^1 (1-\theta)\left(n -1,U,\frac{e}{n} -\widetilde{e}(T_0),q^\mu\right)D^2\widetilde{F}(\theta)\left(n -1,U ,\frac{e}{n} -\widetilde{e}(T_0),q^\mu\right)^T\,d\theta
 	$$
 	completes the proof.
 	
 \end{proof}

 In the following proposition, we present the linearization of \eqref{PR}.
 \begin{proposition}\label{lin3}
 Suppose $E(f)(t)$ is sufficiently small. Then, \eqref{PR} can be linearized with respect to the perturbation $f$ as follows
 	\begin{align}\label{LAW}\begin{split}
 	\partial_t f+\hat{p}\cdot\nabla_x f&=\frac{1}{\tau}\left(L(f)+\Gamma (f)\right),\cr
 	f_0(x,p)&=f(0,x,p),
 	\end{split}\end{align}
 	where the linearized operator $L(f)$ and the nonlinear perturbation $\Gamma (f)$ are defined as
 	\begin{align*}
 	L(f)&= P(f)-f , \cr
 	\Gamma (f)&=\frac{U_\mu p^\mu}{c  p^0} \sum_{i=1}^4\Gamma_i (f)+\frac{P(f)-f}{c p^0}  \Phi 
 	\end{align*}
respectively.
 \end{proposition}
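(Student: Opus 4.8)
The plan is to substitute the perturbative ansatz \eqref{decomposition} into \eqref{PR} and then read off the linear and nonlinear parts using the decomposition already established in Lemma \ref{lin2}. Since the global equilibrium $F_E^0$ is a fixed function of $(p,\mathcal{I})$ with no $(t,x)$ dependence, the free-streaming part is immediate: writing $F=F_E^0+f\sqrt{F_E^0}$ gives $\partial_t F+\hat{p}\cdot\nabla_x F=\big(\partial_t f+\hat{p}\cdot\nabla_x f\big)\sqrt{F_E^0}$, so the entire content of the proposition lies in rewriting the right-hand side of \eqref{PR}.

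For the relaxation term, I would first split the bracket as
\[
\left(1-p^\mu q_\mu \frac{1+\frac{\mathcal{I}}{mc^2}}{bmc^2}\right)F_E-F=\left[\left(1-p^\mu q_\mu \frac{1+\frac{\mathcal{I}}{mc^2}}{bmc^2}\right)F_E-F_E^0\right]-\big(F-F_E^0\big),
\]
and observe that $F-F_E^0=f\sqrt{F_E^0}$ by \eqref{decomposition}. The hypothesis that $E(f)(t)$ is small is exactly what makes the right-hand side meaningful: by Proposition \ref{solvability gamma} the equilibrium temperature $T$ is uniquely determined by $e/n$, and by Lemma \ref{n positive} the number density $n$ (hence $e$) stays positive, so that $F_E$, $b$, and $q^\mu$ are well-defined and Lemma \ref{lin2} applies. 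Inserting Lemma \ref{lin2} turns the bracketed difference into $\big(P(f)+\sum_{i=1}^4\Gamma_i(f)\big)\sqrt{F_E^0}$, hence the whole bracket equals $\big(P(f)-f+\sum_{i=1}^4\Gamma_i(f)\big)\sqrt{F_E^0}$ and \eqref{PR} becomes
\[
\big(\partial_t f+\hat{p}\cdot\nabla_x f\big)\sqrt{F_E^0}=\frac{U_\mu p^\mu}{c\tau p^0}\left(P(f)-f+\sum_{i=1}^4\Gamma_i(f)\right)\sqrt{F_E^0}.
\]

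Dividing by $\sqrt{F_E^0}>0$ and using the identity $U_\mu p^\mu=cp^0+\Phi$ — which is precisely the definition of $\Phi$ in \eqref{notation2} — I would split $\frac{U_\mu p^\mu}{cp^0}=1+\frac{\Phi}{cp^0}$ to isolate the purely linear contribution $\frac{U_\mu p^\mu}{cp^0}\big(P(f)-f\big)=\big(P(f)-f\big)+\frac{\Phi}{cp^0}\big(P(f)-f\big)$; collecting terms yields $\partial_t f+\hat{p}\cdot\nabla_x f=\frac{1}{\tau}\big(L(f)+\Gamma(f)\big)$ with $L(f)=P(f)-f$ and $\Gamma(f)=\frac{U_\mu p^\mu}{cp^0}\sum_{i=1}^4\Gamma_i(f)+\frac{P(f)-f}{cp^0}\Phi$, which is the assertion. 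I do not expect a genuine obstacle here: all the analytic work — the Taylor expansion of $F_E$ about $F_E^0$ in the macroscopic fields $(n,U,e/n,q^\mu)$ and the bookkeeping separating linear from nonlinear pieces — has already been carried out in Lemma \ref{lin2}, and the present statement is the short reorganization into advection-plus-relaxation normal form. The only point worth stressing is that the scalar prefactor $\frac{U_\mu p^\mu}{cp^0}$ multiplying the relaxation operator is not discarded: the $cp^0+\Phi$ splitting sends its deviation from $1$ into the nonlinearity as $\frac{\Phi}{cp^0}\big(P(f)-f\big)$, which is genuinely quadratic since $\Phi$ vanishes at $U=0$ and is therefore $O(f)$ by \eqref{u_F}.
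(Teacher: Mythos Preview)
Your proof is correct and follows essentially the same approach as the paper: substitute $F=F_E^0+f\sqrt{F_E^0}$, invoke Lemma \ref{lin2} to obtain $\partial_t f+\hat p\cdot\nabla_x f=\frac{U_\mu p^\mu}{c\tau p^0}\big(P(f)-f+\sum_i\Gamma_i(f)\big)$, and then split the prefactor via $U_\mu p^\mu=cp^0+\Phi$ to isolate $L(f)=P(f)-f$ and push the remainder into $\Gamma(f)$. Your additional remarks on why the smallness of $E(f)(t)$ is needed (well-posedness of $T$, positivity of $n$) are accurate and simply make explicit what the paper leaves implicit.
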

 
 \begin{proof}
 	Inserting $F=F_E^0+f\sqrt{F_E^0}$ into \eqref{PR}, one finds
 	\begin{align*}
 	\partial_t f+\hat{p}\cdot\nabla_x f&=\frac{U_\mu p^\mu}{c\tau p^0}\frac{1}{\sqrt{F_E^0}}\left\{\left(1-p^\mu q_\mu\frac{1+\frac{\mathcal{I}}{mc^2}}{bmc^2}\right)F_E-F_E^0-f\sqrt{F_E^0}\right\}.
 	\end{align*}
Then, it follows from Lemma \ref{lin2} that
  	 \begin{align*}
	\partial_t f+\hat{p}\cdot\nabla_x f=\frac{U_\mu p^\mu}{c\tau p^0}\Big(P(f)-f+\sum_{i=1}^4\Gamma_i (f)\Big)
  \end{align*}
which combined with $\eqref{notation2}_3$ gives the desired result.	
 \end{proof}

\subsection{Analysis of the linearized operator $L$}
 Let $N$ be a five dimensional space spanned by $$\left\{\sqrt{F_E^0},\Big(1+\frac{\mathcal{I}}{mc^2}\Big)p^\mu \sqrt{F_E^0}\right\}.$$
 \begin{lemma}\label{ortho}
 	$P$ is an orthonormal projection from $L_{p,I}^2(\mathbb{R}^3)$ onto $N$.
 \end{lemma}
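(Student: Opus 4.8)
The plan is to verify that $\{e_1,\dots,e_5\}$ is an orthonormal basis of $N$ with respect to $\langle\cdot,\cdot\rangle_{L^2_{p,\mathcal{I}}}$; once this is in hand, the very formula $P(f)=\sum_{i=1}^5\langle f,e_i\rangle_{L^2_{p,\mathcal{I}}}e_i$ defining $P$ is the standard expression for the orthogonal projection of $L^2_{p,\mathcal{I}}(\mathbb{R}^3)$ onto $N$, and the statement follows at once.

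First I would note that every $e_i$ lies in $N$: indeed $e_1=\sqrt{F_E^0}$, the $e_{2,3,4}$ are scalar multiples of $\left(1+\tfrac{\mathcal{I}}{mc^2}\right)p^i\sqrt{F_E^0}$, and $e_5$ is a linear combination of $\left(1+\tfrac{\mathcal{I}}{mc^2}\right)cp^0\sqrt{F_E^0}$ and $\sqrt{F_E^0}$. Since the five functions $\sqrt{F_E^0}$ and $\left(1+\tfrac{\mathcal{I}}{mc^2}\right)p^\mu\sqrt{F_E^0}$ $(\mu=0,1,2,3)$ are manifestly linearly independent, $\dim N=5$.

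The substantive part is the orthonormality. All mixed inner products whose integrand is odd in some momentum component vanish by the spherical symmetry of $F_E^0$ in $p$: this disposes of $\langle e_1,e_{2,3,4}\rangle_{L^2_{p,\mathcal{I}}}$, of $\langle e_{2,3,4},e_5\rangle_{L^2_{p,\mathcal{I}}}$, and of $\langle e_i,e_j\rangle_{L^2_{p,\mathcal{I}}}$ for $i\neq j$ in $\{2,3,4\}$. For the remaining entries I would argue as follows. One has $\|e_1\|^2_{L^2_{p,\mathcal{I}}}=\int_{\mathbb{R}^3}\int_0^\infty F_E^0\phi(\mathcal{I})\,d\mathcal{I}dp=1$ by the change of variables $\tfrac{p}{mc}\to p$ together with the defining relation for $e^{-1+\frac{m}{k_B}g_{r0}/T_0}$. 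Next, $\|e_{2,3,4}\|^2_{L^2_{p,\mathcal{I}}}=\tfrac{1}{b_0m}\int_{\mathbb{R}^3}\int_0^\infty (p^i)^2F_E^0\left(1+\tfrac{\mathcal{I}}{mc^2}\right)^2\phi(\mathcal{I})\,d\mathcal{I}dp=1$ is precisely Lemma \ref{computation F_E^0}(3). The cross term $\langle e_1,e_5\rangle_{L^2_{p,\mathcal{I}}}$ equals, up to a positive constant, $\int_{\mathbb{R}^3}\int_0^\infty\left\{cp^0\left(1+\tfrac{\mathcal{I}}{mc^2}\right)-\widetilde{e}(T_0)\right\}F_E^0\phi(\mathcal{I})\,d\mathcal{I}dp$, which vanishes since $\int cp^0\left(1+\tfrac{\mathcal{I}}{mc^2}\right)F_E^0\phi\,d\mathcal{I}dp=\widetilde{e}(T_0)$ by Lemma \ref{computation F_E^0}(2) and $\int F_E^0\phi\,d\mathcal{I}dp=1$. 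Finally, $\|e_5\|^2_{L^2_{p,\mathcal{I}}}=\tfrac{1}{k_BT_0^2\{\widetilde{e}\}'(T_0)}\int_{\mathbb{R}^3}\int_0^\infty\left\{cp^0\left(1+\tfrac{\mathcal{I}}{mc^2}\right)-\widetilde{e}(T_0)\right\}^2F_E^0\phi(\mathcal{I})\,d\mathcal{I}dp=1$, which is exactly identity \eqref{e5} read at the global equilibrium, where $n=1$, $e/n=\widetilde{e}(T_0)$, $T=T_0$, and $F_E(g_r,0,T)=F_E^0$; the same identity shows $\{\widetilde{e}\}'(T_0)>0$, so $e_5$ is well defined.

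Having established that $e_1,\dots,e_5$ are five orthonormal, hence linearly independent, elements of the five-dimensional space $N$, they form an orthonormal basis of $N$. Consequently $P$ is linear, $P^2=P$, $P$ is self-adjoint for $\langle\cdot,\cdot\rangle_{L^2_{p,\mathcal{I}}}$, and $P$ restricts to the identity on $N$ (so its range is exactly $N$); that is, $P$ is the orthogonal projection of $L^2_{p,\mathcal{I}}(\mathbb{R}^3)$ onto $N$. The only step requiring genuine care is the normalization $\|e_5\|^2_{L^2_{p,\mathcal{I}}}=1$: the apparently unusual constant $k_BT_0^2\{\widetilde{e}\}'(T_0)$ is chosen precisely so that this holds, and verifying it amounts to recognizing the integral as the right-hand side of \eqref{e5} specialized to the global Maxwellian; everything else reduces to spherical symmetry, Lemma \ref{computation F_E^0}, and the normalization $\int F_E^0\phi\,d\mathcal{I}dp=1$.
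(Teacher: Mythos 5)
Your proposal is correct and mirrors the paper's argument: reduce the claim to orthonormality of $\{e_1,\dots,e_5\}$, obtain the normalizations of $e_1$ from the defining relation for $e^{-1+\frac{m}{k_B}g_{r0}/T_0}$, of $e_{2,3,4}$ from Lemma \ref{computation F_E^0}(3), of $e_5$ from \eqref{e5} specialized to the global equilibrium, and conclude by the standard fact that $\sum_i\langle f,e_i\rangle e_i$ is the orthogonal projection onto the span of an orthonormal set. The only difference is cosmetic: you spell out the vanishing of the cross terms (spherical symmetry for odd integrands, and Lemma \ref{computation F_E^0}(2) plus $\int F_E^0\phi\,d\mathcal{I}dp=1$ for $\langle e_1,e_5\rangle$), which the paper omits as routine.
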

 \begin{proof}
 	It is enough to show that $\{e_i\}$ $(i=1,\cdots,5)$ forms an orthonormal basis with respect to the inner product $\langle \cdot,\cdot \rangle_{L^2_{p,\mathcal{I}}}$.
 	\newline
 	$\bullet$ $\|e_1\|_{L^2_{p,I}}=1$: By a definition of $F_E^0$, it is straightforward  that
 	\begin{align*}
 	\langle e_1,e_1\rangle_{L^2_{p,\mathcal{I}}} &= \int_{\mathbb{R}^3}\int_0^\infty   F_E^0  \phi(\mathcal{I}) \,d\mathcal{I}\,dp\cr
 	&=\frac{\int_{\mathbb{R}^3}\int_0^\infty   e^{ -\left(1+\frac{\mathcal{I}}{mc^2}\right)\frac{cp^0}{k_B T_0}  }  \phi(\mathcal{I}) \,d\mathcal{I}\,dp}{(mc)^3\int_{\mathbb{R}^3}\int_0^\infty  e^{-\left(mc^2+ \mathcal{I} \right)\frac{1}{k_B T_0}\sqrt{1+|p|^2}}\phi(\mathcal{I})\,d \mathcal{I}dp} \cr
 	&=1.
 	\end{align*}
 	$\bullet$ $\|e_{i+1}\|_{L^2_{p,I}}=1$ $(i=1,2,3)$: It follows from Lemma \ref{computation F_E^0} (3) that
 	\begin{align*}
  	\langle e_{i+1},e_{i+1}\rangle_{L^2_{p,\mathcal{I}}}&=\frac{1 }{b_0m}\int_{\mathbb{R}^3}\int_0^\infty (p^i)^2   F_E^0\left(1+\frac{\mathcal{I}}{mc^2}\right)^2\phi(\mathcal{I})\,d\mathcal{I}dp\cr
  &=1.
 \end{align*}
\noindent\newline
 	$\bullet$ $\|e_5\|_{L^2_{p,I}}=1$: Inserting $(n,0,T)=(1,0,T_0)$ into \eqref{e5}, one finds
 	 \begin{align*}
 	  \left\{\widetilde{e}\right\}^{\prime}(T_0)&= \frac{1}{k_BT^2_0}\int_{\mathbb{R}^3}\int_0^\infty \left\{cp^0 \left(1+\frac{\mathcal{I}}{mc^2}\right)- \widetilde{e}(T_0) \right\}^2F_E^0\phi(\mathcal{I})\,d\mathcal{I}dp 
 	\end{align*} 
 where we used 
 $$
 \frac{e}{n}\Big|_{T=T_0}= \widetilde{e}(T)\big|_{T=T_0}=\widetilde{e}(T_0).
 $$
Using this, we have
 	\begin{align*}
 	\langle e_5,e_5\rangle_{L^2_{p,\mathcal{I}}}&= \frac{1}{k_BT_0^2\left\{\widetilde{e}\right\}^{\prime}(T_{0})}\int_{\mathbb{R}^3}\int_0^\infty\left\{cp^0\left(1+\frac{\mathcal{I}}{mc^2}\right)-\widetilde{e}(T_0)\right\}^2 F_E^0\phi(\mathcal{I})\,d\mathcal{I}dp\cr
 	&=1.	
  	\end{align*}
 	\noindent\newline 	
 	$\bullet$ $\langle e_i,e_j\rangle_{L^2_{p,\mathcal{I}}}=0$ $(i\neq j)$: Since the orthogonality can be proved in the same manner, we omit it.
 	
 \end{proof}
 \begin{proposition}\label{pro}
 	The linearized operator $L$ satisfies the following properties:
 	\begin{enumerate}
 		\item $Ker(L)=N$.
 		\item $L$ is dissipative in the following sense:
 		$$
 		\langle L(f),f\rangle_{L^2_{p,\mathcal{I}}}= -\|\{I-P\}(f)\|^2_{L^2_{p,\mathcal{I}}} \le 0.
 		$$
 	\end{enumerate}
 \end{proposition}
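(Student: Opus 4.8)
The plan is to deduce both statements directly from Lemma~\ref{ortho}, which asserts that $P$ is the \emph{orthogonal} projection of $L^2_{p,\mathcal{I}}(\mathbb{R}^3)$ onto the five-dimensional subspace $N$. Once this is in hand, everything reduces to the elementary algebra of orthogonal projections together with the Pythagorean identity $\|f\|^2_{L^2_{p,\mathcal{I}}}=\|P(f)\|^2_{L^2_{p,\mathcal{I}}}+\|\{I-P\}f\|^2_{L^2_{p,\mathcal{I}}}$.

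For part (1), I would start from the formula $L(f)=P(f)-f$ recorded in Proposition~\ref{lin3}, so that $f\in\mathrm{Ker}(L)$ if and only if $P(f)=f$. If $f\in N$, then since $P$ acts as the identity on its range $N$ (Lemma~\ref{ortho}), we get $P(f)=f$, hence $f\in\mathrm{Ker}(L)$; conversely $P(f)=f$ forces $f\in\mathrm{Range}(P)=N$. This yields $\mathrm{Ker}(L)=N$.

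For part (2), I would write $f=P(f)+\{I-P\}f$ and use that these two pieces are orthogonal in $L^2_{p,\mathcal{I}}$ (again Lemma~\ref{ortho}), so that $\langle P(f),f\rangle_{L^2_{p,\mathcal{I}}}=\|P(f)\|^2_{L^2_{p,\mathcal{I}}}$. Combining this with the Pythagorean identity above gives
\[
\langle L(f),f\rangle_{L^2_{p,\mathcal{I}}}=\langle P(f),f\rangle_{L^2_{p,\mathcal{I}}}-\|f\|^2_{L^2_{p,\mathcal{I}}}=-\|\{I-P\}f\|^2_{L^2_{p,\mathcal{I}}}\le 0,
\]
which is the asserted dissipativity.

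I do not expect any real obstacle here: the genuine content has already been carried out in Lemma~\ref{computation F_E^0} and Lemma~\ref{ortho} (the verification that $\{e_i\}_{i=1}^5$ is orthonormal). The only point deserving a remark is that each $e_i$ indeed lies in $L^2_{p,\mathcal{I}}$, so that $P$ is well defined and bounded on $L^2_{p,\mathcal{I}}$ — but this is immediate from the moment bound \eqref{phi condition}, since every $e_i$ is a polynomial in $p^0$ and $\mathcal{I}$ times $\sqrt{F_E^0}$.
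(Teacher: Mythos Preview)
Your proposal is correct and matches the paper's approach: the paper itself says the result is ``straightforward by the definition of $L$'' and omits the details, and your argument via Lemma~\ref{ortho} and the elementary identity $\langle P(f),f\rangle_{L^2_{p,\mathcal{I}}}=\|P(f)\|^2_{L^2_{p,\mathcal{I}}}$ is exactly the intended straightforward computation.
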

\begin{proof}
Since it is straightforward by the definition of $L$, we omit the proof. 
\end{proof}
 \noindent\newline
\section{Estimates for macroscopic fields and nonlinear perturbations}

To deal with the macroscopic fields, we first estimate $\Psi, \Psi_1,\Phi$ and $\Phi_1$ whose definitions are given in \eqref{notation2}. For brevity, we set $\tau=1$ and use the generic constant $C$ which can change line by line but does not affect the proof of Theorem \ref{main3}.
\begin{lemma}\label{lem22}
	Suppose $E(f)(t)$ is sufficiently small. Then $\Psi, \Psi_1,\Phi$ and $\Phi_1$ satisfy
	\begin{enumerate}
		\item $\displaystyle |\partial^\alpha \Psi|+|\partial^\alpha \Phi|\le Cp^0 \sum_{|\alpha_1|\le|\alpha|}\|\partial^{\alpha_1} f\|_{L^2_{p,\mathcal{I}}},$
		\item $\displaystyle |\partial^\alpha \Psi_1|+|\partial^\alpha \Phi_1|\le C p^0 \sqrt{E(f)(t)}\sum_{|{\alpha_1}|\le|\alpha|}\|\partial^{\alpha_1} f\|_{L^2_{p,\mathcal{I}}} .$
	\end{enumerate}
\end{lemma}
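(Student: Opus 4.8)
The plan is to observe that every quantity appearing in \eqref{notation2} is built, in a smooth way, out of finitely many \emph{macroscopic moments} of $f$, i.e.\ expressions of the form $\int_{\mathbb{R}^3}\int_0^\infty w(p/p^0)\,\partial^\alpha f\,\sqrt{F_E^0}\,\phi(\mathcal{I})\,d\mathcal{I}\,dp$ with $w$ one of the bounded functions $1$ or $p^i/p^0$. Since $|p^i/p^0|\le 1$ and, by Lemma \ref{ortho}, $\|\sqrt{F_E^0}\|_{L^2_{p,\mathcal{I}}}=1$, the Cauchy--Schwarz inequality in $L^2_{p,\mathcal{I}}$ gives the pointwise-in-$(t,x)$ bound $|\text{(moment)}|\le C\|\partial^\alpha f(t,x)\|_{L^2_{p,\mathcal{I}}}$, and differentiation commutes with the $(p,\mathcal{I})$-integration; this already controls all linear contributions. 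For the nonlinear contributions I would additionally use the Sobolev embedding $H^2(\mathbb{R}^3_x)\hookrightarrow L^\infty(\mathbb{R}^3_x)$ together with Minkowski's integral inequality: for $|\beta|\le N-2$ one has $\|\partial^\beta f(t,x)\|_{L^2_{p,\mathcal{I}}}\le\|\partial^\beta f\|_{L^\infty_xL^2_{p,\mathcal{I}}}\le C\|\partial^\beta f\|_{L^2_{p,\mathcal{I}}H^2_x}\le C\sqrt{E(f)(t)}$. In particular $|\Psi|,|\Psi_1|,|n-1|,|U|\le C\sqrt{E(f)(t)}$ pointwise, so $n=\sqrt{1+\Psi}$, $U^0=\sqrt{c^2+|U|^2}$ and $1/n$ are smooth functions of the moments with bounded derivatives on the relevant range.

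I would then proceed in the following order. First $\Psi$ and $\Psi_1$: each is a polynomial of degree $\le 2$ in the zeroth-order moments $\int f\sqrt{F_E^0}\phi\,d\mathcal{I}\,dp$ and $\int p^if\sqrt{F_E^0}\phi\,\frac{dp}{p^0}$. Applying the Leibniz rule to $\partial^\alpha$, every resulting term is either a single factor $\partial^\alpha(\text{moment})$, bounded by $C\|\partial^\alpha f\|_{L^2_{p,\mathcal{I}}}$, or a product $\partial^{\alpha_1}(\text{moment})\,\partial^{\alpha_2}(\text{moment})$ with $\alpha_1+\alpha_2=\alpha$, in which case the factor of smaller order has order $\le\lfloor|\alpha|/2\rfloor\le N-2$ for $N\ge 3$, hence is $\le C\sqrt{E(f)(t)}$ in $L^\infty_x$ while the other is $\le C\|\partial^{\alpha_2}f\|_{L^2_{p,\mathcal{I}}}$; this gives (1) for $\Psi$ (the factor $p^0\ge mc$ on the right being harmless) and, since $\Psi_1$ has no linear part, the sharper bound (2) for $\Psi_1$. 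Next, for $\Phi$ I use $U_\mu p^\mu=U^0p^0-U\cdot p$ to write $\Phi=(\sqrt{c^2+|U|^2}-c)p^0-U\cdot p$, so that $|\partial^\alpha\Phi|\le p^0\big(|\partial^\alpha(\sqrt{c^2+|U|^2}-c)|+|\partial^\alpha U|\big)$ using $|p|\le p^0$; since $U$ is a vector of linear moments divided by $n$, and $\sqrt{c^2+|U|^2}-c$ is a smooth function of $U$, the Leibniz rule together with the Fa\`a di Bruno formula reduces both quantities to $C\sum_{|\alpha_1|\le|\alpha|}\|\partial^{\alpha_1}f\|_{L^2_{p,\mathcal{I}}}$, yielding (1) for $\Phi$. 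Finally, for $\Phi_1$ I would record the identity $c\int p^if\sqrt{F_E^0}\phi\,\frac{dp}{p^0}=nU^i$ (the $F_E^0$-part of the integral vanishing by oddness in $p$), which gives $-U\cdot p+c\sum_ip^i\int p^if\sqrt{F_E^0}\phi\,\frac{dp}{p^0}=(n-1)\,U\cdot p$ and hence $\Phi_1=(\sqrt{c^2+|U|^2}-c)p^0+(n-1)\,U\cdot p$; since both $\sqrt{c^2+|U|^2}-c=|U|^2/(\sqrt{c^2+|U|^2}+c)$ and $(n-1)U$ are \emph{quadratically} small, repeating the Leibniz/Fa\`a di Bruno argument and peeling off one factor of order $\le N-2$ into $L^\infty_x$ produces the extra $\sqrt{E(f)(t)}$ needed for (2).

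The routine part is the Cauchy--Schwarz bound on a single moment; the delicate point -- and the main obstacle -- is the bookkeeping in differentiating the compositions $\Psi\mapsto\sqrt{1+\Psi}$, $U\mapsto\sqrt{c^2+|U|^2}$ and $n\mapsto 1/n$: one must verify that in each term of the fully expanded Leibniz/Fa\`a di Bruno sum all but at most one of the factors carries at most $N-2$ derivatives (true for $N\ge 3$ since the total order is $\le N$), so that the bound $\|\cdot\|_{L^\infty_xL^2_{p,\mathcal{I}}}\le C\sqrt{E(f)(t)}$ applies to them while the single high-order factor is kept in $L^2_{p,\mathcal{I}}$. Once this is organized, assembling (1) and (2) is immediate.
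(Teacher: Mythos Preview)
Your argument is correct and, for $\Psi$ and $\Psi_1$, essentially identical to the paper's: both apply the Leibniz rule to the quadratic moment expressions, bound each moment by Cauchy--Schwarz in $L^2_{p,\mathcal{I}}$, and absorb the lower-order factor into $\sqrt{E(f)(t)}$ via the embedding $H^2_x\hookrightarrow L^\infty_x$.

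For $\Phi$ and $\Phi_1$ you take a slightly different route. The paper expands $\Phi$ by writing $U^\mu=\frac{c}{n}\int p^\mu F\,\phi\,d\mathcal{I}\,\frac{dp}{p^0}$ and then inserting the algebraic identity for $1/n=1/\sqrt{1+\Psi}$ directly into the moment formula (see \eqref{Phi form}); the resulting expression is estimated term by term using the already-established bounds on $\Psi,\Psi_1$. You instead decompose $\Phi=(U^0-c)p^0-U\cdot p$ and control $\partial^\alpha U$ and $\partial^\alpha(U^0-c)$ separately via Fa\`a di Bruno, which is equivalent but organizes the composition with $1/n$ a bit more cleanly. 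The real gain in your approach is the identity $\Phi_1=(U^0-c)\,p^0+(n-1)\,U\cdot p$, obtained from $c\int p^i f\sqrt{F_E^0}\,\phi\,d\mathcal{I}\,\frac{dp}{p^0}=nU^i$: this makes the quadratic smallness of $\Phi_1$ completely transparent, whereas the paper simply asserts that $\Phi_1$ is handled ``in the same manner as in the case of $\Phi$'' without isolating why the extra factor $\sqrt{E(f)(t)}$ appears. Your bookkeeping remark---that in any Leibniz/Fa\`a di Bruno product of total order $\le N$ with $N\ge 3$ at most one factor can carry more than $N-2$ derivatives---is exactly the mechanism the paper uses implicitly when it invokes the Sobolev embedding on ``lower order terms''.
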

\begin{proof}
	$\bullet$ Estimates of $\Psi$ and $\Psi_1$: Recalling \eqref{notation2}, we see that
	\begin{align*}
		\partial^\alpha\Psi&=2\int_{\mathbb{R}^3}\int_0^\infty \partial^\alpha f\sqrt{F_E^0}  \phi(\mathcal{I}) \,d\mathcal{I}\,dp+	\partial^\alpha\left\{\int_{\mathbb{R}^3}\int_0^\infty f\sqrt{F_E^0}  \phi(\mathcal{I}) \,d\mathcal{I}\,dp\right\}^2\cr
		&-\sum_{i=1}^3 \partial^\alpha\left\{\int_{\mathbb{R}^3}\int_0^\infty p^if\sqrt{F_E^0}  \phi(\mathcal{I}) \,d\mathcal{I}\,\frac{dp}{p^0}\right\}^2,
	\end{align*}
and it follows from H\"{o}lder's inequality that
	\begin{align}\label{Psi}
	\begin{split}
		|\partial^\alpha \Psi|	&\le C\biggl(\|\partial^\alpha f\|_{L^2_{p,\mathcal{I}}}+  \sum_{|{\alpha_1}|\le |\alpha|}\|\partial^{\alpha_1} f\|_{L^2_{p,\mathcal{I}}}\|\partial^{\alpha-\alpha_1}f\|_{L^2_{p,\mathcal{I}}}\biggl).
\end{split}	\end{align}
	Applying the Sobolev embedding $H_x^2\subseteq L^{\infty}_x$ to lower order terms of \eqref{Psi}, we get
	\begin{align}\label{H}
		\begin{split}
			|\partial^\alpha \Psi| &\le C\biggl(\|\partial^\alpha f\|_{L^2_{p,\mathcal{I}}}+ \sqrt{E(f)(t)}\sum_{|{\alpha_1}|\le |\alpha|}\|\partial^{\alpha_1} f\|_{L^2_{p,\mathcal{I}}}\biggl)\le C\sum_{|{\alpha_1}|\le |\alpha|}\|\partial^{\alpha_1} f\|_{L^2_{p,\mathcal{I}}}
	\end{split}\end{align}
for sufficiently small $E(f)(t)$. In the same manner, one can have
	\begin{align}\label{Psi1}
		\begin{split}
			|\partial^\alpha \Psi_1|	&\le C\sum_{|{\alpha_1}|\le|\alpha|}\|\partial^{\alpha_1} f\|_{L^2_{p,\mathcal{I}}}\|\partial^{\alpha-{\alpha_1}} f\|_{L^2_{p,\mathcal{I}}}\cr
			&\le C\sqrt{E(f)(t)}\sum_{|{\alpha_1}|\le |\alpha|}\|\partial^{\alpha_1} f\|_{L^2_{p,\mathcal{I}}}.
	\end{split} \end{align}
	$\bullet$ Estimates of $\Phi$: Observe from \eqref{macroscopic fields} and $\eqref{notation2}_{3}$ that
	\begin{align}\label{phi 1}
		\begin{split}
			\Phi=U_\mu p^\mu-cp^0=\frac{c}{n}p^\mu\int_{\mathbb{R}^3}\int_0^\infty p_\mu F\phi(\mathcal{I}) \,d\mathcal{I}\,\frac{dp}{p^0}  -cp^0.
	\end{split}\end{align} 
Inserting $F=F_E^0+f\sqrt{F_E^0}$, then \eqref{phi 1} becomes 
\begin{align*}
	\Phi&=\frac{c}{n}p^\mu\int_{\mathbb{R}^3}\int_0^\infty p_\mu F_E^0\phi(\mathcal{I}) \,d\mathcal{I}\,\frac{dp}{p^0}+\frac{c}{n}p^\mu\int_{\mathbb{R}^3}\int_0^\infty p_\mu f\sqrt{F_E^0}\phi(\mathcal{I}) \,d\mathcal{I}\,\frac{dp}{p^0}  -cp^0\cr
	&=\frac{c}{n}\left( p^0+p^\mu\int_{\mathbb{R}^3}\int_0^\infty p_\mu f\sqrt{F_E^0}\phi(\mathcal{I}) \,d\mathcal{I}\,\frac{dp}{p^0}\right)-cp^0	
\end{align*}
where we used
$$
\int_{\mathbb{R}^3}\int_0^\infty  F_E^0\phi(\mathcal{I}) \,d\mathcal{I}\,dp=1,\qquad \int_{\mathbb{R}^3}\int_0^\infty  p F_E^0\phi(\mathcal{I}) \,d\mathcal{I}\,\frac{dp}{p^0}=0.
$$
	Now we use \eqref{1/n=} to expand $1/n$ as
	\begin{align}\label{Phi form}\begin{split}
		\Phi&= \left(	1-\int_{\mathbb{R}^3}\int_0^\infty f\sqrt{F_E^0}  \phi(\mathcal{I}) \,d\mathcal{I}\,dp-\frac{\Psi_1}{2}	-\frac{\Psi^3-3\Psi^2}{2(2+\Psi-\Psi^2+2\sqrt{1+\Psi})}\right)\cr
		&\times \left(cp^0+	cp^\mu\int_{\mathbb{R}^3}\int_0^\infty p_\mu f\sqrt{F_E^0}\phi(\mathcal{I}) \,d\mathcal{I}\,\frac{dp}{p^0}\right)-cp^0 \cr
		&=cp^\mu\int_{\mathbb{R}^3}\int_0^\infty p_\mu f\sqrt{F_E^0}\phi(\mathcal{I}) \,d\mathcal{I}\,\frac{dp}{p^0}+\left(cp^0+	cp^\mu\int_{\mathbb{R}^3}\int_0^\infty p_\mu f\sqrt{F_E^0}\phi(\mathcal{I}) \,d\mathcal{I}\,\frac{dp}{p^0}\right)\cr
		&\times \left( -\int_{\mathbb{R}^3}\int_0^\infty f\sqrt{F_E^0}  \phi(\mathcal{I}) \,d\mathcal{I}\,dp-\frac{\Psi_1}{2}	-\frac{\Psi^3-3\Psi^2}{2(2+\Psi-\Psi^2+2\sqrt{1+\Psi})} \right).
\end{split}	\end{align}
Since it follows from \eqref{H}, \eqref{Psi1} and the Sobolev embedding $H_x^2\subseteq L^{\infty}_x$ that 
\begin{equation}\label{phi psi}
|\Psi|\le C\|f\|_{L^2_{p,\mathcal{I}}}< C\sqrt{E(f)(t)}, \qquad | \Psi_1|\le C \|  f\|_{L^2_{p,\mathcal{I}}}
\end{equation}
for sufficiently small $E(f)(t)$,  the last identity of \eqref{Phi form} leads to
	\begin{align*}
		|\Phi|	&\le Cp^0\|f\|_{L^2_{p,\mathcal{I}}}.
		\end{align*}	
For $\alpha\neq 0$, we have from \eqref{Phi form} that
\begin{align*}
		\partial^\alpha \Phi&=cp^\mu\int_{\mathbb{R}^3}\int_0^\infty p_\mu \partial^\alpha f\sqrt{F_E^0}\phi(\mathcal{I}) \,d\mathcal{I}\,\frac{dp}{p^0}+\sum_{|\alpha_1|\le |\alpha|}C_{\alpha_1}\partial^{\alpha_1}\biggl\{ cp^0+	cp^\mu\int_{\mathbb{R}^3}\int_0^\infty p_\mu f\sqrt{F_E^0}\phi(\mathcal{I}) \,d\mathcal{I}\,\frac{dp}{p^0}\biggl\}\cr
	&\times \partial^{\alpha-\alpha_1}\left\{  -\int_{\mathbb{R}^3}\int_0^\infty f\sqrt{F_E^0}  \phi(\mathcal{I}) \,d\mathcal{I}\,dp-\frac{\Psi_1}{2}	-\frac{\Psi^3-3\Psi^2}{2(2+\Psi-\Psi^2+2\sqrt{1+\Psi})} \right\}.
\end{align*}
Using H\"{o}lder's inequality and \eqref{Psi1}, we have
	\begin{align}\label{partial phi}
		\begin{split}
 |\partial^\alpha\Phi|&\le  Cp^0\| \partial^\alpha f\|_{L^2_{p,\mathcal{I}}}+Cp^0\sum_{|\alpha_1|\le |\alpha|}   \left(  1+\| \partial^{\alpha_1} f\|_{L^2_{p,\mathcal{I}}} \right)\biggl(\| \partial^{\alpha-\alpha_1} f\|_{L^2_{p,\mathcal{I}}}\cr
 &  +\sqrt{E(f)(t)}\sum_{|\alpha_2|\le|\alpha-\alpha_1|}\| \partial^{\alpha_2} f\|_{L^2_{p,\mathcal{I}}}+\left|\frac{\mathbb{P}\left(\Psi,\cdots,\partial^{\alpha-\alpha_1} \Psi,\sqrt{1+\Psi}\right)}{\mathbb{M}\left(2+\Psi-\Psi^2+2\sqrt{1+\Psi},\sqrt{1+\Psi} \right)}\right|\biggl)
	\end{split}\end{align}
 where $\mathbb{P}$ and $\mathbb{M}$ denote the homogeneous generic polynomial and monomial respectively. Using \eqref{Psi} and the Sobolev embedding $H_x^2\subseteq L^{\infty}_x$, the last term on the r.h.s of \eqref{partial phi} can be estimated explicitly as 
 	\begin{equation}\label{1/1+pi}
\left|\frac{\mathbb{P}(\Psi,\cdots,\partial^{\alpha-\alpha_1} \Psi,\sqrt{1+\Psi},\cdots,\partial^{\alpha-\alpha_1}\sqrt{1+\Psi})}{(2+\Psi-\Psi^2+2\sqrt{1+\Psi})^{2^{|\alpha-\alpha_1|}}}\right| \le C\sum_{|\alpha_2|\le |{\alpha-\alpha_1}|}\|\partial^{\alpha_2} f\|_{L^2_{p,\mathcal{I}}}.
	\end{equation}
Inserting \eqref{1/1+pi} into \eqref{partial phi}, one finds
	\begin{align}\label{leads to}
		|\partial^\alpha \Phi|\le Cp^0\sum_{|{\alpha_1}|\le |\alpha|}\|\partial^{\alpha_1} f\|_{L^2_{p,\mathcal{I}}}.
	\end{align}
	\newline
	$\bullet$ Estimates of $\Phi_1$: In the same manner as in the case of $\Phi$, one can have
	\begin{align}\label{obtain}
		|\partial^\alpha \Phi_1|\le Cp^0\sqrt{E(f)(t)}\sum_{|{\alpha_1}|\le |\alpha|}\|\partial^{\alpha_1} f\|_{L^2_{p,\mathcal{I}}}.
	\end{align}
	As a result, (1) is obtained by  \eqref{Psi} and \eqref{leads to}, and combining \eqref{Psi1} and \eqref{obtain} gives (2).	
\end{proof}

\begin{lemma}\label{lem2} 	Suppose $E(f)(t)$ is sufficiently small. Then we have
	\begin{enumerate}
		\item $	\displaystyle\ |\partial^\alpha \{n -1\}|+|\partial^\alpha U |+\left|\partial^\alpha \Big\{\frac{e}{n} -\widetilde{e}(T_0)\Big\}\right|\le C\sum_{|{\alpha_1}|\le|\alpha|}\|\partial^{\alpha_1} f\|_{L^2_{p,\mathcal{I}}}.$
		\item $\displaystyle|\partial^{\alpha} \{U ^0-c\}|\le C\sqrt{E(f)(t)}\sum_{|{\alpha_1}|\le|\alpha|}\|\partial^{\alpha_1} f\|_{L^2_{p,\mathcal{I}}}.$
	\end{enumerate}
\end{lemma}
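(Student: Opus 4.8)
The plan is to read off explicit representations of $n$, $U$ and $e/n$ in terms of the perturbation $f$ from the linearization carried out in Lemma \ref{lin2}, then to differentiate them term by term and control each resulting piece with Lemma \ref{lem22}, the Sobolev embedding $H^2_x\subseteq L^\infty_x$, and the following elementary moment bound: since $|p|\le p^0$, the assumption \eqref{phi condition} guarantees that $\mathbb{P}(p,p^0,\mathcal{I})\sqrt{F_E^0}\in L^2_{p,\mathcal{I}}$ for every polynomial $\mathbb{P}$, so the Cauchy--Schwarz inequality yields
\[
\left|\,\partial^\alpha\!\!\int_{\mathbb{R}^3}\!\int_0^\infty \mathbb{P}(p,p^0,\mathcal{I})\,f\,\sqrt{F_E^0}\,\phi(\mathcal{I})\,d\mathcal{I}\,dp\,\right|\le C\,\|\partial^\alpha f\|_{L^2_{p,\mathcal{I}}},
\]
and likewise with $dp$ replaced by $dp/p^0$ since $1/p^0$ is bounded; here $\partial^\alpha$ passes under the integral because the macroscopic fields do not depend on $p$. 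First I would treat $n-1$: from \eqref{n=} we have $n=\sqrt{1+\Psi}$, so \eqref{route pi} gives $n-1=\frac{\Psi}{2}-\frac{\Psi^2}{2(2+\Psi+2\sqrt{1+\Psi})}$; applying $\partial^\alpha$ with the Leibniz rule, the denominator is bounded below (as $|\Psi|$ is small, cf.\ \eqref{phi psi}) and its derivatives are handled exactly as in \eqref{1/1+pi}, so Lemma \ref{lem22}(1) yields $|\partial^\alpha(n-1)|\le C\sum_{|\alpha_1|\le|\alpha|}\|\partial^{\alpha_1}f\|_{L^2_{p,\mathcal{I}}}$. For $U$ I would use the representation \eqref{u_F}, in which the scalar factor $1/n$ is given by \eqref{1/n=} and has the same rational structure (estimated in the same way), while the moment $\int pf\sqrt{F_E^0}\phi\,d\mathcal{I}\,dp/p^0$ and its $\partial^\alpha$-derivatives are controlled by the moment bound above; the Leibniz rule together with $H^2_x\subseteq L^\infty_x$ on the lower-order factors then gives the claimed estimate for $|\partial^\alpha U|$.

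Next, for $e/n-\widetilde{e}(T_0)$ I would invoke the decomposition \eqref{e-e0}, which expresses it as the linear moment $\int\big\{cp^0\big(1+\frac{\mathcal{I}}{mc^2}\big)-\widetilde{e}(T_0)\big\}f\sqrt{F_E^0}\,\phi\,d\mathcal{I}dp$, minus a product of two linear moments of $f$, plus the remainder $R_{I_2}$. The first term is bounded by $C\|\partial^\alpha f\|_{L^2_{p,\mathcal{I}}}$ directly from the moment bound; the product term, after the Leibniz rule and $H^2_x\subseteq L^\infty_x$, contributes at most $C\sqrt{E(f)(t)}\sum_{|\alpha_1|\le|\alpha|}\|\partial^{\alpha_1}f\|_{L^2_{p,\mathcal{I}}}$; and $R_{I_2}$, being built out of $\Psi_1$, the rational remainder terms in $\Psi$, the quantity $\Phi$ and moments of $F=F_E^0+f\sqrt{F_E^0}$, is handled by Lemma \ref{lem22} together with the moment bound and likewise carries a factor $\sqrt{E(f)(t)}$. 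Adding these three contributions proves part (1).

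Finally, for part (2) I would write $U^0-c=\sqrt{c^2+|U|^2}-c=\frac{|U|^2}{\sqrt{c^2+|U|^2}+c}$, which is quadratic in $U$; since part (1) already bounds $|\partial^\alpha U|$ and $|U|$ is small, differentiating this expression (whose denominator and its derivatives stay bounded) produces a sum of terms each carrying at least two derivative factors of $U$, so putting one of them in $L^\infty_x$ via $H^2_x\subseteq L^\infty_x$ gives the extra $\sqrt{E(f)(t)}$. I expect no genuine conceptual obstacle here: the proof is a direct assembly of Lemma \ref{lem22} with the structural identities of Lemma \ref{lin2}. The only point demanding care is the bookkeeping of the rational nonlinear remainders (the $\frac{\Psi^3-3\Psi^2}{2(2+\Psi-\Psi^2+2\sqrt{1+\Psi})}$-type terms) under repeated differentiation, together with the verification that all denominators stay bounded away from zero for $E(f)(t)$ small; but this is exactly the estimate \eqref{1/1+pi} already established in the proof of Lemma \ref{lem22}, so it transfers essentially verbatim.
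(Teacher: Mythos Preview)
Your proposal is correct and follows essentially the same route as the paper: expand $n$, $1/n$, $e/n$ and $U^0$ via the algebraic identities \eqref{n=}, \eqref{route pi}, \eqref{1/n=}, \eqref{e-e0}, then differentiate, control the moment integrals by Cauchy--Schwarz with \eqref{phi condition}, and handle the rational remainder terms by Lemma \ref{lem22} and \eqref{1/1+pi} together with the Sobolev embedding $H^2_x\subseteq L^\infty_x$. The only cosmetic difference is that the paper writes $U^0-c$ via the expansion $\frac{|U|^2}{2c^2}-\frac{|U|^4}{2(2c^4+c^2|U|^2+2c^3\sqrt{c^2+|U|^2})}$ rather than your $\frac{|U|^2}{\sqrt{c^2+|U|^2}+c}$, but both are manifestly quadratic in $U$ and lead to the same estimate.
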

\begin{proof}
	$\bullet$ $\partial^\alpha \{n -1\}$ : It follows from \eqref{n=}, \eqref{route pi} and \eqref{phi psi} that
	\begin{align}\label{n-1}
		\left|n -1\right| &= \left|\frac{\Psi}{2}-\frac{\Psi^2}{2(2+\Psi+2\sqrt{1+\Psi})}\right|		  \le C\|f\|_{L^2_{p,\mathcal{I}}}
	\end{align}
for sufficiently small $E(f)(t)$. 	For $\alpha\neq 0$, it takes the form of
	\begin{align*}
		\begin{split}
			\partial^\alpha \{n -1\}&=\partial^\alpha\left\{\sqrt{1+\Psi}\right\}=\frac{\mathbb{P}( \Psi,\cdots \partial^\alpha \Psi,\sqrt{1+\Psi})}{\mathbb{M}\left(\sqrt{1+\Psi}\right) }
		\end{split}
	\end{align*}
which can be estimated explicitly by using \eqref{phi psi} as follows 
	\begin{align*}
		\left|	\partial^\alpha \{n -1\}\right| 	&\le C\sum_{|{\alpha_1}|\le |\alpha|}\|\partial^{\alpha_1}f\|_{L^2_{p,\mathcal{I}}}
	\end{align*}
for sufficiently small $E(f)(t)$.
	\noindent\newline
	$\bullet$ $\partial^\alpha U $ : Observe that	\begin{align*}
		 |U|  &=\left| \frac{1}{n}\int_{\mathbb{R}^3}\int_0^\infty p F\phi(\mathcal{I})\,d\mathcal{I}\frac{dp}{p^0}\right| =\left| \frac{1}{n}\int_{\mathbb{R}^3}\int_0^\infty p f\sqrt{F_E^0}\phi(\mathcal{I})\,d\mathcal{I}\frac{dp}{p^0}\right|\le  \frac{C}{n}\|f\|_{L^2_{p,\mathcal{I}}} . 
	\end{align*}
Using \eqref{n-1} and the Sobolev embedding $H_x^2\subseteq L^{\infty}_x$, this leads to
\begin{align*}
	|U | \leq  \frac{C}{ 1-C\sqrt{E(f)(t)}  }\|  f\|_{L^2_{p,\mathcal{I}}}\leq C\|  f\|_{L^2_{p,\mathcal{I}}}
\end{align*}
for sufficiently small $E(f)(t)$. 	For $\alpha\neq 0$, it follows from \eqref{1/n=}, \eqref{Psi1} and \eqref{phi psi} that
	\begin{align*}
		|\partial^\alpha U |&= \left| \sum_{|{\alpha_1}|\le|\alpha|}C_{\alpha_1}\partial^{\alpha_1}\left\{\frac{1}{n} \right\}\int_{\mathbb{R}^3}\int_0^\infty p\partial^{\alpha-{\alpha_1}} f\sqrt{F_E^0} \phi(\mathcal{I})\,d\mathcal{I}\frac{dp}{p^0}  \right|\cr
		&\le C  \left(1+\| f\|_{L^2_{p,\mathcal{I}}} \right) \|\partial^{\alpha}f\|_{L^2_{p,\mathcal{I}}}+C \sum_{0<|{\alpha_1}|\le |\alpha|}\|\partial^{\alpha-{\alpha_1}}f\|_{L^2_{p,\mathcal{I}}}\cr
		&\times\left(  \|\partial^{ \alpha_1 }f\|_{L^2_{p,\mathcal{I}}}+\sum_{|\alpha_2|\le |\alpha_1|}	\|\partial^{ \alpha_2}f\|_{L^2_{p,\mathcal{I}}}+\left|\frac{\mathbb{P}\left(\Psi,\cdots,\partial^{ \alpha_1} \Psi,\sqrt{1+\Psi}\right)}{\mathbb{M}\left(2+\Psi-\Psi^2+2\sqrt{1+\Psi},\sqrt{1+\Psi} \right)}\right| \right) 
	\end{align*}
for sufficiently small $E(f)(t)$. Using \eqref{1/1+pi} and the Sobolev embedding $H^2(\mathbb{R}_x^{3})\subseteq L^{\infty}(\mathbb{R}_x^{3})$, we have
	\begin{align*}\
		|\partial^\alpha U |&\le C\sum_{|{\alpha_1}|\le|\alpha|}\|\partial^{\alpha_1}f\|_{L^2_{p,\mathcal{I}}}.
	\end{align*}
	\newline
	$\bullet$ $\displaystyle\partial^\alpha \left\{\frac{e}{n} -\widetilde{e}(T_0)\right\}$ : Recall from \eqref{e-e0} that
 	\begin{eqnarray*}
	&&\frac{e}{n} -\widetilde{e}(T_0) =  \int_{\mathbb{R}^3}\int_0^\infty \left\{cp^0\left(1+\frac{\mathcal{I}}{mc^2}\right)-\widetilde{e}(T_0)\right\} f\sqrt{F_E^0} \phi(\mathcal{I})\,d\mathcal{I}dp   \cr
	&&\hspace{12mm} -c\int_{\mathbb{R}^3}\int_0^\infty  f\sqrt{F_E^0}\phi(\mathcal{I}) \,d\mathcal{I}dp\int_{\mathbb{R}^3}\int_0^\infty p^0f\sqrt{F_E^0}\left(1+\frac{\mathcal{I}}{mc^2}\right)\phi(\mathcal{I})\,d\mathcal{I}\,dp\cr
	&&\hspace{12mm}-\frac{1}{c}\left(	\frac{\Psi_1}{2}	+\frac{\Psi^3-3\Psi^2}{2(2+\Psi-\Psi^2+2\sqrt{1+\Psi})}\right) \int_{\mathbb{R}^3}\int_0^\infty \left(U^\mu p_\mu\right)^2F\left(1+\frac{\mathcal{I}}{mc^2}\right)\phi(\mathcal{I})\,d\mathcal{I}\frac{dp}{p^0}\cr
	&&\hspace{12mm}+\frac{1}{c}\left( 1-\int_{\mathbb{R}^3}\int_0^\infty f\sqrt{F_E^0}  \phi(\mathcal{I}) \,d\mathcal{I}\,dp \right) \int_{\mathbb{R}^3}\int_0^\infty \left\{	2cp^0\Phi+\Phi^2\right\}F\left(1+\frac{\mathcal{I}}{mc^2}\right)\phi(\mathcal{I})\,d\mathcal{I}\frac{dp}{p^0}.
\end{eqnarray*}
This can be handled in the same manner as in the previous cases but it requires tedious computations, so we omit the proof for brevity.

	\noindent\newline
	$\bullet$ $\partial^\alpha \{U^0 -c\}$ : By \eqref{route pi}, $U^0-c$ can be rewritten as
	\begin{align*}
		U ^0-c&=\sqrt{c^2+|U |^2}-c\cr
		&=\frac{|U|^2}{2c^2}-\frac{|U|^4}{2(2c^4+c^2|U|^2+2c^3\sqrt{c^2+|U|^2})}.
	\end{align*}
	Then it follows from the previous result of $U$ that
	\begin{align*}
		|U ^0-c|&=\left|\frac{|U|^2}{2c^2}-\frac{|U|^4}{2(2c^4+c^2|U|^2+2c^3\sqrt{c^2+|U|^2})}\right|\cr
		&\le C\|f\|^2_{L^2_{p,\mathcal{I}}}+C\|f\|_{L^2_{p,\mathcal{I}}}^4\cr
		&\le C\sqrt{E(f)(t)}\|f\|_{L^2_{p,\mathcal{I}}}
	\end{align*}
for sufficiently small $E(f)(t)$. Similarly, one can have
	\begin{align*}
		\left|	\partial^\alpha \{U^0 -c\}\right|&=\left|\partial^\alpha \left\{\frac{|U|^2}{2c^2}-\frac{|U|^4}{2(2c^4+c^2|U|^2+2c^3\sqrt{c^2+|U|^2})} \right\}\right|\cr
		&\le C\sum_{|{\alpha_1}|\le|\alpha|}|\partial^{\alpha_1}U| |\partial^{\alpha-{\alpha_1}}U| + \left|\frac{\mathbb{P}( U , \cdots, \partial^\alpha U,\sqrt{c^2+|U ^2|} )}{\mathbb{M}(2c^4+c^2|U |^2+2c^3\sqrt{c^2+|U |^2}, \sqrt{c^2+|U| ^2 }) }\right|\cr
		&\le C\sqrt{E(f)(t)}\sum_{|{\alpha_1}|\le|\alpha|}\|\partial^{\alpha_1} f\|_{L^2_{p,\mathcal{I}}}.
	\end{align*}
\end{proof}
In the following lemma, we present the Hessian matrix $D^2\widetilde{F}(\theta)$ with respect to $n_{\theta},U_\theta,(e/n)_{\theta},q^\mu_{\theta}$ to clarify the explicit form of the nonlinear perturbation $\Gamma_4$ given in Lemma \ref{lin2}.
\begin{lemma}\label{QQ} We have
	$$
	D_{n_{\theta},U_\theta,(e/n)_{\theta},q^\mu_{\theta}}^{2}\tilde{F}(\theta)=\mathcal{Q} F(\theta),
	$$
	where $\mathcal{Q}$ is a $9\times9$ symmetric matrix whose elements are given by
	\begin{eqnarray*}
	&&\mathcal{Q}_{1,1}=0,\qquad \mathcal{Q}_{1,1+i}= -\frac{ 1+\frac{\mathcal{I}}{mc^2} }{k_Bn_\theta T_\theta}  \left(\frac{U_\theta^i}{U_\theta^0}p^0-p^i\right),\cr
	&&\mathcal{Q}_{1,5}=\frac{1}{k_Bn_\theta T_\theta^2 }\frac{1}{\{\widetilde{e}\}^{\prime}(T_\theta)}\left\{\frac{k_BT_\theta^2}{mc^2}\widetilde{e}(T_\theta) +\left(1+\frac{\mathcal{I}}{mc^2}\right) U_{\theta\mu}p^\mu \right\},\qquad \mathcal{Q}_{1,6}=0,\qquad\mathcal{Q}_{1,6+i}=0,\cr
	&&\mathcal{Q}_{1+i,1+j}=\biggl(1-p^\mu q_{\theta\mu}\frac{1+\frac{\mathcal{I}}{mc^2}}{b_\theta mc^2}\biggl)\frac{ 1+\frac{\mathcal{I}}{mc^2} }{k_BT_\theta}\left\{\frac{ 1+\frac{\mathcal{I}}{mc^2}  }{k_BT_\theta}  \biggl(\frac{U_\theta^i}{U_\theta^0}p^0-p^i\biggl)\biggl(\frac{U_\theta^j}{U_\theta^0}p^0-p^j\biggl)+ \frac{U^i_\theta U^j_\theta}{(U_\theta^0)^3}p^0\right\} \cr
	&&\mathcal{Q}_{1+i,5}=  \left(\frac{U^i_\theta}{U^0_\theta}p^0-p^i\right)\frac{(1+\frac{\mathcal{I}}{mc^2} )}{k_BT_\theta^2\{\widetilde{e}\}^\prime (T_\theta)}\biggl\{ \biggl(1-p^\mu q_{\theta\mu}\frac{1+\frac{\mathcal{I}}{mc^2}}{b_\theta mc^2}\biggl)\cr
	&&\hspace{11mm}\times\biggl(1-  \frac{T_\theta\widetilde{e}(T_\theta)}{mc^2}-  \frac{1+\frac{\mathcal{I}}{mc^2}}{k_BT_\theta}U_{\theta\mu}p^\mu\biggl) -  p^\mu q_{\theta\mu}T_\theta\frac{1+\frac{\mathcal{I}}{mc^2}}{mc^2b_\theta^2  } \partial_{T_\theta}b_\theta   \biggl\},\cr
	&&\mathcal{Q}_{1+i,6}=  \frac{  (1+\frac{\mathcal{I}}{mc^2} )^2}{b_\theta mc^2k_BT_\theta}  \left(\frac{U_\theta^i}{U_\theta^0}p^0-p^i\right)p^0,\qquad \mathcal{Q}_{1+i,6+j}=  - \frac{  (1+\frac{\mathcal{I}}{mc^2} )^2}{b_\theta mc^2k_BT_\theta}  \left(\frac{U_\theta^i}{U_\theta^0}p^0-p^i\right)p^j, \cr
		&& \mathcal{Q}_{5,5}=\frac{1-p^\mu q_{\theta\mu}\frac{1+\frac{\mathcal{I}}{mc^2}}{b_\theta mc^2}}{\left(\{\widetilde{e}\}^\prime(T_\theta)\right)^2}\left\{- \frac{mc^2\widetilde{e}(T_\theta)}{k_B^2T_\theta^4} +mc^2\biggl(1+\frac{\mathcal{I}}{mc^2}\biggl) \frac{U_{\theta\mu}p^\mu}{k_B^2T_\theta^4}  +\frac{2}{ T_\theta}+ \frac{\{\widetilde{e}\}^{\prime\prime}(T_\theta)}{\{\widetilde{e}\}^\prime(T_\theta)}-\frac{ \{\widetilde{e}\}^\prime(T_\theta)}{ k_BT_\theta^2}\right\}\cr
		&&\hspace{8mm}+\frac{p^\mu q_{\theta\mu}\left(1+\frac{\mathcal{I}}{mc^2}\right)}{b_\theta^2 \left(\{\widetilde{e}\}^\prime(T_\theta)\right)^2} \frac{\partial_{T_\theta}b_\theta}{k_BT_\theta^2}  \biggl\{-\frac{2\widetilde{e}(T_\theta)}{mc^2} +2\biggl(1+\frac{\mathcal{I}}{mc^2}\biggl)\frac{U_{\theta\mu}p^\mu}{mc^2}+2\frac{k_B^2T_\theta^3}{m^2c^4}+\frac{k_B^2T_\theta^4}{m^2c^4}\frac{\{\widetilde{e}\}^{\prime\prime}(T_\theta)}{\{\widetilde{e}\}^\prime(T_\theta)} \cr
		&&\hspace{8mm}-\frac{2k_BT_\theta^2}{mc^2}\frac{\partial_{T_{\theta}}b_\theta}{b_\theta}+\frac{k_BT_\theta^2}{mc^2}\frac{\partial^2_{T_{\theta}}b_\theta}{\partial_{T_\theta}b_\theta}+\frac{2k_BT_\theta}{mc^2}\biggl\}\cr
&& \mathcal{Q}_{5,6}=-  \frac{1+\frac{\mathcal{I}}{mc^2}}{b_\theta k_BT_\theta^2\{\widetilde{e}\}^\prime(T_\theta) }p^0\left\{-\frac{\widetilde{e}(T_\theta)}{mc^2}+\left(1+\frac{\mathcal{I}}{mc^2}\right)\frac{U_{\theta\mu}p^\mu}{mc^2}-\frac{k_BT_\theta^2}{mc^2}\frac{\partial_{T_\theta}b_\theta}{b_\theta}  \right\}, \cr
		&& \mathcal{Q}_{5,6+i}=\frac{1+\frac{\mathcal{I}}{mc^2}}{b_\theta k_BT_\theta^2\{\widetilde{e}\}^\prime(T_\theta)}p^i  \left\{-\frac{\widetilde{e}(T_\theta)}{mc^2}+\left(1+\frac{\mathcal{I}}{mc^2}\right)\frac{U_{\theta\mu}p^\mu}{mc^2}-\frac{k_BT_\theta^2}{mc^2}\frac{\partial_{T_\theta}b_\theta}{b_\theta}  \right\}, \cr
	&&\mathcal{Q}_{6,6}=0,\qquad \mathcal{Q}_{6,6+i}=0,\qquad\mathcal{Q}_{6+i,6+j}=0
	\end{eqnarray*}
	for $i,j=1,2,3$. 

\end{lemma}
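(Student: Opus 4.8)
The strategy is to exploit the factorization $\widetilde{F}(\theta)=G(\theta)F(\theta)$ already used in the proof of Lemma \ref{lin2}, where
\[
F(\theta)=\frac{n_\theta}{A(T_\theta)}\exp\left\{-\Big(1+\frac{\mathcal{I}}{mc^2}\Big)\frac{U_\theta^\mu p_\mu}{k_BT_\theta}\right\},\qquad G(\theta)=1-p^\mu q_{\theta\mu}\frac{1+\frac{\mathcal{I}}{mc^2}}{b_\theta mc^2},
\]
with $A(T)=(mc)^3\int_{\mathbb{R}^3}\int_0^\infty e^{-(mc^2+\mathcal{I})\frac{1}{k_BT}\sqrt{1+|p|^2}}\phi(\mathcal{I})\,d\mathcal{I}dp$. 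The first task is to record which of the nine variables each factor sees: $F(\theta)$ is a function of $(n_\theta,U_\theta,(e/n)_\theta)$ only, the dependence on $(e/n)_\theta$ entering through $T_\theta=\{\widetilde{e}\}^{-1}((e/n)_\theta)$ and the one on $U_\theta$ through $U_\theta^\mu p_\mu=U_\theta^0p^0-U_\theta\cdot p$ with $U_\theta^0=\sqrt{c^2+|U_\theta|^2}$, whereas $G(\theta)$ is affine in $q^\mu_\theta$ and depends on $(n_\theta,(e/n)_\theta)$ only through $b_\theta=n_\theta\widetilde{b}(T_\theta)$, $\widetilde{b}(T)=\big(k_BT\widetilde{e}(T)+(k_BT)^2\big)/mc^2$ (Lemma \ref{computation F_E^0}). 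For any two variables $a,b$ among the nine, the Leibniz rule gives
\[
\frac{1}{F(\theta)}\,\partial_a\partial_b\widetilde{F}(\theta)=\partial_a\partial_bG+(\partial_aG)(\partial_b\ln F)+(\partial_bG)(\partial_a\ln F)+G\,\partial_a\partial_b\ln F+G(\partial_a\ln F)(\partial_b\ln F),
\]
so it suffices to compute the first and second derivatives of $\ln F$ and of $G$ separately and assemble $\mathcal{Q}$ from the displayed formula.

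For $\ln F=\ln n_\theta-\ln A(T_\theta)-(1+\tfrac{\mathcal{I}}{mc^2})\tfrac{U_\theta^\mu p_\mu}{k_BT_\theta}$ the only non-elementary input is $\tfrac{d}{dT}\ln A(T)=\tfrac{\widetilde{e}(T)}{k_BT^2}$, which follows at once by differentiating the exponent in $T$ and recognizing the right-hand side of \eqref{gamma relation}; combined with $\tfrac{dT_\theta}{d(e/n)_\theta}=1/\{\widetilde{e}\}'(T_\theta)$ (Proposition \ref{solvability gamma}), $\partial_{U_\theta^i}U_\theta^0=U_\theta^i/U_\theta^0$, and one further differentiation, this produces all of $\partial_{n_\theta}\ln F$, $\partial_{U_\theta^i}\ln F=-\tfrac{1+\mathcal{I}/mc^2}{k_BT_\theta}\big(\tfrac{U_\theta^i}{U_\theta^0}p^0-p^i\big)$, $\partial_{(e/n)_\theta}\ln F=\tfrac{1}{k_BT_\theta^2\{\widetilde{e}\}'(T_\theta)}\big\{(1+\tfrac{\mathcal{I}}{mc^2})U_\theta^\mu p_\mu-\widetilde{e}(T_\theta)\big\}$ and their second derivatives, the second $(e/n)_\theta$-derivative being the place where $\{\widetilde{e}\}''(T_\theta)$ enters. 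For $G$, affineness in $q^\mu_\theta$ gives $\partial_{q^\mu_\theta}\partial_{q^\nu_\theta}G=0$ and $\partial_{q^0_\theta}G=-p^0\tfrac{1+\mathcal{I}/mc^2}{b_\theta mc^2}$, $\partial_{q^i_\theta}G=p^i\tfrac{1+\mathcal{I}/mc^2}{b_\theta mc^2}$, while the $n_\theta$- and $(e/n)_\theta$-derivatives of $G$ come from differentiating $b_\theta^{-1}=(n_\theta\widetilde{b}(T_\theta))^{-1}$ and bring in $\partial_{T_\theta}b_\theta$ and $\partial^2_{T_\theta}b_\theta$.

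Substituting these into the assembly formula yields each of the $9\times 9$ entries of $\mathcal{Q}$, and symmetry of $\mathcal{Q}$ is automatic. The one structural simplification that must be recorded is that $G-1$ is homogeneous of degree $-1$ in $n_\theta$, so $\partial_{n_\theta}G+G/n_\theta=1/n_\theta$ and, more generally, the $q^\mu_\theta$-dependence carried by $\partial_{n_\theta}G$ cancels against that carried by $G$; this is exactly why the rows and columns $\mathcal{Q}_{1,\bullet}$ and $\mathcal{Q}_{1+i,\bullet}$ involve $q^\mu_\theta$ only through the prefactor $1-p^\mu q_{\theta\mu}\tfrac{1+\mathcal{I}/mc^2}{b_\theta mc^2}$ (and $\mathcal{Q}_{1,1}=\mathcal{Q}_{1,6}=\mathcal{Q}_{1,6+i}=0$), while the whole $q$-$q$ block vanishes because neither $\partial_q\partial_qG$ nor $\partial_q\ln F$ survives. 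I do not expect any genuine mathematical difficulty: the entire proof is a differentiation exercise. The only real obstacle is bookkeeping — correctly propagating the two nested chain rules (through $T_\theta\mapsto(e/n)_\theta$ and through $b_\theta\mapsto(n_\theta,T_\theta)$) together with the dependence $U_\theta^0=\sqrt{c^2+|U_\theta|^2}$, and keeping all signs and the recurring factors $1+\mathcal{I}/mc^2$, $k_BT_\theta$, $mc^2$, $\{\widetilde{e}\}'(T_\theta)$, $b_\theta$ exactly in place. The entry $\mathcal{Q}_{5,5}$, which alone involves both $\{\widetilde{e}\}''$ and $\partial^2_{T_\theta}b_\theta$, is the most laborious, with $\mathcal{Q}_{1+i,5}$, $\mathcal{Q}_{5,6}$ and $\mathcal{Q}_{5,6+i}$ close behind; I would therefore tabulate $\partial_a\ln F$, $\partial_a\partial_b\ln F$, $\partial_aG$, $\partial_a\partial_bG$ once and then simply read off the result.
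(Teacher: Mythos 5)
Your factorization $\widetilde{F}(\theta)=G(\theta)F(\theta)$, the Leibniz identity for $\tfrac{1}{F}\partial_a\partial_b\widetilde F$, and the three ingredients $\tfrac{d}{dT}\ln A=\widetilde{e}(T)/(k_BT^2)$, $dT_\theta/d(e/n)_\theta=1/\{\widetilde{e}\}'(T_\theta)$, and $\partial_{U^i_\theta}U^0_\theta=U^i_\theta/U^0_\theta$ are the natural direct calculation; the paper simply declares the proof straightforward and omits it, so there is no alternative argument to compare against. Your observation that $G-1$ is homogeneous of degree $-1$ in $n_\theta$, so that $\partial_{n_\theta}\widetilde F=F(\theta)/n_\theta$ identically, is correct and is indeed why $\mathcal Q_{1,1}=\mathcal Q_{1,6}=\mathcal Q_{1,6+i}=0$ and why the entire first row is $q_\theta$-free.

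Two cautions, though. First, the proposal is a roadmap and not a proof: none of the $9\times 9$ entries is actually assembled, and you yourself say the entire content is the bookkeeping. Second, carrying out that bookkeeping does not reproduce the printed formulas exactly. From $\partial_{n_\theta}\widetilde F=F(\theta)/n_\theta$ one gets $\mathcal Q_{1,5}=\tfrac{1}{n_\theta}\partial_{(e/n)_\theta}\ln F=\tfrac{1}{k_Bn_\theta T_\theta^2\{\widetilde e\}'(T_\theta)}\bigl\{(1+\tfrac{\mathcal I}{mc^2})U_{\theta\mu}p^\mu-\widetilde e(T_\theta)\bigr\}$, consistent with the first-order coefficient $\partial_{(e/n)_\theta}\widetilde F\big|_{\theta=0}$ recorded in the proof of Lemma \ref{lin2}, whereas the printed $\mathcal Q_{1,5}$ has $+\tfrac{k_BT_\theta^2}{mc^2}\widetilde e(T_\theta)$ in that slot. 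Likewise, since $\partial_{U^i_\theta}\partial_{U^j_\theta}(U_\theta^\mu p_\mu)=p^0\bigl(\delta_{ij}/U^0_\theta-U^i_\theta U^j_\theta/(U^0_\theta)^3\bigr)$, the curly brace in $\mathcal Q_{1+i,1+j}$ should carry an extra $-\delta_{ij}\,p^0/U^0_\theta$. These look like misprints in the statement of the lemma (the only downstream use is the crude polynomial bound \eqref{yiyj}, which is unaffected), but a faithful execution of your plan would deliver the corrected entries rather than the printed ones, so be prepared to reconcile rather than blindly match.
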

\begin{proof}
	The proof is straightforward. We omit it.
\end{proof}
We are now ready to deal with the nonlinear perturbations. 
\begin{lemma}\label{nonlinear1}	Suppose $E(f)(t)$ is sufficiently small. Then we have
\begin{enumerate}
	\item $\displaystyle 
	\left|\int_{\mathbb{R}^3}\int_0^\infty\partial^\alpha_\beta\Gamma (f) g(p,\mathcal{I}) \phi(\mathcal{I})\,d\mathcal{I}dp\right|\le C\sum_{|{\alpha_1}|\le |\alpha|}\|\partial^{\alpha_1} f\|_{L^2_{p,\mathcal{I}}}\|\partial^{\alpha-{\alpha_1}} f\|_{L^2_{p,\mathcal{I}}}\|g\|_{L^2_{p,\mathcal{I}}},
	$ 
	\item $\displaystyle 
	\left\|\partial^\alpha_\beta\Gamma (f)  \right\|_{L^2_{p,\mathcal{I}}}\le C\sum_{|{\alpha_1}|\le |\alpha|}\|\partial^{\alpha_1} f\|_{L^2_{p,\mathcal{I}}}\|\partial^{\alpha-{\alpha_1}} f\|_{L^2_{p,\mathcal{I}}}.
	$
\end{enumerate}
\end{lemma}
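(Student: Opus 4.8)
The plan is to read off the needed bounds from the explicit structure of $\Gamma(f)$ recorded in Proposition \ref{lin3} and Lemma \ref{lin2}. Writing
\[
\Gamma(f)=\frac{U_\mu p^\mu}{c\,p^0}\sum_{i=1}^4\Gamma_i(f)+\frac{P(f)-f}{c\,p^0}\,\Phi,
\]
and expanding $P(f)$ via \eqref{Pf}, one checks that every summand of $\Gamma_1,\dots,\Gamma_4$ and of the last term is a finite product of four kinds of factors: (i) a polynomial in $p^0,p,\mathcal{I}$; (ii) the weight $\sqrt{F_E^0}$, or in $\Gamma_4$ the integrand $F(\theta)$ with $\theta\in[0,1]$; (iii) ``macroscopic'' scalars such as $\int f\sqrt{F_E^0}\,\phi\,d\mathcal{I}dp$, $\Psi$, $\Psi_1$, $n-1$, $U$, $e/n-\widetilde{e}(T_0)$, $q^\mu$, or spatial integrals of $f$ against polynomial weights; and (iv) bounded ``rational'' factors such as $\Psi^2/\bigl(2(2+\Psi+2\sqrt{1+\Psi})\bigr)$ or the $T_\theta,U_\theta,b_\theta$-dependent coefficients of $\mathcal{Q}$. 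The crucial structural point is that in each summand \emph{at least two} factors of type (iii) occur, so the whole expression is at least quadratic in the relevant norms of $f$.

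Next I would assemble the pointwise ingredients. The prefactor $U_\mu p^\mu/(c\,p^0)$ and all its $\partial_\beta$-derivatives are bounded uniformly, since $U^0\ge c$ and $|U|\le C\sqrt{E(f)(t)}$ give $\tfrac12 cp^0\le U_\mu p^\mu\le 2cp^0$ for $E(f)(t)$ small, and $p$-differentiation only lowers powers of $p^0$. The quantities $\Phi,\Phi_1$ and their $\partial_\beta$-derivatives are bounded functions of $p$ (carrying at worst one power $p^0$, which is absorbed by $1/(cp^0)$ or by $\sqrt{F_E^0}$), while $|\partial^\alpha\Phi|\le Cp^0\sum_{|\alpha_1|\le|\alpha|}\|\partial^{\alpha_1}f\|_{L^2_{p,\mathcal{I}}}$ and the sharper $\sqrt{E(f)(t)}$-bound for $\Phi_1$ hold by Lemma \ref{lem22}; the scalars $n-1,U,e/n-\widetilde{e}(T_0)$ are $O(\|f\|_{L^2_{p,\mathcal{I}}})$ with the same control on derivatives by Lemma \ref{lem2}, and $q^\mu$ satisfies the analogous bounds (read off from its definition in \eqref{macroscopic fields} exactly as in Lemma \ref{lem2} and the treatment of \eqref{flux}); $\Psi,\Psi_1$ are controlled by Lemma \ref{lem22}. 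For $\Gamma_4$ I invoke Lemma \ref{QQ}: $D^2\widetilde F(\theta)=\mathcal{Q}F(\theta)$ with the entries of $\mathcal{Q}$ polynomials in $(p^0,p,\mathcal{I})$ whose coefficients are smooth bounded functions of $(n_\theta,U_\theta,(e/n)_\theta,q^\mu_\theta)$. Since $|e/n-\widetilde{e}(T_0)|\le C\sqrt{E(f)(t)}$ (Lemma \ref{n positive}) and $\widetilde{e}$ is a strictly increasing homeomorphism near $T_0$ (Proposition \ref{solvability gamma} and identity \eqref{e5}), the homotopy data, and hence $T_\theta$ and $\{\widetilde{e}\}^{\prime}(T_\theta)$, stay in a fixed compact subset of their admissible ranges uniformly in $\theta\in[0,1]$; consequently $U_\theta^\mu p_\mu\ge\tfrac12 cp^0$ and $F(\theta)\le C\exp\{-c'(1+\mathcal{I}/(mc^2))p^0\}$ uniformly in $\theta$ and $x$, and $(n_\theta-1,U_\theta,(e/n)_\theta-\widetilde{e}(T_0),q^\mu_\theta)=O(\|f\|_{L^2_{p,\mathcal{I}}})$. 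In particular, a polynomial times $\sqrt{F_E^0}$ (or times $F(\theta)^{1/2}$) lies in $L^2_{p,\mathcal{I}}$ with norm bounded uniformly, by the integrability hypothesis \eqref{phi condition}.

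With these in hand the two estimates follow by a routine Leibniz expansion. Applying $\partial^\alpha_\beta$ to a generic summand, the $\beta$-derivatives act only on the polynomial$\times\sqrt{F_E^0}$ (or $F(\theta)$) factor and on $U_\mu p^\mu/(cp^0)$, $\Phi$, $\Phi_1$, staying in the ``polynomial$\times$Gaussian'' or ``bounded'' class; the $\alpha$-derivatives distribute over all factors, each differentiated scalar of type (iii) being bounded by $C\sum_{|\alpha_j|\le|\alpha|}\|\partial^{\alpha_j}f\|_{L^2_{p,\mathcal{I}}}$ and each differentiated rational factor by a constant (or, for $\alpha\neq0$, again by $C\sum\|\partial^{\alpha_j}f\|_{L^2_{p,\mathcal{I}}}$, as in \eqref{1/1+pi}). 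This exhibits $\partial^\alpha_\beta\Gamma(f)$ as a finite sum of terms $(\text{poly})\sqrt{F_E^0}\cdot(\text{product of }\ge2\text{ norm-bounded scalars})$; any term carrying three or more such scalars is reduced to the claimed quadratic form by estimating the lowest-order scalar factor via the Sobolev embedding $H^2_x\hookrightarrow L^\infty_x$ and the smallness of $E(f)(t)$, exactly as in the proofs of Lemmas \ref{lem22} and \ref{lem2}. For (1), Cauchy--Schwarz in $(p,\mathcal{I})$ pulls out $\|g\|_{L^2_{p,\mathcal{I}}}$ and leaves the finite $L^2_{p,\mathcal{I}}$-norm of the polynomial$\times\sqrt{F_E^0}$ factor times the two norm-bounded scalars; for (2), the scalars are constant in $(p,\mathcal{I})$ and factor out of $\|\cdot\|_{L^2_{p,\mathcal{I}}}$, again leaving a finite weight norm. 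Summing the finitely many terms gives the result. I expect the only real difficulty to be the bookkeeping: $\Gamma_2,\Gamma_3$ and especially $\Gamma_4$ with the $9\times9$ Hessian $\mathcal{Q}$ contain many summands, and one must verify, uniformly in $\theta\in[0,1]$, that each retains Gaussian decay in $p$ and $\mathcal{I}$ with bounded coefficients — which is precisely where the compactness of the homotopy data, i.e. the control of $T_\theta$ and $\{\widetilde{e}\}^{\prime}(T_\theta)$ provided by Lemma \ref{n positive} and Proposition \ref{solvability gamma}, is genuinely used.
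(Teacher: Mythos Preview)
Your proposal is correct and follows essentially the same route as the paper. The paper likewise reduces to the explicit structure from Proposition~\ref{lin3} and Lemma~\ref{lin2}, singles out $\Gamma_4$ as the representative (most involved) term, expands via Leibniz, bounds the macroscopic factors $y_i=(n-1,U,e/n-\widetilde{e}(T_0),q^\mu)$ through Lemma~\ref{lem2}, bounds the Hessian entries $\mathcal{Q}_{ij}$ by polynomials in $(p^0,\mathcal{I})$, and---exactly as you anticipate---establishes the uniform-in-$\theta$ Gaussian control $F(\theta)/\sqrt{F_E^0}\le |\mathbb{P}(p^0,\mathcal{I})|e^{-C'(1+\mathcal{I}/(mc^2))p^0}$ via the smallness condition $\min_\theta\bigl(\sqrt{c^2+|U_\theta|^2}-|U_\theta|\bigr)/T_\theta>c/(2T_0)$; then \eqref{phi condition} and Cauchy--Schwarz finish (1), and (2) is declared identical.

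One small correction to your bookkeeping: in the last piece $\dfrac{P(f)-f}{cp^0}\,\Phi$, the factor $f$ itself depends on $p$, so $\partial_\beta$ \emph{does} hit $f$ there and produces $\partial^{\alpha_1}_{\beta_1}f$ terms, contrary to your statement that ``$\beta$-derivatives act only on the polynomial$\times\sqrt{F_E^0}$ \dots'' factors. This does not affect the argument materially---$\Phi/(cp^0)$ and all its $p$-derivatives are bounded by $C\sum\|\partial^{\alpha_2}f\|_{L^2_{p,\mathcal{I}}}$ via Lemma~\ref{lem22}, so the resulting product is still quadratic---but the right-hand side for this single term should in principle carry $\|\partial^{\alpha_1}_{\beta_1}f\|_{L^2_{p,\mathcal{I}}}$ rather than a pure $\|\partial^{\alpha_1}f\|_{L^2_{p,\mathcal{I}}}$. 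The paper's proof, which treats only $\Gamma_4$ in detail and says ``the other terms can be handled in the same manner,'' is equally silent on this point; in the subsequent energy estimates the distinction is harmless since such terms are absorbed into $\sqrt{E(f)(t)}\,\mathcal{D}(t)$.
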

\begin{proof}
$\bullet$ Proof of (1):	Recall from Proposition \ref{lin3} that
	$$
	\Gamma (f)=\frac{U_\mu p^\mu}{c  p^0} \sum_{i=1}^4\Gamma_i (f)+\frac{P(f)-f}{c p^0}\Phi .
	$$
To avoid the repetition, we only prove the most complicated term:
	$$
\frac{U_\mu p^\mu}{c  p^0} \Gamma_{4} (f)=\frac{U_\mu p^\mu}{c p^0}\frac{1}{\sqrt{F_E^0}}\int_0^1 (1-\theta)\Big(n -1,U,\frac{e}{n} -\widetilde{e}(T_0),q^\mu\Big)D^2\widetilde{F}(\theta)\Big(n -1,U ,\frac{e}{n} -\widetilde{e}(T_0),q^\mu\Big)^Td\theta
	$$
since the other terms can be handled in the same manner.
For this, we use the following notation  
 $$(y_{1},\cdots,y_9):=\left(n-1,U,\frac{e}{n}-\widetilde{e}(T_0),q^\mu\right)$$ 
to denote
	$$
	\left(n-1,U,\frac{e}{n}-\widetilde{e}(T_0),q^\mu\right)D_{n_{\theta},U_{\theta},(e/n)_{\theta},q^\mu_\theta}^{2}\widetilde{F}(\theta)\left(n-1,U,\frac{e}{n}-\widetilde{e}(T_0),q^\mu\right)^{T}=\sum_{i,j=1}^{9}y_{i}y_{j}\mathcal{Q}_{ij}F(\theta)
	$$
where $\mathcal{Q}_{ij}$ $(i,j=1,\cdots, 9)$ was given in Lemma \ref{QQ}. We observe that 
	\begin{align}\label{long}\begin{split}
		 & \partial^{\alpha}_{\beta}\biggl\{\frac{1}{\sqrt{F_E^0}}\int_0^1 (1-\theta)\left(n -1,U,\frac{e}{n} -\widetilde{e}(T_0),q^\mu\right)D^2\widetilde{F}(\theta)\left(n -1,U ,\frac{e}{n} -\widetilde{e}(T_0),q^\mu\right)^T\,d\theta\biggl\}\cr
		 &=\sum_{i,j=1}^9\sum_{\substack{|\alpha_{1}|+\cdots+|\alpha_{4}|\cr=|\alpha|}}\!\!\!\!\partial^{\alpha_{1}}y_{i}\partial^{\alpha_{2}}y_{j}
		\biggl(\sum_{\substack{|\beta_{1}|+\cdots+|\beta_{3}|\cr=|\beta|}}
		\int^1_0(1-\theta)\partial^{\alpha_{3}}_{\beta_{1}}\mathcal{Q}_{ij}\partial^{\alpha_{4}}_{\beta_{2}}F(\theta)d\theta\biggl) \partial_{\beta_{3}}\biggl\{\frac{1}{\sqrt{F_E^0}}\biggl\}.
\end{split}	\end{align}
Here $\partial^{\alpha_1}y_i \partial^{\alpha_2}y_j$ is bounded from above by Lemma \ref{lem2}:
	\begin{align}\label{yiyj0}\begin{split}
		 \big|\partial^{\alpha_1} y_i \partial^{\alpha_2}y_j\big|&\le C	\sum_{|\alpha'_1|\le|\alpha_1|}\|\partial^{\alpha'_1}f\|_{L^2_{p,\mathcal{I}}}\sum_{|\alpha'_2| \le|\alpha_2|}\|\partial'^{\alpha_2}f\|_{L^2_{p,\mathcal{I}}}.
\end{split}\end{align}
Note from Lemma \ref{lem22} and Lemma \ref{lem2} that derivatives of the macroscopic fields $n,U,e/n$ and $q^\mu$ are dominated by the $L^2_{p,\mathcal{I}}$ norm of the derivative of $f$, and with the aid of the Sobolev embedding $H^2(\mathbb{R}_x^{3})\subseteq L^{\infty}(\mathbb{R}_x^{3})$, one can see that for $|\alpha|\le N-2$,
$$
 \partial^{\alpha}\left(n-1,U,\frac{e}{n}-\widetilde{e}(T_0),q^\mu\right)\approx(0,0,0,0)
$$
 when $E(f)(t)$ is small enough. From this observation, we see that $\partial^{\alpha_3}_{\beta_1} \mathcal{Q}_{ij}$ is well-defined and can be estimated as
	\begin{align}\label{yiyj}\begin{split}
	 	|\partial^{\alpha_3}_{\beta_1} \mathcal{Q}_{ij}|&\le C(p^0)^3\biggl( 1+\frac{\mathcal{I}}{mc^2}\biggl)\left(1+ \| f\|_{L^2_{p,\mathcal{I}}}\right),\qquad \qquad\text{if }\qquad |\alpha_3|=0\cr
	 	|\partial^{\alpha_3}_{\beta_1} \mathcal{Q}_{ij}|&\le C(p^0)^3\biggl( 1+\frac{\mathcal{I}}{mc^2}\biggl) \sum_{|\alpha_3'|\le|\alpha_3|}\|\partial^{\alpha_3'}f\|_{L^2_{p,\mathcal{I}}}, \hspace{6.5mm}\text{otherwise }
		\end{split}\end{align}
for sufficiently small $E(f)(t)$. For the same reason, one can have
\begin{align}\label{partial F}\begin{split}
	\left| \partial^{\alpha_4}_{\beta_2} F(\theta)\right| \le C(p^0)^{|\alpha_4|} \biggl(1+\frac{\mathcal{I}}{mc^2} \biggl)^{|\alpha_4|+|\beta_2|} F(\theta).
\end{split}\end{align}
By a definition of $F_E^0$, one finds
\begin{equation*}
	\biggl|\partial_{\beta_3}\biggl\{\frac{1}{\sqrt{F_E^0}}\biggl\}\biggl|\le C\biggl(1+\frac{\mathcal{I}}{mc^2}\biggl)^{|\beta_3|}\frac{1}{\sqrt{F_E^0}}
\end{equation*}
which, together with \eqref{partial F} gives
\begin{align}\label{F theta}\begin{split}
		\biggl|\partial^{\alpha_4}_{\beta_2} F(\theta)\partial_{\beta_3}\biggl\{\frac{1}{\sqrt{F_E^0}}\biggl\}\biggl|  &\le  C(p^0)^{|\alpha_4|} \biggl(1+\frac{\mathcal{I}}{mc^2} \biggl)^{|\alpha_4|+|\beta_2|+|\beta_3|}\frac{F(\theta)}{\sqrt{F_E^0}}\cr
		&\le \left|\mathbb{P}\big(p^0,\mathcal{I}\big)\right|e^{-C^{\prime}\left(1+\frac{\mathcal{I}}{mc^2}\right)p^0}
\end{split}\end{align}
for sufficiently small $E(f)(t)$. Here  $C^\prime$ is the positive constant defined as
 \begin{align*}
	 e^{-\left(1+\frac{\mathcal{I}}{mc^2}\right)\frac{1}{k_B T_\theta}U_\theta^\mu p_\mu}e^{\frac{1}{2}\left(1+\frac{\mathcal{I}}{mc^2}\right)\frac{cp^0}{k_BT_0}}&\le   e^{-\left(1+\frac{\mathcal{I}}{mc^2}\right)\frac{1}{k_B T_\theta}\left(\sqrt{c^2+|U_\theta|^2}-|U_\theta|\right) p^0}e^{\frac{1}{2}\left(1+\frac{\mathcal{I}}{mc^2}\right)\frac{cp^0}{k_BT_0}}\cr
 	&\le  e^{-\min\left\{\frac{1}{k_B T_\theta}\left(\sqrt{c^2+|U_\theta|^2}-|U_\theta|\right)-\frac{c}{2k_BT_0} \right\}\left(1+\frac{\mathcal{I}}{mc^2}\right) }\cr
 	&\equiv e^{-C^\prime \left(1+\frac{\mathcal{I}}{mc^2}\right)p^0 }
\end{align*}
where we assumed that $E(f)(t)$ is small enough to satisfy
\begin{align*}
	\min \frac{\sqrt{c^2+|U_\theta|^2}-|U_\theta|}{  T_\theta}   > \frac{c}{2 T_0}
\end{align*}
to ensure the positivity of $C^\prime$. 
Combining \eqref{long}--\eqref{yiyj} and \eqref{F theta}, and applying the Sobolev embedding $H^2(\mathbb{R}_x^{3})\subseteq L^{\infty}(\mathbb{R}_x^{3})$ to the terms having lower derivative order, we get
	\begin{align*}
				 \partial^{\alpha}_{\beta}\Gamma_4(f)&=\partial^{\alpha}_{\beta}\biggl\{\frac{1}{\sqrt{F_E^0}}\int_0^1 (1-\theta)\left(n -1,U,\frac{e}{n} -\widetilde{e}(T_0),q^\mu\right)D^2\widetilde{F}(\theta)\left(n -1,U ,\frac{e}{n} -\widetilde{e}(T_0),q^\mu\right)^T\,d\theta\biggl\}\cr
				 &\le \left| \mathbb{P}(p^0,\mathcal{I})\right| \sum_{|\alpha'|\le|\alpha|}\|\partial^{\alpha'}f\|_{L^2_{p,\mathcal{I}}}\|\partial^{\alpha-\alpha'}f\|_{L^2_{p,\mathcal{I}}}e^{-C^{\prime}\left(1+\frac{\mathcal{I}}{mc^2}\right)p^0}
	\end{align*}
which, together with \eqref{phi condition} and Lemma \ref{lem2} gives the desired result as follows
	\begin{align*}
		&\left|\int_{\mathbb{R}^3}\int_0^\infty \partial^{\alpha}_{\beta}\left\{\frac{U_\mu p^\mu}{c p^0} \Gamma_{4} (f)\right\}g(p,\mathcal{I})\phi(\mathcal{I})\, d\mathcal{I}dp\right|\cr
		&\le   C\sum_{\substack{|{\alpha_1}|\le |\alpha|\cr |\beta_1|\le|\beta|}}\int_{\mathbb{R}^3}\int_0^\infty \left|\biggl(\partial^{\alpha-{\alpha_1}} \{U^0\}-\partial^{\alpha-{\alpha_1}}\{U\}\cdot \partial_{\beta-\beta_1}\{\hat{p}\}\biggl) \partial^{\alpha_1}_{\beta_1}\Gamma_{4} (f)g(p,\mathcal{I})\phi(\mathcal{I})\right|\,d\mathcal{I} dp\cr
		&\le C \sum_{|{\alpha_1}|\le|\alpha|}\|\partial^{{\alpha_1}}f\|_{L^2_{p,\mathcal{I}}}\|\partial^{\alpha-{\alpha_1}}f\|_{L^2_{p,\mathcal{I}}}\int_{\mathbb{R}^3}\int_0^\infty \left|\mathbb{P}(p^0,\mathcal{I})\right|e^{-C^{\prime}\left(1+\frac{\mathcal{I}}{mc^2}\right)p^0}\left|g(p,\mathcal{I})\right|\phi(\mathcal{I})\,d\mathcal{I}dp\cr
		&\le C \sum_{|{\alpha_1}|\le|\alpha|}\|\partial^{{\alpha_1}}f\|_{L^2_{p,\mathcal{I}}}\|\partial^{\alpha-{\alpha_1}}f\|_{L^2_{p,\mathcal{I}}}\|g\|_{L^2_{p,\mathcal{I}}}.
	\end{align*}
$\bullet$ Proof of (2): Since the proof is the same as (1), we omit it.
\end{proof}
The following lemma is necessary to prove the uniqueness of solutions.
\begin{lemma}\label{uniqueness}
	Assume $\bar{F}:=F_E^0+\bar{f}\sqrt{F_E^0}$ is another solution of \eqref{PR}. For sufficiently small $E(f)(t)$ and $E(\bar{f})(t)$, we then have
	$$
	\left|	\int_{\mathbb{R}^3}\int_{\mathbb{R}^3}\int_{0}^\infty \left\{\Gamma(f)-\Gamma(\bar{f})\right\}(f-\bar{f})\phi(\mathcal{I}) \,d\mathcal{I}dpdx\right|\le
	C \|f-\bar{f}\|^{2}_{L^2_{x,p,\mathcal{I}}}.$$
\end{lemma}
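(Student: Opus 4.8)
The approach is to exploit the \emph{quadratic} structure of the nonlinear perturbation. Recall from Proposition \ref{lin3} and Lemma \ref{lin2} that
\[
\Gamma(f)=\frac{U_\mu p^\mu}{cp^0}\sum_{i=1}^4\Gamma_i(f)+\frac{P(f)-f}{cp^0}\,\Phi,
\]
and that, up to smooth weights decaying like $|\mathbb{P}(p^0,\mathcal{I})|\,e^{-C'(1+\frac{\mathcal{I}}{mc^2})p^0}$ together with a factor $\sqrt{F_E^0}$ (or $1/\sqrt{F_E^0}$ in $\Gamma_4$), every term of $\Gamma(f)$ is a product of at least two factors taken from the list of ``small'' quantities $n-1$, $U$, $\frac{e}{n}-\widetilde e(T_0)$, $q^\mu$, $\Psi$, $\Psi_1$, $\Phi$, $\Phi_1$, all of which vanish when $f\equiv 0$. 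The plan is to write $\Gamma(f)-\Gamma(\bar f)$ in a telescoped form preserving this structure: for a schematic term $A(f)B(f)$ one uses
\[
A(f)B(f)-A(\bar f)B(\bar f)=A(f)\big(B(f)-B(\bar f)\big)+\big(A(f)-A(\bar f)\big)B(\bar f),
\]
and similarly for products of more factors, so that in each resulting summand exactly one of the small factors is replaced by the difference of the two evaluations, while the others are evaluated at $f$ or at $\bar f$.

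Two ingredients make this effective. First, every \emph{undifferenced} factor above --- and its spatial derivatives --- is controlled in $L^\infty_x$ by $\sqrt{E(f)(t)}$ or $\sqrt{E(\bar f)(t)}$ after the Sobolev embedding $H^2(\mathbb{R}^3_x)\subseteq L^\infty(\mathbb{R}^3_x)$; this is exactly the content of Lemma \ref{lem22} and Lemma \ref{lem2}. Second, each \emph{difference} of macroscopic quantities is Lipschitz in $f-\bar f$ in the near-equilibrium regime. For the quantities that are linear in $f$ (moments such as $\int p_\mu f\sqrt{F_E^0}$) this is immediate; for the nonlinear ones --- $n=\sqrt{1+\Psi}$, $1/n$, $T=\{\widetilde e\}^{-1}(e/n)$, $b$, $U^0=\sqrt{c^2+|U|^2}$, and the entries of the Hessian $\mathcal{Q}F(\theta)$ of Lemma \ref{QQ} --- one writes, with $f_s:=sf+(1-s)\bar f$,
\[
\mathcal{M}(f)-\mathcal{M}(\bar f)=\int_0^1\frac{d}{ds}\,\mathcal{M}(f_s)\,ds ,
\]
and notes that $E(f_s)(t)$ stays small along the segment, so all denominators occurring ($2+\Psi+2\sqrt{1+\Psi}$, $2+\Psi-\Psi^2+2\sqrt{1+\Psi}$, $b_\theta$, $\{\widetilde e\}'(T_\theta)$, $U^0$, $\dots$) remain bounded away from zero and all the smooth functions involved are Lipschitz on a fixed compact neighbourhood of equilibrium. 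One thus obtains, exactly as in Lemma \ref{lem22} and Lemma \ref{lem2},
\[
\big|\partial^\alpha\big(\mathcal{M}(f)-\mathcal{M}(\bar f)\big)\big|\le C\,p^0\sum_{|\alpha_1|\le|\alpha|}\big\|\partial^{\alpha_1}(f-\bar f)\big\|_{L^2_{p,\mathcal{I}}} .
\]

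Putting these together and repeating the weight bookkeeping from the proof of Lemma \ref{nonlinear1} (the Gaussian factor $e^{-C'(1+\frac{\mathcal{I}}{mc^2})p^0}$ absorbs all powers of $p^0$ and $\mathcal{I}$, and \eqref{phi condition} makes the resulting $(p,\mathcal{I})$-integrals finite), one arrives at the pointwise-in-$x$ bound
\[
\big|\Gamma(f)-\Gamma(\bar f)\big|(x,p,\mathcal{I})\le\big|\mathbb{P}(p^0,\mathcal{I})\big|\,e^{-C'(1+\frac{\mathcal{I}}{mc^2})p^0}\Big(\|f(x)\|_{L^2_{p,\mathcal{I}}}+\|\bar f(x)\|_{L^2_{p,\mathcal{I}}}\Big)\,\|(f-\bar f)(x)\|_{L^2_{p,\mathcal{I}}} .
\]
Pairing with $f-\bar f$, integrating in $(p,\mathcal{I})$ with Cauchy--Schwarz and \eqref{phi condition}, then integrating in $x$ and bounding $\|f(x)\|_{L^2_{p,\mathcal{I}}}+\|\bar f(x)\|_{L^2_{p,\mathcal{I}}}$ in $L^\infty_x$ by $C(\sqrt{E(f)(t)}+\sqrt{E(\bar f)(t)})$ via $H^2_x\subseteq L^\infty_x$, what remains is $C\|f-\bar f\|^2_{L^2_{x,p,\mathcal{I}}}$, as claimed. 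I expect the main obstacle to be the bookkeeping for the $\Gamma_4$ term, whose difference requires comparing the Hessian $\mathcal{Q}F(\theta)$ of Lemma \ref{QQ} along the two transitional paths $\theta\mapsto(n_\theta,U_\theta,(e/n)_\theta,q^\mu_\theta)$ attached to $f$ and to $\bar f$; this amounts to applying the mean value theorem to the (many) smooth entries of $\mathcal{Q}$ and to $F(\theta)$ on the small-energy domain, and it is the place where the estimates are most tedious. All other terms ($\Gamma_1,\Gamma_2,\Gamma_3$ and $\frac{P(f)-f}{cp^0}\Phi$) are handled by the same telescoping and are strictly simpler.
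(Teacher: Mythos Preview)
Your proposal is correct and is precisely the argument the paper has in mind: the paper's own proof is simply ``Since it can be proved in the same manner as in Lemma \ref{nonlinear1}, we omit it.'' Your telescoping of each quadratic factor, together with the Lipschitz control of the macroscopic quantities via Lemmas \ref{lem22}--\ref{lem2} and the weight bookkeeping from Lemma \ref{nonlinear1}, is exactly what ``the same manner'' amounts to.
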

\begin{proof}
	Since it can be proved in the same manner as in Lemma \ref{nonlinear1}, we omit it. 
\end{proof}
%
%
%
%
%
%
%
\begin{lemma}\label{nonlinear3}
Suppose $E(f)(t)$ is sufficiently small. Then we have
	$$
	\int_{\mathbb{R}^3}\int_{\mathbb{R}^3}\int_0^\infty \partial^{\alpha}\Gamma(f)\partial^{\alpha} P(f) \phi(\mathcal{I})\,d\mathcal{I}dpdx=0.
	$$
\end{lemma}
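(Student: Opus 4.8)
The plan is to exploit the structure $\Gamma(f)=\frac{U_\mu p^\mu}{cp^0}\sum_{i=1}^4\Gamma_i(f)+\frac{P(f)-f}{cp^0}\Phi$ from Proposition \ref{lin3} and to recognize that $\partial^\alpha P(f)$ is, for each fixed $(x,t)$, a linear combination of the basis functions $e_1,\dots,e_5$ with coefficients $\langle\partial^\alpha f,e_j\rangle_{L^2_{p,\mathcal{I}}}$. Thus it suffices to show that
\[
\int_{\mathbb{R}^3}\int_{\mathbb{R}^3}\int_0^\infty \partial^\alpha\Gamma(f)\,\psi(p,\mathcal{I})\sqrt{F_E^0}\,\phi(\mathcal{I})\,d\mathcal{I}\,dp\,dx = 0
\]
for $\psi\in\left\{1,\ (1+\tfrac{\mathcal{I}}{mc^2})p^i\ (i=1,2,3),\ cp^0(1+\tfrac{\mathcal{I}}{mc^2})-\widetilde{e}(T_0)\right\}$, i.e. against the collision invariants $1$, $(1+\tfrac{\mathcal{I}}{mc^2})p^\mu$. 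First I would move $\partial^\alpha$ inside and observe that $\partial^\alpha\Gamma(f)$ is a sum of products involving $\partial^{\alpha'}$ of the macroscopic fields times kinetic weights; the $x$-integral then reduces the claim to a pointwise-in-$(x,t)$ identity which is nothing but the statement that $\partial^\alpha$ applied to the right-hand side of \eqref{PR}, written back in terms of $F$ and $F_E$, integrates to zero against the collision invariants.

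The cleanest route is to avoid the messy expansion of the $\Gamma_i$ entirely and argue at the level of \eqref{PR}. Recall that by Lemma \ref{lin2},
\[
\Big(P(f)+\sum_{i=1}^4\Gamma_i(f)\Big)\sqrt{F_E^0}=\Big(1-p^\mu q_\mu\tfrac{1+\frac{\mathcal{I}}{mc^2}}{bmc^2}\Big)F_E-F_E^0,
\]
so that $\frac{U_\mu p^\mu}{cp^0}\big(P(f)+\sum\Gamma_i(f)\big)\sqrt{F_E^0}+\frac{-f}{cp^0}\Phi\sqrt{F_E^0}+\cdots$ reassembles precisely the collision term of \eqref{PR} divided by $\sqrt{F_E^0}$. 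More directly: $\big(L(f)+\Gamma(f)\big)\sqrt{F_E^0}=\frac{U_\mu p^\mu}{cp^0}\left\{\big(1-p^\mu q_\mu\tfrac{1+\mathcal{I}/mc^2}{bmc^2}\big)F_E-F\right\}$. Hence
\[
\Gamma(f)\sqrt{F_E^0}=\frac{U_\mu p^\mu}{cp^0}\left\{\Big(1-p^\mu q_\mu\tfrac{1+\frac{\mathcal{I}}{mc^2}}{bmc^2}\Big)F_E-F\right\}-\big(P(f)-f\big)\sqrt{F_E^0}.
\]
Now I would apply $\partial^\alpha$ (spatial/time derivatives only, since $\beta=0$ here) and integrate against $\psi(p,\mathcal{I})$ where $\psi$ is a collision invariant. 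The term $\int \partial^\alpha\big(P(f)-f\big)\sqrt{F_E^0}\,\psi\,\phi\,d\mathcal{I}\,dp\,dx$ vanishes because $P$ is the orthogonal projection onto the span of $\{\sqrt{F_E^0},(1+\mathcal{I}/mc^2)p^\mu\sqrt{F_E^0}\}$ (Lemma \ref{ortho}), so $\langle(P-I)g,\psi\sqrt{F_E^0}\rangle_{L^2_{p,\mathcal{I}}}=0$ for every $g$, in particular for $g=\partial^\alpha f$; and this holds before and after the $x$-integration.

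The remaining term is $\partial^\alpha$ of $\frac{U_\mu p^\mu}{cp^0}\left\{\big(1-p^\mu q_\mu\tfrac{1+\mathcal{I}/mc^2}{bmc^2}\big)F_E-F\right\}$ integrated against $\psi\phi(\mathcal{I})$ over $p$ and $x$. Since $\partial^\alpha$ commutes with the $p$- and $\mathcal{I}$-integration and with integration in $x$, I can pull $\partial^\alpha$ outside: it becomes $\partial^\alpha$ of
\[
\int_{\mathbb{R}^3}\int_{\mathbb{R}^3}\int_0^\infty \psi(p,\mathcal{I})\,\frac{U_\mu p^\mu}{cp^0}\left\{\Big(1-p^\mu q_\mu\tfrac{1+\frac{\mathcal{I}}{mc^2}}{bmc^2}\Big)F_E-F\right\}\phi(\mathcal{I})\,d\mathcal{I}\,dp\,dx.
\]
But $\psi\cdot\frac{U_\mu p^\mu}{cp^0}$ runs over exactly the quantities $\frac{U_\mu p^\mu}{cp^0}$ and $\frac{U_\mu p^\mu}{cp^0}(1+\mathcal{I}/mc^2)p^\nu$ (up to the constant shift by $\widetilde e(T_0)$ in $e_5$, which is itself a multiple of the first invariant), and after the $dp/p^0$ normalization these are precisely the weights $p^\mu$ and $p^\mu p^\nu(1+\mathcal{I}/mc^2)$ contracted with $U_\mu$ appearing in the conservation-law relations \eqref{conservation laws}. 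By Lemma \ref{explicit GM} (which applies once Proposition \ref{solvability gamma} guarantees a unique $T$, valid for small $E(f)(t)$), the $p$-$\mathcal{I}$ integral vanishes identically in $x$ and $t$; hence so does any $\partial^\alpha$ of it, and the $x$-integral of zero is zero.

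I expect the main obstacle to be purely bookkeeping: one must verify carefully that $e_5$'s inhomogeneous term $-\widetilde e(T_0)$, and the factor $\frac{U_\mu p^\mu}{cp^0}$ versus the bare $p^\mu$ in \eqref{conservation laws}, are matched correctly — i.e. that $\psi\,\frac{U_\mu p^\mu}{cp^0}\,\phi(\mathcal{I})\,dp = (\text{linear combo of }U_\mu p^\mu\text{ and }U_\mu p^\mu p^\nu(1+\mathcal{I}/mc^2))\,\phi(\mathcal{I})\,dp/p^0$, so that \eqref{conservation laws} applies to each of the five projection directions. Once that identification is made, the interchange of $\partial^\alpha$ with the $x,p,\mathcal{I}$ integrals is justified by the decay in \eqref{phi condition} and the bounds of Lemma \ref{lem22}–Lemma \ref{lem2}, and the proof is complete. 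One should also note the legitimacy of differentiating under the integral sign, which follows from the same integrability estimates used throughout Section 4.
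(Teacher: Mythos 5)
Your proof is correct and follows essentially the same route as the paper: you use the same key identity $\Gamma(f)\sqrt{F_E^0}=\frac{U_\mu p^\mu}{cp^0}\bigl\{(1-p^\mu q_\mu\tfrac{1+\mathcal{I}/mc^2}{bmc^2})F_E-F\bigr\}-\{P(f)-f\}\sqrt{F_E^0}$, kill the $\{P(f)-f\}$ contribution by orthogonality of the projection, and observe that the remaining $p,\mathcal{I}$-integral against each basis function of $\mathrm{Ran}\,P$ vanishes pointwise in $(x,t)$ by the conservation relations \eqref{conservation laws} (Lemma \ref{explicit GM}), which is exactly the paper's computation after it inserts the explicit form of $P(\partial^\alpha f)$. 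The only cosmetic difference is order of operations (you expand $P(\partial^\alpha f)$ into the basis first, the paper uses $\langle L(\partial^\alpha f),P(\partial^\alpha f)\rangle=0$ first), and your initial phrase ``it suffices to show $\int\partial^\alpha\Gamma(f)\psi\sqrt{F_E^0}\phi=0$'' should really assert the pointwise-in-$(x,t)$ vanishing of the $p,\mathcal{I}$-integral --- which you in fact do establish --- rather than only the $x$-integrated version, since the coefficients multiplying each $\psi_j\sqrt{F_E^0}$ in $\partial^\alpha P(f)$ depend on $x$.
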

\begin{proof}
Recall from Proposition \ref{lin3} that
	\begin{align*}
\left\{ L(f)+\Gamma(f) \right\}\sqrt{F_E^0}&=\frac{U_\mu p^\mu}{c p^0}\left\{\biggl(1-p^\mu q_\mu \frac{1+\frac{\mathcal{I}}{mc^2}}{bmc^2}\biggl)F_E-F\right\}\equiv \frac{1}{p^0}Q.
\end{align*}
We then have from Proposition \ref{pro} (1) that
	\begin{eqnarray}\label{gamma p}\begin{split}
		&		\int_{\mathbb{R}^3}\int_{\mathbb{R}^3}\int_0^\infty\partial^{\alpha} \Gamma(f)\partial^{\alpha} P(f) \phi(\mathcal{I})\,d\mathcal{I}\,dpdx\cr
		&=	\int_{\mathbb{R}^3}\int_{\mathbb{R}^3}\int_0^\infty \partial^{\alpha}\left\{	 \frac{1}{p^0\sqrt{F_E^0}}Q-L(f)  \right\}\partial^{\alpha}P(f) \phi(\mathcal{I})\,d\mathcal{I}\,dpdx\cr
&=			\int_{\mathbb{R}^3}\int_{\mathbb{R}^3}\int_0^\infty  	 \frac{1}{p^0\sqrt{F_E^0}}\partial^{\alpha}Q P(\partial^{\alpha}f) \phi(\mathcal{I})\,d\mathcal{I}dpdx-\langle L(\partial^{\alpha}f),P(\partial^{\alpha}f) \rangle_{L^2_{x,p,\mathcal{I}}}\cr
	&=	\int_{\mathbb{R}^3}\int_{\mathbb{R}^3}\int_0^\infty \partial^{\alpha}Q\frac{P(\partial^{\alpha}f)}{\sqrt{F_E^0}} \phi(\mathcal{I})\,d\mathcal{I}\,\frac{dp}{p^0}dx.
	\end{split}\end{eqnarray}
Here $P(f)$ is given in \eqref{Pf} as 
\begin{equation}\label{Pf ex}
 P(f) = a(t,x)\sqrt{F_E^0}+b(t,x)\cdot\left(1+\frac{\mathcal{I}}{mc^2}\right) p\sqrt{F_E^0}+c(t,x)\left\{cp^0\left(1+\frac{\mathcal{I}}{mc^2}\right)-\widetilde{e}(T_0)\right\}\sqrt{F_E^0} 
\end{equation}
where 
\begin{align*}
	a(t,x)&= \int_{\mathbb{R}^3}\int_0^\infty f\sqrt{F_E^0}  \phi(\mathcal{I}) \,d\mathcal{I}\,dp,\qquad b(t,x)=\frac{1}{b_0m}\int_{\mathbb{R}^3}\int_0^\infty p   f\sqrt{F_E^0}\left(1+\frac{\mathcal{I}}{mc^2}\right)\phi(\mathcal{I})\,d\mathcal{I}dp,\cr
	&c(t,x)= \frac{1}{k_BT_0^2\left\{\widetilde{e}\right\}^{\prime}(T_0)}\int_{\mathbb{R}^3}\int_0^\infty \left\{cp^0\left(1+\frac{\mathcal{I}}{mc^2}\right)-\widetilde{e}(T_0)\right\} f\sqrt{F_E^0} \phi(\mathcal{I})\,d\mathcal{I}dp.
\end{align*}
Inserting \eqref{Pf ex} into \eqref{gamma p}, one finds 
\begin{eqnarray*}
	&&	\int_{\mathbb{R}^3}\int_{\mathbb{R}^3}\int_0^\infty \partial^{\alpha}Q\frac{P(\partial^{\alpha}f)}{\sqrt{F_E^0}} \phi(\mathcal{I})\,d\mathcal{I}\,\frac{dp}{p^0}dx\cr
	&&=\int_{\mathbb{R}^3}  \big\{   \partial^\alpha a(t,x)-\widetilde{e}(T_0)\partial^\alpha c(t,x) \big\} \partial^{\alpha}\left\{\int_{\mathbb{R}^3}\int_0^\infty Q  \phi(\mathcal{I})\,d\mathcal{I}\frac{dp}{p^0}\right\}dx\cr
&&+\int_{\mathbb{R}^3}      \partial^\alpha b(t,x)  \cdot  \partial^{\alpha}\left\{\int_{\mathbb{R}^3}\int_0^\infty pQ    \left(1+\frac{\mathcal{I}}{mc^2}\right) 
 \phi(\mathcal{I})\,d\mathcal{I}\frac{dp}{p^0}\right\}dx\cr
&&+c\int_{\mathbb{R}^3}      \partial^\alpha c(t,x)    \partial^{\alpha}\left\{\int_{\mathbb{R}^3}\int_0^\infty p^0Q    \left(1+\frac{\mathcal{I}}{mc^2}\right) 
 \phi(\mathcal{I})\,d\mathcal{I}\frac{dp}{p^0}\right\}dx
\end{eqnarray*}
which, combined with \eqref{conservation laws} gives the desired result.
\end{proof}

\section{Proof of theorem \ref{main3}}
%
%
%
%
\subsection{Local in time existence}
Using Lemma \ref{nonlinear1} and Lemma \ref{uniqueness}, the local in time existence and uniqueness of solutions to \eqref{PR} can be obtained by the standard argument \cite{Guo whole,Guo VMB}:
\begin{proposition}
	Let $N\geq 3$ and $F_{0}=F_E^0+\sqrt{F_E^0}f_{0}$ be positve. 
	Then there exist $M_{0}>0$ and $T_{*}>0$ such that if $T_{*}\le\frac{M_{0}}{2}$ and $E(f_{0})\le \frac{M_{0}}{2}$, there is a unique solution $F(x,p,t)$ to \eqref{PR} such that the energy functional is continuous in $[0,T_{*})$ and uniformly bounded:
	$$
	\sup_{0\le t\le T_{*}}E(f)(t)\le M_{0}.
	$$
\end{proposition}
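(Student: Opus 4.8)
The plan is to establish local existence by a standard iteration scheme adapted to the relativistic BGK setting, following the approach of Guo for the Vlasov--Maxwell--Boltzmann system. First I would linearize around $F_E^0$ as in Proposition~\ref{lin3}, so that the unknown is the perturbation $f$ solving
\begin{align*}
\partial_t f+\hat{p}\cdot\nabla_x f=\frac{1}{\tau}\big(L(f)+\Gamma(f)\big),\qquad f(0,x,p)=f_0(x,p).
\end{align*}
Recalling $L(f)=P(f)-f$, I would rewrite the equation as $\partial_t f+\hat p\cdot\nabla_x f+\frac1\tau f=\frac1\tau\big(P(f)+\Gamma(f)\big)$ and set up the iteration
\begin{align*}
\partial_t f^{n+1}+\hat p\cdot\nabla_x f^{n+1}+\tfrac1\tau f^{n+1}=\tfrac1\tau\big(P(f^n)+\Gamma(f^n)\big),\qquad f^{n+1}(0)=f_0,
\end{align*}
starting from $f^0\equiv 0$ (or $f^0=f_0$). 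Each step is a linear transport equation with a damping term and a prescribed source, solved explicitly along characteristics $X(s)=x-\hat p(t-s)$, which immediately gives the representation
\begin{align*}
f^{n+1}(t,x,p)=e^{-t/\tau}f_0(x-\hat p t,p)+\frac1\tau\int_0^t e^{-(t-s)/\tau}\big(P(f^n)+\Gamma(f^n)\big)(s,x-\hat p(t-s),p)\,ds.
\end{align*}
Because $\hat p$ is bounded by $c$ and independent of the other variables, the transport operator is skew-adjoint in $L^2_{x,p,\mathcal I}$ and commutes with $\partial^\alpha_\beta$ up to lower-order terms coming from $\partial_\beta\hat p$, so the energy method applies cleanly.

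The core of the argument is a uniform-in-$n$ a priori bound. I would differentiate the iteration equation by $\partial^\alpha_\beta$ for $|\alpha|+|\beta|\le N$, pair with $\partial^\alpha_\beta f^{n+1}$ in $L^2_{x,p,\mathcal I}$, and sum. The transport term integrates to zero (or contributes a harmless commutator term controlled by lower derivatives and the state-density weight via \eqref{phi condition}); the damping term gives $-\frac1\tau\|\partial^\alpha_\beta f^{n+1}\|^2_{L^2_{x,p,\mathcal I}}$; the $P(f^n)$ term is bounded using Lemma~\ref{ortho} (boundedness of $P$ on $L^2_{p,\mathcal I}$) and Cauchy--Schwarz by $C\,E(f^n)^{1/2}E(f^{n+1})^{1/2}$; and the nonlinear term is bounded using Lemma~\ref{nonlinear1} by $C\,E(f^n)\,E(f^{n+1})^{1/2}$. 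Together with a Cauchy--Schwarz/Young step this yields a differential inequality of the form
\begin{align*}
\frac{d}{dt}E(f^{n+1})(t)\le C\big(E(f^n)(t)+1\big)E(f^{n+1})(t)+C\,E(f^n)(t),
\end{align*}
or more cleanly $\frac{d}{dt}E(f^{n+1})\le C_1\,E(f^{n+1})+C_2\,E(f^n)^2+\dots$; by a Gronwall argument, if $\sup_{[0,T_*]}E(f^n)\le M_0$ and both $T_*$ and $E(f_0)$ are at most $M_0/2$ with $M_0$ chosen small and $C$-dependent, then $\sup_{[0,T_*]}E(f^{n+1})\le M_0$ as well. This closes the uniform bound by induction.

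Next I would show $\{f^n\}$ is Cauchy: subtracting consecutive iterates, $g^{n+1}:=f^{n+1}-f^n$ solves the same linear transport-damping equation with source $\frac1\tau\big(P(g^n)+\Gamma(f^n)-\Gamma(f^{n-1})\big)$; the difference of nonlinear terms is controlled as in Lemma~\ref{uniqueness} (and its higher-order analogue) by $C\,M_0^{1/2}\,E(g^n)^{1/2}E(g^{n+1})^{1/2}$, so that on a possibly smaller time interval $\sup_{[0,T_*]}E(g^{n+1})\le \tfrac12\sup_{[0,T_*]}E(g^n)$, giving geometric contraction and hence convergence in $C([0,T_*];H^N)$ to a limit $f$. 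Passing to the limit in the iteration identity shows $f$ solves \eqref{LAW}, and the continuity of $E(f)(t)$ on $[0,T_*)$ follows from strong convergence plus the standard weak-lower-semicontinuity/energy-inequality argument; uniqueness is exactly Lemma~\ref{uniqueness} combined with Gronwall. Finally, $F:=F_E^0+f\sqrt{F_E^0}$ solves \eqref{PR}; positivity of $F$ on $[0,T_*)$ is preserved from the positivity of $F_0$ because along characteristics $F$ satisfies a linear ODE $\partial_t F+\hat p\cdot\nabla_x F=\frac{U_\mu p^\mu}{c\tau p^0}\big(\widetilde F_E-F\big)$ with $U_\mu p^\mu>0$, so $F(t)\ge e^{-\int_0^t \frac{U_\mu p^\mu}{c\tau p^0}ds}F_0\ge 0$ (one must check $U_\mu p^\mu/(cp^0)$ stays bounded, which holds for small $E(f)$ by Lemma~\ref{lem2}). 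The main obstacle I anticipate is \emph{not} the abstract iteration but verifying that all the momentum- and $\mathcal I$-weighted integrals arising from $\Gamma(f)$, $P(f)$, and the $\partial_\beta$-derivatives remain finite and small: this is where the growth condition \eqref{phi condition} on $\phi$ and the Gaussian decay of $F_E^0$ must be used carefully, precisely as packaged in Lemma~\ref{nonlinear1}, to absorb the polynomial weights $\mathbb{P}(p^0,\mathcal I)$ against $e^{-C'(1+\mathcal I/mc^2)p^0}$.
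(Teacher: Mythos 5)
The overall strategy—Picard-type iteration for the linearized equation \eqref{LAW}, uniform-in-$n$ energy bounds via Lemma~\ref{nonlinear1}, contraction via Lemma~\ref{uniqueness}, and Gronwall for uniqueness—is exactly the ``standard argument'' of \cite{Guo whole,Guo VMB} that the paper invokes without giving details, so you have not taken a genuinely different route; you have simply unpacked the cited one.

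Two issues, one minor and one substantive. Minor: you describe the commutator $[\partial_\beta,\hat p\cdot\nabla_x]$ as producing ``lower-order terms.'' It actually produces terms with one fewer $p$-derivative but one more $x$-derivative, i.e.\ the same total order $|\alpha|+|\beta|$; these are controllable in the energy $E(f)$ (and this is precisely what the weighted ordering of the constants $C_{|\beta|}$ in the paper's global estimate is designed to absorb), but they are not lower-order. Substantive: your positivity argument at the end is wrong. You drop the source term to claim $F(t)\ge e^{-\int_0^t \nu}F_0\ge 0$, which requires the source $\nu\widetilde F_E$ to be pointwise nonnegative. But $\widetilde F_E=\bigl(1-p^\mu q_\mu\tfrac{1+\mathcal I/mc^2}{bmc^2}\bigr)F_E$ changes sign: whenever $q\neq 0$ the prefactor becomes negative for large $|p|$ in the appropriate direction. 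The paper explicitly states in Section 1.1 that ``the relativistic BGK model \eqref{PR} does not guarantee the positivity of solutions,'' which is why the smallness of $E(f)$ is needed to keep $n$ and $e$ positive at the macroscopic level. Since the proposition's conclusion asserts only existence, uniqueness, and the energy bound (positivity appears only as a hypothesis on $F_0$), this paragraph should simply be deleted rather than repaired; as written it claims something false about the model.
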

\subsection{Global in time existence}
Recall from Lemma \ref{ortho} that $P(f)$ is an orthonormal projection defined by
\begin{equation}\label{P(f)}
P(f)=\biggl\{a(t,x)+b(t,x)\cdot\Big(1+\frac{\mathcal{I}}{mc^2}\Big) p+c(t,x)\biggl(cp^0\Big(1+\frac{\mathcal{I}}{mc^2}\Big)-\widetilde{e}(T_0)\biggl)\biggl\}\sqrt{F_E^0}
\end{equation}
where 
\begin{align*}
a(t,x)&= \int_{\mathbb{R}^3}\int_0^\infty f\sqrt{F_E^0}  \phi(\mathcal{I}) \,d\mathcal{I}\,dp,\qquad b(t,x)=\frac{1}{b_0m}\int_{\mathbb{R}^3}\int_0^\infty p   f\sqrt{F_E^0}\left(1+\frac{\mathcal{I}}{mc^2}\right)\phi(\mathcal{I})\,d\mathcal{I}dp,\cr
&c(t,x)= \frac{1}{k_BT_0^2\left\{\widetilde{e}\right\}^{\prime}(T_0)}\int_{\mathbb{R}^3}\int_0^\infty \left\{cp^0\left(1+\frac{\mathcal{I}}{mc^2}\right)-\widetilde{e}(T_0)\right\} f\sqrt{F_E^0} \phi(\mathcal{I})\,d\mathcal{I}dp.
\end{align*}
\noindent\newline
Decompose $f$ as
\begin{equation}\label{decom}
f=P(f)+\{I-P\}(f)
\end{equation}
and insert \eqref{decom} into \eqref{LAW} to see that
\begin{align}\label{macro-micro1}\begin{split}
\{\partial_{t}+\hat{p}\cdot\nabla_{x}\}P(f)&=\{-\partial_{t}-\hat{p}\cdot\nabla_{x}+L\}\{I-P\}(f)+\Gamma (f)\cr
&\equiv l\{I-P\}(f)+h(f)
\end{split}\end{align}
where we used $L[P(f)]=0$ (see Prosposition \ref{pro}). Observe that 
\begin{eqnarray*}
&&\{\partial_{t}+\hat{p}\cdot\nabla_{x}\}P(f)\cr
&&=\{\partial_{t}+\hat{p}\cdot\nabla_{x}\}\left\{a(t,x)+b(t,x)\cdot\left(1+\frac{\mathcal{I}}{mc^2}\right) p+c(t,x)\left(cp^0\left(1+\frac{\mathcal{I}}{mc^2}\right)-\widetilde{e}(T_0)\right)\right\}\sqrt{F_E^0}\cr
&&=\biggl\{\partial_{t}\big\{a(t,x)-\widetilde{e}(T_0)c(t,x)\big\}+c\sum_{i=1}^{3} \partial_{x_{i}}\big\{a(t,x)-\widetilde{e}(T_0)c(t,x)\big\}\frac{ p^{i}}{p^{0}}
+\sum_{i=1}^{3} \left(\partial_{t}b_{i}(t,x)
+c^2\partial_{x_{i}} c(t,x)\right)
\cr
&&\times \left(1+\frac{\mathcal{I}}{mc^2}\right)p^i+c\sum_{j=1}^{3}\sum_{i=1}^{3}\partial_{x_{i}}b_{j}(t,x)\left(1+\frac{\mathcal{I}}{mc^2}\right)\frac{ p^{i}p^{j}}{p^{0}}  +c\partial_{t}c(t,x)\left(1+\frac{\mathcal{I}}{mc^2}\right)p^{0}\biggl\}\sqrt{F_E^0}\cr
&&:= \partial_{t}\tilde{a}(t,x) e_{a_0}+c\sum_{i=1}^{3} \partial_{x_{i}}\tilde{a}(t,x) e_{a_i}+\sum_{i=1}^{3} (\partial_{t}b_{i}(t,x)+c\partial_{x_{i}}\tilde{c}(t,x))e_{bc_i}
+c\sum_{j=1}^{3}\sum_{i=1}^{3} \partial_{x_{i}}b_{j}(t,x) e_{ij}\cr
&&+ \partial_{t}\tilde{c}(t,x) e_c
\end{eqnarray*}
where 
$$\tilde{a}(t,x):=a(t,x)-\widetilde{e}(T_0)c(t,x),\qquad \tilde{c}(t,x):=cc(t,x)$$ 
and $e_{a_0},\cdots,e_c$ denote 
\begin{align}\label{basis}\begin{split}
\{e_{a_0}, e_{a_i}, e_{bc_i}, e_{ij}, e_{c}\}&=\biggl\{1,\ \frac{p^{i}}{p^{0}} ,\ \Big(1+\frac{\mathcal{I}}{mc^2}\Big)p^{i} ,\ \Big(1+\frac{\mathcal{I}}{mc^2}\Big)\frac{p^{i}p^{j}}{p^{0}} ,\ \Big(1+\frac{\mathcal{I}}{mc^2}\Big)p^{0}\biggl\}\sqrt{F_E^0} 
\end{split}	\end{align}
for $(1\le i,j\le 3)$. In terms of the basis \eqref{basis}, \eqref{macro-micro1} leads to the following relation, that is often called a micro-macro system:
\begin{lemma}\label{M-M}
	Let $l_{a_0}\cdots l_{c}$ and $h_{a_0},\cdot,h_{c}$ denote the inner product of $l\{I-P\}(f)$ and $h(f)$ with the corresponding basis (\ref{basis}). Then we have
	\begin{enumerate}
		\item $\partial_{t}\tilde{a}(t,x)=l_{a_0}+h_{a_0}.$
		\item $\partial_{t}\tilde{c}(t,x)=l_{c}+h_{c}.$
		\item $\partial_{t}b_{i}(t,x)+c\partial_{x_{i}}\tilde{c}(t,x)=l_{bc_i}+h_{bc_i}.$
		\item $c\partial_{x_{i}}\tilde{a}(t,x)=l_{ai}+h_{ai}.$
		\item $c(1-\delta_{ij})\partial_{x_{i}}b_{j}(t,x)+c\partial_{x_{j}}b_{i}(t,x)=l_{ij}+h_{ij}.$
	\end{enumerate}
\end{lemma}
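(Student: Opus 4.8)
The plan is to derive the system (1)--(5) by testing the macroscopic equation \eqref{macro-micro1} against the basis \eqref{basis} and inverting the resulting small algebraic systems. First I would record \eqref{macro-micro1} itself: substituting the decomposition \eqref{decom} into the linearized equation \eqref{LAW} and using $L[P(f)]=0$ from Proposition \ref{pro} gives $\{\partial_t+\hat p\cdot\nabla_x\}P(f)=l\{I-P\}(f)+h(f)$ with $l=-\partial_t-\hat p\cdot\nabla_x+L$ and $h(f)=\Gamma(f)$. Next, the left-hand side is expanded exactly as in the display preceding the lemma: writing $P(f)=a\sqrt{F_E^0}+b\cdot(1+\frac{\mathcal I}{mc^2})p\sqrt{F_E^0}+c(cp^0(1+\frac{\mathcal I}{mc^2})-\widetilde e(T_0))\sqrt{F_E^0}$ and applying $\partial_t+\hat p^i\partial_{x_i}$ with $\hat p^i=cp^i/p^0$, one collects the result according to the $(p,\mathcal I)$-profiles in \eqref{basis}. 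This isolates the five coefficient groups $\partial_t\widetilde a$, $c\,\partial_{x_i}\widetilde a$, $\partial_t b_i+c\,\partial_{x_i}\widetilde c$, $c\,\partial_{x_i}b_j$, $\partial_t\widetilde c$, where $\widetilde a=a-\widetilde e(T_0)c$ and $\widetilde c=cc$; the combination $\widetilde a$ appears precisely because of the orthogonality $\langle\sqrt{F_E^0},(cp^0(1+\frac{\mathcal I}{mc^2})-\widetilde e(T_0))\sqrt{F_E^0}\rangle_{L^2_{p,\mathcal I}}=0$ already exploited in Lemma \ref{ortho}.

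Then I would take the $L^2_{p,\mathcal I}$ inner product of \eqref{macro-micro1} with each element of \eqref{basis}. Thanks to the parity of the profiles in $p$ --- namely $p^i/p^0$ and $(1+\frac{\mathcal I}{mc^2})p^i$ are odd, while $1$, $(1+\frac{\mathcal I}{mc^2})p^0$ and $(1+\frac{\mathcal I}{mc^2})(p^i)^2/p^0$ are even, and $(1+\frac{\mathcal I}{mc^2})p^ip^j/p^0$ with $i\ne j$ is orthogonal to all of these --- the Gram matrix of \eqref{basis} is block diagonal: three $2\times2$ blocks pairing $e_{a_i}$ with $e_{bc_i}$, one $5\times5$ block in the radial-even directions $e_{a_0},e_{11},e_{22},e_{33},e_c$, and three scalar blocks $e_{12},e_{13},e_{23}$. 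All entries reduce to equilibrium moments computed in Lemma \ref{computation F_E^0} (for instance $\langle e_{a_k},e_{bc_i}\rangle=\delta_{ki}k_BT_0/c$, $\langle e_{bc_k},e_{bc_i}\rangle=\delta_{ki}b_0m$, $\langle e_{a_0},e_{ii}\rangle=k_BT_0/c$, $\langle e_{a_0},e_c\rangle=\widetilde e(T_0)/c$), together with a few further Gaussian moments obtained by the same integration-by-parts device; each block has strictly positive determinant by Cauchy--Schwarz. Inverting the blocks puts a single macroscopic derivative on the left of each equation, and the right-hand sides are by definition the quantities $l_{a_0},\dots,l_c$ and $h_{a_0},\dots,h_c$, i.e. the corresponding linear combinations of the pairings of $l\{I-P\}(f)$ and $h(f)$ with \eqref{basis} (equivalently, their pairings with the dual basis). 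This yields (1)--(5); the symmetrized form $c(1-\delta_{ij})\partial_{x_i}b_j+c\partial_{x_j}b_i$ in (5) records that $e_{ij}=e_{ji}$, so the moment against $e_{ij}$ only sees the symmetric part of $\nabla_x b$.

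I do not expect a genuine obstacle; the lemma is essentially bookkeeping. The one point requiring a little care is that the functions listed in \eqref{basis} are not linearly independent ($e_{ij}=e_{ji}$, and $\sum_i e_{ii}$ is a linear combination of $e_c$ and $\frac{1}{p^0}(1+\frac{\mathcal I}{mc^2})\sqrt{F_E^0}$ because $(p^0)^2=(mc)^2+|p|^2$), so the full Gram matrix is singular; but this redundancy is exactly the antisymmetric combination that drops out in (5), and it is harmless since each genuinely independent block is invertible. Finally, the estimates on $l_{\cdot}$ and $h_{\cdot}$ needed in Section 5 follow immediately from this representation, the exponential decay of the profiles in \eqref{basis}, and Lemmas \ref{lem2} and \ref{nonlinear1}.
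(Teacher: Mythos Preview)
Your approach is correct and is essentially what the paper has in mind (the paper simply writes ``Since the proof is straightforward, we omit it''): expand $\{\partial_t+\hat p\cdot\nabla_x\}P(f)$ in the profiles \eqref{basis}, observe that the right-hand side of \eqref{macro-micro1} therefore lies in the same finite-dimensional span, and read off the coefficients---whether by your Gram-matrix inversion or, equivalently, by linear independence. One small correction: your parenthetical about $\sum_i e_{ii}$ does not exhibit a linear dependence among the elements of \eqref{basis}, since $\frac{1}{p^0}\bigl(1+\frac{\mathcal I}{mc^2}\bigr)\sqrt{F_E^0}$ is \emph{not} in that list; the only genuine redundancy is $e_{ij}=e_{ji}$, so the $14$ independent profiles have a nonsingular Gram matrix and the argument goes through without further caveats.
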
 
\begin{proof}
	Since the proof is straightforward, we omit it.
\end{proof}
Using the same line of the argument as in \cite[Theorem 5.4]{Guo whole}, one finds
\begin{equation}\label{coercivitywhole}
\sum_{0<|\alpha|\le N}\left\langle L(\partial^{\alpha}f),\partial^{\alpha}f\right\rangle_{L^2_{x,p,\mathcal{I}}}\le-\delta\sum_{0<|\alpha|\le N}\|\partial^{\alpha}f\|^{2}_{L^2_{x,p,\mathcal{I}}}-C\frac{d}{dt}\int_{\mathbb{R}^3}\left(\nabla_{x}\cdot b(t,x)\right)c(t,x) \,dx.
\end{equation}
To extend the local in time solution to the global one, we take inner product of \eqref{LAW} with $f$ and use Proposition $\ref{pro}$ (2), Lemma \ref{nonlinear1} and Lemma \ref{nonlinear3} to obtain
\begin{align*} 
\frac{d}{dt}\|f\|^{2}_{L^2_{x,p,\mathcal{I}}}+\delta_1\|\{I-P\}f\|^2_{L^2_{x,p,\mathcal{I}}}\le C\sqrt{E(f)(t)}\mathcal{D}(t).
 \end{align*}
Applying $\partial^\alpha_\beta$ to \eqref{LAW}, taking $L^2_{x,p,\mathcal{I}}$ inner product with $\partial^{\alpha}_\beta f$, and employing Lemma \ref{nonlinear1} and \eqref{coercivitywhole}, we have
\begin{equation*} 
\frac{d}{dt}\|\partial^{\alpha}f\|^{2}_{L^2_{x,p,\mathcal{I}}}-C\frac{d}{dt}\int_{\mathbb{R}^3}(\nabla_{x}\cdot b)c\,dx+\delta_2\sum_{0<|\alpha|\le N}\|\partial^{\alpha}f\|^{2}_{L^2_{x,p,\mathcal{I}}}\le C\sqrt{E(f)(t)}\mathcal{D}(t)
\end{equation*}
for $\alpha\neq 0$, $\beta=0$, and
\begin{equation*} 
\frac{d}{dt}\|\partial^{\alpha}_{\beta}f\|^{2}_{L^2_{x,p,\mathcal{I}}}+\delta_3\|\partial^{\alpha}_{\beta}f\|^{2}_{L^2_{x,p,\mathcal{I}}}\le C\sum_{|\beta_1|<|\beta|}\biggl(\|\partial^{\alpha}_{\beta_1}f\|^{2}_{L^2_{x,p,\mathcal{I}}}+\|\nabla_x\partial^{\alpha}_{\beta_1}f\|^{2}_{L^2_{x,p,\mathcal{I}}}\biggl) +C\sqrt{E(f)(t)}\mathcal{D}(t)
\end{equation*}
for $\alpha,\beta\neq0$. Combining above estimates, we obtain the following energy estimate \cite{Guo whole}:
 \begin{align*}
 &\frac{d}{dt}\biggl\{C_{1}\|f\|^{2}_{L^2_{x,p,\mathcal{I}}}+\sum_{0<|\alpha|+|\beta|\le N}C_{|\beta|}\|\partial^{\alpha}_{\beta}f\|^{2}_{L^2_{x,p,\mathcal{I}}}-C_{2}\int_{\mathbb{R}^3}(\nabla_{x}\cdot b(t,x))c(t,x)\,dx\biggl\}+\delta_N\mathcal{D}(t)\cr 
 &\le C\sqrt{E(f)(t)} \mathcal{D}(t)
 \end{align*}
 for some positive constants $C_1$, $C_{|\beta|}$, $C_2$ and $\delta_{N}$.
Then, the standard continuity argument \cite{Guo whole} gives the global in time existence of solutions satisfying
	$$
E_N(f)(t)+\int_0^t \mathcal{D}_N(f)(s) ds\le CE_N(f_0).
$$

%

\subsection{Proof of the asymptotic behaviors (Theorem \ref{main3} (1)--(3))} 
We start with the derivation of the local conservation laws for the linearized relativistic BGK model \eqref{LAW}.
 \begin{lemma}\label{balance} The following relations hold
 	\begin{align*}
 		& \partial_t a(t,x)+k_BT_0\nabla_x \cdot b(t,x)=\Big\langle -\hat{p}\cdot\nabla_x\left\{I-P\right\}(f)+\frac{1}{\tau}\Gamma(f),\sqrt{F_E^0}  \Big\rangle_{L^2_{p,\mathcal{I}}},  \cr
 	& \partial_t b(t,x)+ \frac{k_BT_0}{b_0m }\nabla_x\biggl\{a(t,x)+k_BT_0c(t,x)\biggl\} \cr
 	&=\frac{1}{b_0m}\biggl\langle -\hat{p}\cdot\nabla_x\left\{I-P\right\}(f)+\frac{1}{\tau}\Gamma(f),\biggl(1+\frac{\mathcal{I}}{mc^2}\biggl)p\sqrt{F_E^0}  \biggl\rangle_{L^2_{p,\mathcal{I}}},\cr
 	&\partial_t c(t,x) +\frac{ k_B }{ \{\widetilde{e}\}^\prime(T_0)}\nabla_x\cdot b(t,x) \cr
 	&=\frac{1}{k_BT_0^2\{\widetilde{e}\}^\prime(T_0)} \biggl\langle -\hat{p}\cdot\nabla_x\left\{I-P\right\}(f)+\frac{1}{\tau}\Gamma(f),\biggl\{cp^0\biggl(1+\frac{\mathcal{I}}{mc^2}\biggl)-\widetilde{e}(T_0)\biggl\}\sqrt{F_E^0}   \biggl\rangle_{L^2_{p,\mathcal{I}}}.
 	\end{align*}

 \end{lemma}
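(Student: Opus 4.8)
The plan is to derive these three identities by integrating the linearized equation \eqref{LAW} against the collision invariants $\sqrt{F_E^0}$, $(1+\frac{\mathcal{I}}{mc^2})p\sqrt{F_E^0}$, and $\{cp^0(1+\frac{\mathcal{I}}{mc^2})-\widetilde{e}(T_0)\}\sqrt{F_E^0}$ in the variables $(p,\mathcal{I})$, and then recognizing that these are exactly (up to normalization) the basis elements $e_1,\dots,e_5$ spanning $N$. First I would use the decomposition $f=P(f)+\{I-P\}(f)$ and apply $\langle\,\cdot\,,e_i\rangle_{L^2_{p,\mathcal{I}}}$ to both sides of $\partial_t f+\hat{p}\cdot\nabla_x f=\frac{1}{\tau}(L(f)+\Gamma(f))$. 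On the right-hand side, since $L(f)=P(f)-f=-\{I-P\}(f)$ and $\{I-P\}(f)$ is orthogonal to $N$ by Lemma \ref{ortho}, the term $\langle L(f),e_i\rangle_{L^2_{p,\mathcal{I}}}$ vanishes; hence the collision contribution reduces to $\frac{1}{\tau}\langle\Gamma(f),e_i\rangle_{L^2_{p,\mathcal{I}}}$. On the transport side, $\langle\hat{p}\cdot\nabla_x f,e_i\rangle = \langle\hat{p}\cdot\nabla_x P(f),e_i\rangle + \langle\hat{p}\cdot\nabla_x\{I-P\}(f),e_i\rangle$; the first piece produces the explicit macroscopic gradient terms once we plug in the form \eqref{P(f)} of $P(f)$ and carry out the momentum integrals, while the second piece stays as the stated microscopic remainder. (Note the right-hand sides as written keep the full transport term $-\hat{p}\cdot\nabla_x\{I-P\}(f)$ rather than moving the $\partial_t$ part over; one should be slightly careful about which terms are grouped where, but this is a matter of bookkeeping.)

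The key computational step is evaluating the moments of $P(f)$ against the basis. Writing $P(f)=a\sqrt{F_E^0}+b\cdot(1+\frac{\mathcal{I}}{mc^2})p\sqrt{F_E^0}+c\{cp^0(1+\frac{\mathcal{I}}{mc^2})-\widetilde{e}(T_0)\}\sqrt{F_E^0}$ and using the orthonormality $\langle e_i,e_j\rangle_{L^2_{p,\mathcal{I}}}=\delta_{ij}$ from Lemma \ref{ortho}, most cross terms drop out. The surviving couplings come from pairing the spatial gradient $\hat{p}\cdot\nabla_x$ (which carries one factor of $\hat{p}^k=cp^k/p^0$) with the basis elements: e.g. $\langle \frac{cp^k}{p^0}(1+\frac{\mathcal{I}}{mc^2})p^k\sqrt{F_E^0},\sqrt{F_E^0}\rangle_{L^2_{p,\mathcal{I}}}$, which by the isotropy of $F_E^0$ and Lemma \ref{computation F_E^0}(1) equals $\frac{c}{3}\int (1+\frac{\mathcal{I}}{mc^2})|p|^2 F_E^0\,\phi\,d\mathcal{I}\frac{dp}{p^0}\cdot(\text{per component}) = k_BT_0$, giving the $k_BT_0\nabla_x\cdot b$ term in the first identity. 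Similarly, the $\nabla_x(a+k_BT_0 c)$ term in the second identity and the $\frac{k_B}{\{\widetilde{e}\}'(T_0)}\nabla_x\cdot b$ term in the third come from pairing $\hat{p}^k$ against the momentum-weighted basis and applying parts (1)--(5) of Lemma \ref{computation F_E^0}; the combination $a+k_BT_0 c$ arises because $cp^0(1+\frac{\mathcal{I}}{mc^2})$ contributes both the $\widetilde{e}(T_0)$-shifted part absorbed by $c$ and the constant part that recombines with $a$, together with the normalizations $b_0 m$ and $k_BT_0^2\{\widetilde{e}\}'(T_0)$ built into the definitions of $b(t,x)$ and $c(t,x)$.

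The main obstacle — though it is more tedious than conceptually deep — is keeping track of all the normalization constants and verifying that the off-diagonal moment integrals really collapse as claimed, in particular that the isotropy of $F_E^0$ kills the odd-in-$p$ contributions and that the even ones reproduce exactly the coefficients $k_BT_0$, $\frac{k_BT_0}{b_0 m}$, $\frac{k_B}{\{\widetilde{e}\}'(T_0)}$ stated. This is precisely where Lemma \ref{computation F_E^0} does the heavy lifting: identities (1)--(3) supply the second moments of $F_E^0$, and (4)--(5) convert $b_0$ into $\widetilde{e}(T_0)+k_BT_0$ so the gradient arguments line up. Once the moment table is in hand, the three relations follow by collecting terms, with the $\Gamma(f)$ and $-\hat{p}\cdot\nabla_x\{I-P\}(f)$ contributions simply carried along on the right-hand side without further manipulation. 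Since the paper elsewhere dispatches such computations with ``the proof is straightforward, we omit it,'' I would present the setup and the key moment evaluations and leave the remaining arithmetic to the reader.
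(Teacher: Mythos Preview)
Your proposal is correct and follows essentially the same approach as the paper: the paper rewrites \eqref{LAW} as $\partial_t f+\hat{p}\cdot\nabla_x P(f)=-\hat{p}\cdot\nabla_x\{I-P\}(f)+\frac{1}{\tau}(L(f)+\Gamma(f))$, tests against the three collision invariants, uses orthogonality of $L(f)=-\{I-P\}(f)$ to $N$, and then evaluates the transport moments $\langle\hat{p}\cdot\nabla_x P(f),\cdot\rangle$ via the isotropy of $F_E^0$ and Lemma \ref{computation F_E^0} (in particular using (5) to turn $b_0mc^2$ into $k_BT_0(\widetilde{e}(T_0)+k_BT_0)$ for the second identity). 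Your sketch anticipates all of these steps, including the specific moment identities needed.
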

 \begin{proof}
We rewrite \eqref{LAW} as
\begin{equation}\label{LAW 2}
	\partial_t f+\hat{p}\cdot\nabla_x P(f)=-\hat{p}\cdot\nabla_x\{I-P\}(f)+\frac{1}{\tau}\left(L(f)+ \Gamma(f) \right)
\end{equation}
	 Multiplying \eqref{LAW 2} by
$$
\sqrt{F_E^0}\phi(\mathcal{I}),\quad\frac{1}{b_0m}\left(1+\frac{\mathcal{I}}{mc^2}\right)p\sqrt{F_E^0}\phi(\mathcal{I}),\quad \frac{1}{k_BT_0^2\{\widetilde{e}\}^\prime(T_0)}\left\{cp^0\left(1+\frac{\mathcal{I}}{mc^2}\right)-\widetilde{e}(T_0)\right\}\sqrt{F_E^0}\phi(\mathcal{I}),
$$
and integrating over $p,\mathcal{I}\in \mathbb{R}^3\times \mathbb{R}^+$, one finds 
\begin{eqnarray*}
&&\partial_t a(t,x)+\biggl\langle \hat{p}\cdot\nabla_x P(f),\sqrt{F_E^0}\biggl\rangle_{L^2_{p,\mathcal{I}}}=\biggl\langle -\hat{p}\cdot\nabla_x\left\{I-P\right\}(f)+\frac{1}{\tau}\Gamma(f),\sqrt{F_E^0}  \biggl\rangle_{L^2_{p,\mathcal{I}}}  ,\cr
&&\partial_t b(t,x)+ \frac{1}{b_0m}\left\langle \hat{p}\cdot\nabla_x P(f),\left(1+\frac{\mathcal{I}}{mc^2}\right)p\sqrt{F_E^0}   \right\rangle_{L^2_{p,\mathcal{I}}}\cr
&&=\frac{1}{b_0m}\left\langle -\hat{p}\cdot\nabla_x\left\{I-P\right\}(f)+\frac{1}{\tau}\Gamma(f),\left(1+\frac{\mathcal{I}}{mc^2}\right)p\sqrt{F_E^0}  \right\rangle_{L^2_{p,\mathcal{I}}}  ,\cr
&&\partial_t c(t,x) +\frac{1}{k_BT_0^2\{\widetilde{e}\}^\prime(T_0)}\biggl\langle \hat{p}\cdot\nabla_x P(f),\left\{cp^0\left(1+\frac{\mathcal{I}}{mc^2}\right)-\widetilde{e}(T_0)\right\}\sqrt{F_E^0}\biggl\rangle_{L^2_{p,\mathcal{I}}}\cr
&& =\frac{1}{k_BT_0^2\{\widetilde{e}\}^\prime(T_0)} \biggl\langle -\hat{p}\cdot\nabla_x\left\{I-P\right\}(f)+\frac{1}{\tau}\Gamma(f),\left\{cp^0\left(1+\frac{\mathcal{I}}{mc^2}\right)-\widetilde{e}(T_0)\right\}\sqrt{F_E^0}   \biggl\rangle_{L^2_{p,\mathcal{I}}}.
\end{eqnarray*}
Claim that
 \begin{enumerate}
	\item $\displaystyle \left\langle \hat{p}\cdot\nabla_x P(f),\sqrt{F_E^0}\right\rangle_{L^2_{p,\mathcal{I}}}=k_BT_0\nabla_x \cdot b(t,x),$
	\item $\displaystyle \left\langle \hat{p}\cdot\nabla_x P(f),\left(1+\frac{\mathcal{I}}{mc^2}\right)p\sqrt{F_E^0}   \right\rangle_{L^2_{p,\mathcal{I}}}=k_BT_0\nabla_x\left\{a(t,x)+k_BT_0c(t,x)\right\},$
	\item $\displaystyle \left\langle \hat{p}\cdot\nabla_x P(f),\left\{cp^0\left(1+\frac{\mathcal{I}}{mc^2}\right)-\widetilde{e}(T_0)\right\}\sqrt{F_E^0}\right\rangle_{L^2_{p,\mathcal{I}}}= (k_BT_0)^2\nabla_x\cdot b(t,x),$
\end{enumerate}
which comples the proof.\noindent\newline
$\bullet$ Proof of (1): Observe from \eqref{P(f)} that
 	\begin{align}\label{local 1}\begin{split}
 		&\left\langle \hat{p}\cdot\nabla_x P(f),\sqrt{F_E^0}\right\rangle_{L^2_{p,\mathcal{I}}}\cr
 		&=\sum_{i=1}^3 \int_{\mathbb{R}^3}\int_0^\infty \frac{cp^i}{p^0}\partial_{x^i}P(f) \sqrt{F_E^0}\phi(\mathcal{I})\, d\mathcal{I}dp\cr
 		&= c\sum_{i=1}^3 \int_{\mathbb{R}^3}\int_0^\infty \frac{p^i}{p^0}\partial_{x^i}\biggl\{ a(t,x)+b(t,x)\cdot\biggl(1+\frac{\mathcal{I}}{mc^2}\biggl) p+c(t,x)\biggl(cp^0\biggl(1+\frac{\mathcal{I}}{mc^2}\biggl)-\widetilde{e}(T_0)\biggl)\biggl\} \cr
 		&\times F_E^0 \phi(\mathcal{I})\, d\mathcal{I}dp.
\end{split} 	\end{align}
By the spherical symmetry of $F_E^0$, \eqref{local 1} becomes
  	\begin{equation*}
 	\left\langle \hat{p}\cdot\nabla_x P(f),\sqrt{F_E^0}\right\rangle_{L^2_{p,\mathcal{I}}} = c\sum_{i=1}^3 \partial_{x^i}b_i(t,x)\int_{\mathbb{R}^3}\int_0^\infty \frac{(p^i)^2}{p^0}F_E^0\left(1+\frac{\mathcal{I}}{mc^2}\right)\phi(\mathcal{I})\, d\mathcal{I}dp
 \end{equation*} 
which, combined with Lemma \ref{computation F_E^0} (1) gives the proof of (1).
 	\noindent\newline
 $\bullet$	Proof of (2): In the same way as the proof of (2), one finds 
 	\begin{eqnarray*}
 			&&\left\langle \hat{p}\cdot\nabla_x P(f),\left(1+\frac{\mathcal{I}}{mc^2}\right)p^j\sqrt{F_E^0}\right\rangle_{L^2_{p,\mathcal{I}}}\cr
 			&&= c\int_{\mathbb{R}^3}\int_0^\infty \frac{ (p^j)^2}{p^0}\partial_{x^j}\left\{a(t,x)+c(t,x)\left(cp^0\left(1+\frac{\mathcal{I}}{mc^2}\right)-\widetilde{e}(T_0)\right)\right\}F_E^0 \left(1+\frac{\mathcal{I}}{mc^2}\right) \phi(\mathcal{I})\, d\mathcal{I}dp\cr
 			&&= c\partial_{x^j}\left\{a(t,x)-\widetilde{e}(T_0)c(t,x)\right\}\int_{\mathbb{R}^3}\int_0^\infty \frac{ (p^j)^2}{p^0}F_E^0\left(1+\frac{\mathcal{I}}{mc^2}\right)  \phi(\mathcal{I})\, d\mathcal{I}dp\cr
 			&&+ c^2\partial_{x^j}c(t,x)\int_{\mathbb{R}^3}\int_0^\infty  (p^j)^2F_E^0 \left(1+\frac{\mathcal{I}}{mc^2}\right)^2  \phi(\mathcal{I})\, d\mathcal{I}dp,
 	\end{eqnarray*}
for $j=1,2,3$. Using the  Lemma \ref{computation F_E^0} (2) and (5), we then have
 	\begin{align*}
\left\langle \hat{p}\cdot\nabla_x P(f),\left(1+\frac{\mathcal{I}}{mc^2}\right)p^j\sqrt{F_E^0}\right\rangle_{L^2_{p,\mathcal{I}}}&= k_BT_0\partial_{x^j}\left\{a(t,x)-\widetilde{e}(T_0)c(t,x)\right\}+ b_0mc^2\partial_{x^j}c(t,x)\cr
&=k_BT_0\partial_{x^j}\left\{a(t,x)+k_BT_0 c(t,x) \right\}
\end{align*}
which completes the proof of (2).
 	\noindent\newline
 	$\bullet$ Proof of (3): In the same manner, we have 
 	\begin{eqnarray*}
 	&&\left\langle \hat{p}\cdot\nabla_x P(f),\left\{cp^0\left(1+\frac{\mathcal{I}}{mc^2}\right)-\widetilde{e}(T_0)\right\}\sqrt{F_E^0}\right\rangle_{L^2_{p,\mathcal{I}}}\cr
 	&&=c^2\sum_{i=1}^3 \partial_{x^i}b_i(x,t)\int_{\mathbb{R}^3}\int_0^\infty (p^i)^2 F_E^0\left(1+\frac{\mathcal{I}}{mc^2}\right)^2 \phi(\mathcal{I})\, d\mathcal{I}dp\cr
 	&&-c\widetilde{e}(T_0)\sum_{i=1}^3 \partial_{x^i}b_i(x,t)\int_{\mathbb{R}^3}\int_0^\infty \frac{(p^i)^2}{p^0} F_E^0\left(1+\frac{\mathcal{I}}{mc^2}\right) \phi(\mathcal{I})\, d\mathcal{I}dp\cr	
 	&&=(k_BT_0)^2\nabla_x\cdot b(x,t).
 	\end{eqnarray*}
 \end{proof}	  
Now we prove the key estimate of this subsection which is a relativistic generalization of Lemma 6.1 of \cite{Guo 1}:
\begin{lemma}\label{I-P}
	Let	$N\ge 3$. Then for $k=0,\cdots,N-1$, we have
	$$
	\frac{d}{dt}G_k+\|\nabla^{k+1}Pf\|^2_{L^2_{x,p,\mathcal{I}}}\le C \biggl(\left\| \nabla^k\{I-P\}f\right\|^2_{L^2_{x,p,\mathcal{I}}}+\sum_{|{\alpha_1}|\le k}\left\|\|\nabla^{|\alpha_1|} f\|_{L^2_{p,\mathcal{I}}}\|\nabla^{k-|\alpha_1|} f\|_{L^2_{p,\mathcal{I}}}\right\|_{L^2_{x}}^2 \biggl)
	$$
	where $G_k(t)$ denotes
	\begin{align*}
		&\sum_{|\alpha|=k}\int_{\mathbb{R}^3}\langle\{I-P\}\partial^\alpha f,\epsilon_a(p,\mathcal{I}) \rangle_{L^2_{p,\mathcal{I}}}\cdot \nabla_x \partial^\alpha a(t,x)+ \langle \{I-P\}\partial^\alpha f,\epsilon_c(p,\mathcal{I}) \rangle_{L^2_{p,\mathcal{I}}}\cdot \nabla_x\partial^\alpha  c(t,x) \,dx\cr
		&+\sum_{|\alpha|=k}\int_{\mathbb{R}^3}\left\langle \{I-P\}  \partial^\alpha f , \epsilon_b(p,\mathcal{I}) \right\rangle_{L^2_{p,\mathcal{I}}} \nabla_x\cdot \partial^\alpha b(t,x)+ \partial^\alpha b(t,x)\cdot \nabla_x\partial^\alpha\tilde{c}(t,x)\,dx
	\end{align*}
and $\epsilon_a,\epsilon_b$, and $\epsilon_c$ are linear combinations of the basis \eqref{basis}.
\end{lemma}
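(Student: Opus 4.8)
The plan is to carry out the \emph{macroscopic dissipation} estimate in the style of Guo, adapted to the relativistic polyatomic setting. Fix a multi-index $\alpha$ with $|\alpha|=k\le N-1$. Applying $\partial^\alpha$ to the micro--macro relations of Lemma~\ref{M-M} and to the balance laws of Lemma~\ref{balance} produces a closed first--order system for the macroscopic coefficients, or rather for the gradients $\nabla_x\partial^\alpha\tilde a$, $\nabla_x\partial^\alpha b$, $\nabla_x\partial^\alpha\tilde c$, whose right--hand sides are inner products of $(-\partial_t-\hat p\cdot\nabla_x+L)\{I-P\}\partial^\alpha f$ and of $\partial^\alpha\Gamma(f)$ against the basis \eqref{basis}. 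Since $P$ is the orthonormal projection of Lemma~\ref{ortho}, we have $\|\nabla^{k+1}Pf\|^2_{L^2_{x,p,\mathcal I}}\sim\|\nabla^{k+1}a\|^2_{L^2_x}+\|\nabla^{k+1}b\|^2_{L^2_x}+\|\nabla^{k+1}c\|^2_{L^2_x}$, so it suffices to recover the full spatial gradient of $(a,b,c)$, equivalently of $(\tilde a,b,\tilde c)$, at order $k$.

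The functional $G_k$ arises by moving the $\partial_t\{I-P\}\partial^\alpha f$ appearing in those source terms outside the time derivative: it pairs $\{I-P\}\partial^\alpha f$ against the higher moments $\epsilon_a,\epsilon_b,\epsilon_c$ (the relevant linear combinations of the basis functions \eqref{basis}, built from $\tfrac{p^i}{p^0}\sqrt{F_E^0}$, $(1+\mathcal I/mc^2)p^i\sqrt{F_E^0}$, $(1+\mathcal I/mc^2)\tfrac{p^ip^j}{p^0}\sqrt{F_E^0}$ and $(1+\mathcal I/mc^2)p^0\sqrt{F_E^0}$), contracted with $\nabla_x\partial^\alpha\tilde a$, $\nabla_x\!\cdot\partial^\alpha b$, $\nabla_x\partial^\alpha\tilde c$, and it adds the purely macroscopic term $\partial^\alpha b\cdot\nabla_x\partial^\alpha\tilde c$. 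Differentiating $G_k$ in $t$, I substitute $\partial_t\{I-P\}\partial^\alpha f=-\{I-P\}(\hat p\cdot\nabla_x\partial^\alpha f)+\tfrac1\tau(L\partial^\alpha f+\{I-P\}\partial^\alpha\Gamma(f))$ for the microscopic factors and Lemma~\ref{balance} for the time derivatives of $a,b,c,\tilde c$, and integrate by parts in $x$. The decisive contributions are the transport terms $-\langle\{I-P\}(\hat p\cdot\nabla_x\partial^\alpha Pf),\epsilon_a\rangle\cdot\nabla_x\partial^\alpha\tilde a$, its $\epsilon_b$ analogue, and $\partial_t\partial^\alpha b\cdot\nabla_x\partial^\alpha\tilde c=-c\,|\nabla_x\partial^\alpha\tilde c|^2+\cdots$: evaluating the velocity integrals in the first two by the explicit identities of Lemma~\ref{computation F_E^0}, the odd moments vanish by parity while the even ones produce strictly positive multiples of $|\nabla_x\partial^\alpha\tilde a|^2$ and of the symmetrized derivatives of $b$; together with the $\nabla_x\partial^\alpha\tilde c$ term and the relations of Lemma~\ref{M-M}, and after summing over $|\alpha|=k$ and choosing $\epsilon_a,\epsilon_b,\epsilon_c$ so that the resulting quadratic form in $\nabla_x\partial^\alpha(\tilde a,b,\tilde c)$ is positive definite, one obtains the leading term $+\|\nabla^{k+1}Pf\|^2_{L^2_{x,p,\mathcal I}}$ on the left.

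Everything else produced by $\tfrac{d}{dt}G_k$ is lower order. The microscopic remainders --- the factors $\{I-P\}\partial^\alpha f$ and $L\partial^\alpha f=-\{I-P\}\partial^\alpha f$ against $\nabla_x\partial^\alpha(\tilde a,b,\tilde c)$, and the $-\{I-P\}(\hat p\cdot\nabla_x\partial^\alpha\{I-P\}f)$ piece --- are estimated by Cauchy--Schwarz in $(p,\mathcal I)$, which is harmless since $\epsilon_a,\epsilon_b,\epsilon_c$ decay in $p^0$ and $\mathcal I$, followed by Young's inequality: this gives a small multiple of $\|\nabla^{k+1}Pf\|^2_{L^2_{x,p,\mathcal I}}$, reabsorbed into the left side, plus $C\|\nabla^k\{I-P\}f\|^2_{L^2_{x,p,\mathcal I}}$. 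The terms carrying $\partial^\alpha\Gamma(f)$ are bounded directly by Lemma~\ref{nonlinear1}, which delivers exactly $\sum_{|\alpha_1|\le k}\big\|\,\|\nabla^{|\alpha_1|}f\|_{L^2_{p,\mathcal I}}\|\nabla^{k-|\alpha_1|}f\|_{L^2_{p,\mathcal I}}\big\|^2_{L^2_x}$ on the right. Collecting these estimates yields the stated differential inequality.

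The main obstacle is the positive definiteness of the quadratic form that multiplies $|\nabla_x\partial^\alpha(\tilde a,b,\tilde c)|^2$ after the integrations by parts: the test moments $\epsilon_a,\epsilon_b,\epsilon_c$ must be chosen so that the relativistic moment matrix built from Lemma~\ref{computation F_E^0} is non--degenerate, and because of the internal--energy variable $\mathcal I$ and the arbitrary state density $\phi$ those moment computations (precisely the ones packaged in Lemma~\ref{computation F_E^0} and in the normalization of the basis \eqref{basis}) have to be carried out anew rather than quoted from the monatomic theory; once they are in place the linear algebra is essentially classical, and the remainder is routine bookkeeping of the lower--order and nonlinear terms.
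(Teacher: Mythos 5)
Your high-level outline matches the paper: this is indeed a Guo-style macroscopic dissipation estimate, $G_k$ is exactly the functional produced by moving the $\partial_t\{I-P\}\partial^\alpha f$ factors outside the time derivative, and the nonlinear source $\partial^\alpha\Gamma(f)$ is disposed of by Lemma~\ref{nonlinear1} followed by Cauchy--Schwarz and Young, producing the term $\sum_{|\alpha_1|\le k}\|\,\|\nabla^{|\alpha_1|}f\|_{L^2_{p,\mathcal I}}\|\nabla^{k-|\alpha_1|}f\|_{L^2_{p,\mathcal I}}\|^2_{L^2_x}$ on the right.

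However, the mechanism you propose for extracting $\|\nabla^{k+1}Pf\|^2$ on the left is not the paper's, and as written it has a gap. You frame the decisive step as: evaluate the velocity integrals via Lemma~\ref{computation F_E^0}, assemble a relativistic moment matrix, and choose $\epsilon_a,\epsilon_b,\epsilon_c$ so that the resulting quadratic form in $\nabla_x\partial^\alpha(\tilde a,b,\tilde c)$ is positive definite --- naming this non--degeneracy as the main obstacle. The paper does none of this. It reads off each gradient \emph{separately} from the micro--macro relations of Lemma~\ref{M-M}: equation~(4), $c\,\partial_{x_i}\tilde a=l_{a_i}+h_{a_i}$, controls $\nabla_x\partial^\alpha\tilde a$ outright; equation~(3), $\partial_t b_i+c\,\partial_{x_i}\tilde c=l_{bc_i}+h_{bc_i}$, combined with the balance law of Lemma~\ref{balance}(2) for $\partial_t b$, controls $\nabla_x\partial^\alpha\tilde c$; and equation~(5) is first \emph{upgraded} to a Poisson-type identity $c\,\Delta\,\partial^\alpha b_i=\partial^\alpha\sum_{j\ne i}\big[-\partial_i(l_{jj}+h_{jj})+\partial_j(l_{ij}+h_{ij})\big]+\partial_i\partial^\alpha(l_{ii}+h_{ii})$, whose testing against $\partial^\alpha b_i$ gives the full $\|\nabla\partial^\alpha b\|^2_{L^2_x}$. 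The three estimates are then coupled only through small $\varepsilon$-terms, absorbed by choosing $\varepsilon$ small; no simultaneous positive-definite quadratic form in $\nabla(\tilde a,b,\tilde c)$ is needed, and the moment identities of Lemma~\ref{computation F_E^0} have already been spent in deriving Lemmas~\ref{M-M} and~\ref{balance}. In particular, you yourself note that the transport term for $b$ only produces the \emph{symmetrized} derivatives of $b$, but you do not say how to recover the full gradient from these; that is precisely what the elliptic identity derived from Lemma~\ref{M-M}(5) supplies, and omitting it leaves a genuine hole in the argument. The paper also finishes by separately bounding the temporal derivatives $\|\partial_t\partial^\alpha(a,b,c)\|_{L^2_x}$ via Lemma~\ref{balance}, a step your sketch does not address.
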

\begin{proof}

	For $|\alpha|\le N-1$, we have from Lemma $\ref{M-M}_{(5)}$ that
	$$c\Delta \partial^{\alpha}b_{i}(t,x)=\partial^{\alpha}\sum_{j\neq i}\big[-\partial_{i}(l_{jj}+h_{jj})+\partial_{j}(l_{ij}+h_{ij})\big]+\partial_{i}\partial^{\alpha}(l_{ii}+h_{ii}).
	$$
Here $l_{a_0},\cdots, l_{c}$ take the following form:
	\begin{align*}
		\left\langle -\partial_{t}\{I-P\}\partial^{\alpha}f,\epsilon \left(p,\mathcal{I}\right) \right\rangle_{L^2_{p,\mathcal{I}}}-\left\langle \big\{\hat{p}\cdot\nabla_{x}-L\big\}\{I-P\}\partial^{\alpha}f,\epsilon (p,\mathcal{I}) \right\rangle_{L^2_{p,\mathcal{I}}}
	\end{align*}
where $\epsilon(p,\mathcal{I})$ is a suitable linear combination of the basis \eqref{basis}.
We then have
	\begin{align}\label{b1}\begin{split}
		\|\nabla\partial^{\alpha}b\|^{2}_{L^2_x}&\le \int_{\mathbb{R}^3}\left\langle \partial_t\{I-P\}\nabla_x \partial^\alpha f, \epsilon_b(p,\mathcal{I}) \right\rangle_{L^2_{p,\mathcal{I}}} \cdot \partial^\alpha b(t,x)\,dx\cr
		&  +C\sum_{|{\alpha_1}|\le |\alpha|}\left\|\|\partial^{\alpha_1} f\|_{L^2_{p,\mathcal{I}}}\|\partial^{\alpha-{\alpha_1}} f\|_{L^2_{p,\mathcal{I}}}\right\|_{L^2_{x}}\|\nabla_x\partial^{\alpha}b\|_{L^2_x}\cr
		&+\left(\|\{I-P\} \nabla_x\partial^\alpha f \|_{L^2_{x,p,\mathcal{I}}}+\|\{I-P\} \partial^\alpha f \|_{L^2_{x,p,\mathcal{I}}}\right)\|\nabla_x\partial^{\alpha}b\|_{L^2_x}\cr
\end{split}	\end{align}
where we used Lemma \ref{nonlinear1} to obtain
$$
\|\partial^{\alpha} h_{i,j}\|_{L^2_x}=\|\left\langle \partial^{\alpha} \Gamma(f),e_{ij} \right\rangle_{L^2_{p,\mathcal{I}}} \|_{L^2_x}\le C\sum_{|{\alpha_1}|\le |\alpha|}\left\|\|\partial^{\alpha_1} f\|_{L^2_{p,\mathcal{I}}}\|\partial^{\alpha-{\alpha_1}} f\|_{L^2_{p,\mathcal{I}}}\right\|_{L^2_{x}}.
$$
Here the first term on the r.h.s of \eqref{b1} can be written as
	\begin{eqnarray*}
		&&\int_{\mathbb{R}^3}\left\langle \partial_t\{I-P\} \nabla_x \partial^\alpha f , \epsilon_b(p,\mathcal{I}) \right\rangle_{L^2_{p,\mathcal{I}}} \cdot \partial^\alpha b(t,x)\,dx\cr
		&&=\frac{d}{dt} \int_{\mathbb{R}^3}\left\langle \{I-P\} \nabla_x \partial^\alpha f , \epsilon_b(p,\mathcal{I}) \right\rangle_{L^2_{p,\mathcal{I}}} \cdot \partial^\alpha b(t,x)\,dx\cr
		&&-\int_{\mathbb{R}^3}\left\langle \{I-P\} \nabla_x \partial^\alpha f , \epsilon_b(p,\mathcal{I}) \right\rangle_{L^2_{p,\mathcal{I}}} \cdot \partial_t\partial^\alpha b(t,x)\,dx \cr
		&&=-\frac{d}{dt} \int_{\mathbb{R}^3}\left\langle \{I-P\}  \partial^\alpha f , \epsilon_b(p,\mathcal{I}) \right\rangle_{L^2_{p,\mathcal{I}}}  \nabla_x \cdot\partial^\alpha b(t,x)\,dx\cr
		&&-\int_{\mathbb{R}^3}\left\langle \{I-P\} \nabla_x \partial^\alpha f , \epsilon_b(p,\mathcal{I}) \right\rangle_{L^2_{p,\mathcal{I}}} \cdot \partial_t\partial^\alpha b(t,x)\,dx ,
	\end{eqnarray*}
which, combined with Lemma $\ref{balance}_{(2)}$ leads to
\begin{align}\label{b3}\begin{split}
		&\int_{\mathbb{R}^3}\left\langle \partial_t\{I-P\} \nabla_x \partial^\alpha f , \epsilon_b(p,\mathcal{I}) \right\rangle_{L^2_{p,\mathcal{I}}} \cdot \partial^\alpha b(t,x)\,dx\cr
		&\le -\frac{d}{dt} \int_{\mathbb{R}^3}\left\langle \{I-P\}  \partial^\alpha f , \epsilon_b(p,\mathcal{I}) \right\rangle_{L^2_{p,\mathcal{I}}} \nabla_x\cdot \partial^\alpha b(t,x)\,dx +C_\varepsilon\|\{I-P\} \nabla_x\partial^\alpha f \|_{L^2_{x,p,\mathcal{I}}}^2\cr
		&+\varepsilon\|\nabla_x \partial^\alpha a +\nabla_x \partial^\alpha c \|_{L^2_{x }}^2+\varepsilon\sum_{|{\alpha_1}|\le |\alpha|}\left\|\|\partial^{\alpha_1} f\|_{L^2_{p,\mathcal{I}}}\|\partial^{\alpha-{\alpha_1}} f\|_{L^2_{p,\mathcal{I}}}\right\|_{L^2_{x}}^2 .
\end{split}	\end{align}
Go back to \eqref{b1} with \eqref{b3} to get
	\begin{align}\label{bx}\begin{split}
		\|\nabla\partial^{\alpha}b \|^{2}_{L^2_x}&\le -\frac{d}{dt}\int_{\mathbb{R}^3}\left\langle \{I-P\}  \partial^\alpha f , \epsilon_b(p,\mathcal{I}) \right\rangle_{L^2_{p,\mathcal{I}}} \nabla_x\cdot \partial^\alpha b(t,x)\,dx\cr
		&+C\left(\|\{I-P\} \nabla_x\partial^\alpha f \|_{L^2_{x,p,\mathcal{I}}}^2+\|\{I-P\} \partial^\alpha f \|^2_{L^2_{x,p,\mathcal{I}}}\right)\cr
		&+\varepsilon_1\biggl(\|\nabla_x \partial^\alpha a \|_{L^2_{x }}^2+\|\nabla_x \partial^\alpha c \|_{L^2_{x }}^2 +\sum_{|{\alpha_1}|\le |\alpha|}\left\|\|\partial^{\alpha_1} f\|_{L^2_{p,\mathcal{I}}}\|\partial^{\alpha-{\alpha_1}} f\|_{L^2_{p,\mathcal{I}}}\right\|_{L^2_{x}}^2 \biggl). \end{split}\end{align}	
In a similar way, it follows from Lemma $\ref{M-M}_{(4)}$ and Lemma $\ref{balance}_{(1)}$ that
 		\begin{align}\label{ax}\begin{split}
		\|\nabla_x\partial^{\alpha}a \|^{2}_{L^2_x}&\le -\frac{d}{dt}\int_{\mathbb{R}^3}\left\langle \{I-P\}  \partial^\alpha f , \epsilon_a(p,\mathcal{I}) \right\rangle_{L^2_{p,\mathcal{I}}} \cdot\nabla_x  \partial^\alpha a(t,x)\,dx\cr
		&+C\left(\|\{I-P\} \nabla_x\partial^\alpha f \|_{L^2_{x,p,\mathcal{I}}}^2+\|\{I-P\} \partial^\alpha f \|^2_{L^2_{x,p,\mathcal{I}}}\right)\cr
		&+\varepsilon_2\biggl(\|\nabla_x  \partial^\alpha b \|_{L^2_{x }}^2+\sum_{|{\alpha_1}|\le |\alpha|}\left\|\|\partial^{\alpha_1} f\|_{L^2_{p,\mathcal{I}}}\|\partial^{\alpha-{\alpha_1}} f\|_{L^2_{p,\mathcal{I}}}\right\|_{L^2_{x}}^2 \biggl). 
\end{split}	\end{align}
Also, we have from Lemma $\ref{M-M}_{(3)}$ that
\begin{align}\begin{split}\label{c1}
\|\nabla\partial^{\alpha}\tilde{c} \|_{L^2_x}^{2}
&\le -\int_{\mathbb{R}^3}\partial_{t}\partial^{\alpha}b(t,x)\cdot\nabla_x\partial^\alpha\tilde{c}(t,x)+\langle \partial_t\{I-P\}\nabla_x\partial^\alpha f,\epsilon_c(p,\mathcal{I})\rangle _{L^2_{p,\mathcal{I}}}\partial^\alpha\tilde{c}(t,x)\,dx\cr
&+C\left(\|\{I-P\}\nabla_x\partial^\alpha f\|_{L^2_{x,p,\mathcal{I}}}^2+\|\{I-P\}\partial^\alpha f\|_{L^2_{x,p,\mathcal{I}}}^2\right)\cr
&+C\sum_{|{\alpha_1}|\le |\alpha|}\left\|\|\partial^{\alpha_1} f\|_{L^2_{p,\mathcal{I}}}\|\partial^{\alpha-{\alpha_1}} f\|_{L^2_{p,\mathcal{I}}}\right\|_{L^2_{x}}^2.
\end{split}\end{align}
Using integration by parts and Lemma $\ref{balance}_{(3)}$, the first term on the r.h.s of \eqref{c1} can be estimated as
\begin{align*} 
&-\int_{\mathbb{R}^3} \partial_{t}\partial^{\alpha}b(t,x)\cdot\nabla_x \partial^\alpha\tilde{c}(t,x)+\langle \partial_t\{I-P\}\nabla_x\partial^\alpha f,\epsilon_c(p,\mathcal{I})\rangle _{L^2_{p,\mathcal{I}}}\partial^\alpha\tilde{c}(t,x)\,dx\cr
&=-\frac{d}{dt}\biggl\{ \int_{\mathbb{R}^3} \partial^{\alpha}b(t,x)\cdot\nabla_x \partial^\alpha\tilde{c}(t,x)-\langle \{I-P\} \nabla_x\partial^\alpha f,\epsilon_c(p,\mathcal{I})\rangle _{L^2_{p,\mathcal{I}}} \partial^\alpha\tilde{c}(t,x)\,dx\biggl\} \cr
&-\int_{\mathbb{R}^3}\nabla_x\cdot\partial^{\alpha}b(t,x) \partial_{t}\partial^\alpha\tilde{c}(t,x)-\langle \{I-P\}\nabla_x\partial^\alpha f,\epsilon_c(p,\mathcal{I})\rangle _{L^2_{p,\mathcal{I}}}\partial_t\partial^\alpha\tilde{c}(t,x)\,dx \cr
&\le -\frac{d}{dt}\biggl\{ \int_{\mathbb{R}^3} \partial^{\alpha}b(t,x)\cdot\nabla_x \partial^\alpha\tilde{c}(t,x)+\langle \{I-P\} \partial^\alpha f,\epsilon_c(p,\mathcal{I})\rangle _{L^2_{p,\mathcal{I}}}\nabla_x\partial^\alpha\tilde{c}(t,x)\,dx\biggl\} \cr
&+C^\prime\|\nabla_x\partial^\alpha b\|^2_{L^2_x} +C\biggl(\|\{I-P\}\nabla_x\partial^\alpha f\|^2_{L^2_{x,p,\mathcal{I}}}+\sum_{|{\alpha_1}|\le |\alpha|}\left\|\|\partial^{\alpha_1} f\|_{L^2_{p,\mathcal{I}}}\|\partial^{\alpha-{\alpha_1}} f\|_{L^2_{p,\mathcal{I}}}\right\|_{L^2_{x}}^2\biggl).
\end{align*}
which, together with \eqref{c1} gives
\begin{align}\begin{split}\label{c3}
&\|\nabla\partial^{\alpha}\tilde{c}\|_{L^2_x}^{2}\cr
&\le -\frac{d}{dt}\biggl\{ \int_{\mathbb{R}^3} \partial^{\alpha}b(t,x)\cdot\nabla_x \partial^\alpha\tilde{c}(t,x)+\langle \{I-P\} \partial^\alpha f,\epsilon_n(p,\mathcal{I})\rangle _{L^2_{p,\mathcal{I}}}\nabla_x\partial^\alpha\tilde{c}(t,x)\,dx\biggl\}+C^\prime\|\nabla_x\partial^\alpha b\|^2_{L^2_x} \cr
&+C\biggl(\|\{I-P\}\nabla_x\partial^\alpha f\|_{L^2_{x,p,\mathcal{I}}}^2+\|\{I-P\}\partial^\alpha f\|_{L^2_{x,p,\mathcal{I}}}^2+\sum_{|{\alpha_1}|\le |\alpha|}\left\|\|\partial^{\alpha_1} f\|_{L^2_{p,\mathcal{I}}}\|\partial^{\alpha-{\alpha_1}} f\|_{L^2_{p,\mathcal{I}}}\right\|_{L^2_{x}}^2\biggl).
\end{split}\end{align}
For sufficiently small $\varepsilon_1$ and $\varepsilon_2$ satisfying $C^\prime  \varepsilon_1 \ll1$, combining \eqref{bx}, \eqref{ax} and \eqref{c3} gives the desired result for spatial derivative.  We now employ the case of temporal derivative $d/dt$. Recall from Lemma $\ref{balance}_{(1)}$ that
	\begin{align*}
\partial_t\partial^\alpha a(t,x)&=-k_BT_0\nabla_x \cdot \partial^\alpha b(t,x)+\left\langle -\hat{p}\cdot\nabla_x\left\{I-P\right\}(\partial^\alpha f)+\partial^\alpha h,\sqrt{F_E^0}  \right\rangle_{L^2_{p,\mathcal{I}}}\cr
&\le -k_BT_0\nabla_x \cdot \partial^\alpha b(t,x)+C\|\nabla_x\left\{I-P\right\}\partial^\alpha f\|_{L^2_{p,\mathcal{I}}}\cr
&+C\sum_{|{\alpha_1}|\le |\alpha|}\left\|\|\partial^{\alpha_1} f\|_{L^2_{p,\mathcal{I}}}\|\partial^{\alpha-{\alpha_1}} f\|_{L^2_{p,\mathcal{I}}}\right\|_{L^2_{x}}.
\end{align*}
Taking inner product with $\partial_t\partial^\alpha a(t,x)$, the above estimate leads to
\begin{align*}
\|\partial_t\partial^\alpha a\|^2_{L^2_x}&\le C_\varepsilon\biggl(\|\nabla_x \partial^\alpha b\|^2_{L^2_x}+ \|\nabla_x\left\{I-P\right\}\partial^\alpha f\|_{L^2_{x,p,\mathcal{I}}}^2+C\sum_{|{\alpha_1}|\le |\alpha|}\left\|\|\partial^{\alpha_1} f\|_{L^2_{p,\mathcal{I}}}\|\partial^{\alpha-{\alpha_1}} f\|_{L^2_{p,\mathcal{I}}}\right\|_{L^2_{x}}^2\biggl)\cr
&+\varepsilon \|\partial_t\partial^\alpha a\|^2_{L^2_x}, 
\end{align*}
yielding
\begin{equation}\label{t-a}
\|\partial_t\partial^\alpha a\|^2_{L^2_x}
\le C\biggl(\|\nabla_x \partial^\alpha b\|^2_{L^2_x}+ \|\nabla_x\left\{I-P\right\}\partial^\alpha f\|^2_{L^2_{x,p,\mathcal{I}}}+C\sum_{|{\alpha_1}|\le |\alpha|}\left\|\|\partial^{\alpha_1} f\|_{L^2_{p,\mathcal{I}}}\|\partial^{\alpha-{\alpha_1}} f\|_{L^2_{p,\mathcal{I}}}\right\|_{L^2_{x}}^2\biggl)
\end{equation}
for sufficiently $\varepsilon$. In the same manner, one can have from Lemma $\ref{balance}_{(2),(3)}$ that
\begin{align}\label{t-b}\begin{split}
\|\partial_t\partial^\alpha b\|^2_{L^2_x}
&\le C\left(\|\nabla_x \partial^\alpha a\|^2_{L^2_x}+\|\nabla_x \partial^\alpha c\|^2_{L^2_x}+ \|\nabla_x\left\{I-P\right\}\partial^\alpha f\|_{L^2_{x,p,\mathcal{I}}}^2\right),\cr
&+C\sum_{|{\alpha_1}|\le |\alpha|}\left\|\|\partial^{\alpha_1} f\|_{L^2_{p,\mathcal{I}}}\|\partial^{\alpha-{\alpha_1}} f\|_{L^2_{p,\mathcal{I}}}\right\|_{L^2_{x}}^2
\end{split}\end{align}
and
\begin{align}\label{t-b-2}\begin{split}
\|\partial_t\partial^\alpha c\|^2_{L^2_x}
&\le C\left(\|\nabla_x \partial^\alpha b\|^2_{L^2_x}+ \|\nabla_x\left\{I-P\right\}\partial^\alpha f\|_{L^2_{x,p,\mathcal{I}}}^2\right)\cr
&+C\sum_{|{\alpha_1}|\le |\alpha|}\left\|\|\partial^{\alpha_1} f\|_{L^2_{p,\mathcal{I}}}\|\partial^{\alpha-{\alpha_1}} f\|_{L^2_{p,\mathcal{I}}}\right\|_{L^2_{x}}^2.
\end{split}\end{align}
Combining \eqref{t-a}--\eqref{t-b-2} and results for the spatial derivative  completes the proof.

\end{proof}


Now we are ready to prove the rest of Theorem \ref{main3}. Since the argument is similar as in \cite[Section 4]{Guo-Wang}, we only present a sketch of the proof of Theorem \ref{main3} (2)--(3) for brevity.\noindent\newline
$\bullet$ Proof of Theorem \ref{main3} (2): Let $0\le \ell\le N-1$. Applying $\nabla^k$ to \eqref{LAW}, taking the $L^2_{x,p,\mathcal{I}}$ inner product with $\nabla^k f$, and using Proposition \ref{pro}, Lemmas \ref{nonlinear1} and \ref{nonlinear3}, we have
\begin{align}\label{nablak}\begin{split}
		&\frac{1}{2}\frac{d}{dt}\sum_{\ell\le k\le N-1} \|\nabla^k f\|^2_{L^2_{x,p,\mathcal{I}}}+ \sum_{\ell\le k\le N-1}\|\nabla^k \{I-P\}f\|^2_{L^2_{x,p,\mathcal{I}}}\cr
		&\le C\sum_{\ell\le k\le N-1}\sum_{|{\alpha_1}|\le k}\left\|\|\nabla^{|\alpha_1|} f\|_{L^2_{p,\mathcal{I}}}\|\nabla^{k-|\alpha_1|} f\|_{L^2_{p,\mathcal{I}}}\right\|_{L^2_x}^2.
\end{split}\end{align}
Using the Sobolev type inequalities and Minkowski's inequality, the right-hand side of \eqref{nablak}  can be bounded from above as
\begin{align}\label{rhs1}\begin{split}
		\sum_{|{\alpha_1}|\le k}\left\|\|\nabla^{|\alpha_1|} f\|_{L^2_{p,\mathcal{I}}}\|\nabla^{k-|\alpha_1|} f\|_{L^2_{p,\mathcal{I}}}\right\|_{L^2_x}^2\le C\delta^2\biggl(\|\nabla^{k+1}f\|^2_{L^2_{x,p,\mathcal{I}}}+ \|\nabla^k \{I-P\}f\|^2_{L^2_{x,p,\mathcal{I}}}\biggl)
\end{split}\end{align}
for $k=0,\cdots,N-1$, and
\begin{align}\label{rhs2}\begin{split}
		\sum_{|{\alpha_1}|\le k}\left\|\|\nabla^{|\alpha_1|} f\|_{L^2_{p,\mathcal{I}}}\|\nabla^{k-|\alpha_1|} f\|_{L^2_{p,\mathcal{I}}}\right\|_{L^2_x}^2\le		C\delta^2\|\nabla^{N}f\|^2_{L^2_{x,p,\mathcal{I}}} 
\end{split}\end{align}
for $k=N$ respectively.  Combining \eqref{nablak}--\eqref{rhs2}, one can see that 
\begin{align}\label{nablaN}\begin{split}
		&\frac{d}{dt}\sum_{\ell\le k\le N} \|\nabla^k f\|^2_{L^2_{x,p,\mathcal{I}}}+C \sum_{\ell\le k\le N}\|\nabla^k \{I-P\}f\|^2_{L^2_{x,p,\mathcal{I}}}\cr
		&\le C\delta^2\Biggl(\sum_{\ell+1\le k\le N}\|\nabla^{k}f\|^2_{L^2_{x,p,\mathcal{I}}}+\sum_{\ell\le k\le N}\|\nabla^k \{I-P\}f\|^2_{L^2_{x,p,\mathcal{I}}}\Biggl).
\end{split}\end{align}
On the other hand, it follows from  Lemma \ref{I-P}, Sobolev interpolation and Minkowski's inequality that
\begin{align*} 
	& \frac{d}{dt}\eta\sum_{\ell\le k\le N-1}G_k+\eta\sum_{\ell+1\le k\le N} \|\nabla^{k}Pf\|^2_{L^2_{x,p,\mathcal{I}}}\cr&\le C\eta \sum_{\ell\le k\le N-1} \left\| \nabla^k\{I-P\}f\right\|^2_{L^2_{x,p,\mathcal{I}}}  +C\eta \sum_{\ell\le k\le N-1}\sum_{|{\alpha_1}|\le k}\left\|\|\nabla^{|\alpha_1|} f\|_{L^2_{p,\mathcal{I}}}\|\nabla^{k-|\alpha_1|} f\|_{L^2_{p,\mathcal{I}}}\right\|_{L^2_{x}}^2\cr
	&\le C\eta\sum_{\ell\le k\le N}  \left\| \nabla^k\{I-P\}f\right\|^2_{L^2_{x,p,\mathcal{I}}}.
\end{align*}
This, together with \eqref{nablaN} yields that for sufficiently small $\eta$ and $\delta$,
\begin{align}\label{mathcal energy} 
	&\frac{d}{dt}\mathcal{E}_\ell(t)+\|\nabla^\ell \{I-P\}f \|^2_{L^2_{x,p,\mathcal{I}}}+\sum_{\ell+1\le k\le N}\|\nabla^{k}f\|^2_{L^2_{x,p,\mathcal{I}}}\le  0,
\end{align}
where $\mathcal{E}_\ell(t)$ denotes
$$
\mathcal{E}_\ell(t)=\sum_{\ell\le k\le N}\|\nabla^k f \|^2_{L^2_{x,p,\mathcal{I}}}+\eta\sum_{\ell\le k\le N-1}G_k.
$$
Using the following Sobolev interpolation (for details, please see \cite[Lemma A.4]{Guo-Wang}):
\begin{equation*}\label{interpolation}
\|\nabla^\ell f\|_{L^2}\le C\|\nabla^{\ell+1}f \|^\theta_{L^2}\|\Lambda^{-s}f \|^\theta_{L^2},\qquad\text{where}\qquad \theta=\frac{1}{\ell+1+s},\ s,\ell \ge0,
\end{equation*}
we have from \eqref{mathcal energy} and Theorem \ref{main3} (1) that
 \begin{equation*}
 	\frac{d}{dt}\mathcal{E}_\ell(t)+ C_0\biggl(\sum_{\ell+1\le k\le N}\|\nabla^{k-1}f\|^2_{L^2_{x,p,\mathcal{I}}}\biggl)^{1+\frac{1}{\ell+s}} \le  0.
 \end{equation*}
Since $\mathcal{E}_\ell(t)$ is equivalent to $\sum_{\ell\le k\le N}\|\nabla^k f \|^2_{L^2_{x,p,\mathcal{I}}}$ for sufficiently small $\eta$, this gives the desired result.\noindent\newline
$\bullet$ Proof of Theorem \ref{main3} (3): Applying $\{I-P\}$ to \eqref{LAW}, and using Proposition \eqref{pro} and Lemma \ref{nonlinear3}, one finds
\begin{equation*}
\partial_t \{I-P\}f+\hat{p}\cdot\nabla_x \{I-P\}f+L\{I-P\}f=\Gamma(f)-\hat{p}\cdot\nabla_x Pf+P(\hat{p}\cdot\nabla_xf).
\end{equation*}
Applying $\nabla^k$ $(k=0,\cdots,N-2)$ and taking the $L^2_{x,p,\mathcal{I}}$ inner product with $\nabla^k\{I-P\}f$, this leads to
\begin{align*}
	&\frac{1}{2}\frac{d}{dt}\|\nabla^k \{I-P\}f\|^2_{L^2_{x,p,\mathcal{I}}}+\|\nabla^k \{I-P\}f\|^2_{L^2_{x,p,\mathcal{I}}}\cr
	&\le \left\langle \nabla^k\Gamma(f),\nabla^k\{I-P\}f\right\rangle_{L^2_{x,p,\mathcal{I}}}-\left\langle \hat{p}\cdot\nabla_x P\nabla^kf-P(\hat{p}\cdot\nabla_x\nabla^kf),\nabla^k\{I-P\}f\right\rangle_{L^2_{x,p,\mathcal{I}}},
\end{align*}
which, combined with Lemma \ref{nonlinear1} gives that for small $\varepsilon$,
\begin{align}\label{I-P 2}\begin{split}
	&\frac{1}{2}\frac{d}{dt}\|\nabla^k \{I-P\}f\|^2_{L^2_{x,p,\mathcal{I}}}+\|\nabla^k \{I-P\}f\|^2_{L^2_{x,p,\mathcal{I}}}\cr
	&\le C_\varepsilon \Biggl(\sum_{|{\alpha_1}|\le k}\left\|\|\nabla^{|\alpha_1|} f\|_{L^2_{p,\mathcal{I}}}\|\nabla^{k-|\alpha_1|} f\|_{L^2_{p,\mathcal{I}}}\right\|_{L^2_x}^2+\|\nabla^{k+1}f\|^2_{L^2_{x,p,\mathcal{I}}}\Biggl)+\varepsilon\|\nabla^k \{I-P\}f\|^2_{L^2_{x,p,\mathcal{I}}}. 
\end{split}\end{align}
On the other hand, note from \cite[Lemma 4.4]{Guo-Wang} that using the Sobolev type inequalities and Minkowski's inequality, one can have
\begin{align*} 
		\sum_{|{\alpha_1}|\le k}\left\|\|\nabla^{|\alpha_1|} f\|_{L^2_{p,\mathcal{I}}}\|\nabla^{k-|\alpha_1|} f\|_{L^2_{p,\mathcal{I}}}\right\|_{L^2_x}^2\le C\delta^2\biggl(\|\nabla^{k+1}f\|^2_{L^2_{x,p,\mathcal{I}}}+ \|\nabla^k \{I-P\}f\|^2_{L^2_{x,p,\mathcal{I}}}\biggl).
 \end{align*}
This, together with \eqref{I-P 2} gives that for sufficiently small $\varepsilon$ and $\delta$,
\begin{align}\label{target}
	& \frac{d}{dt}\|\nabla^k \{I-P\}f\|^2_{L^2_{x,p,\mathcal{I}}}+\|\nabla^k \{I-P\}f\|^2_{L^2_{x,p,\mathcal{I}}}\le C \|\nabla^{k+1}f\|^2_{L^2_{x,p,\mathcal{I}}}.
\end{align}
For $k=1,\cdots,N-2$, applying the Gronwall inequality to \eqref{target} and using Theorem \ref{main3} (2) with $\ell=k+1$, one finds
\begin{align*}
\|\nabla^k \{I-P\}f\|^2_{L^2_{x,p,\mathcal{I}}}&\le e^{-t}\|\nabla^k \{I-P\}f_0\|^2_{L^2_{x,p,\mathcal{I}}}+C\int_0^t e^{-(t-s)}\|\nabla^{k+1}f(s)\|^2_{L^2_{x,p,\mathcal{I}}}\,ds\cr
&\le C_0(1+t)^{-(k+1+s)}
\end{align*}
which, together with the interpolation gives the desired result for $-s<k\le N-2$.

\smallskip

\noindent{\bf Acknowledgement}
Byung-Hoon Hwang was supported by Basic Science Research Program through the National Research Foundation of Korea(NRF) funded by the Ministry of Education(No. NRF-2019R1A6A1A10073079). Seok-Bae Yun was supported by Samsung Science and Technology Foundation under Project Number SSTF-BA1801-02.
\bibliographystyle{amsplain}

\begin{thebibliography}{10}
\bibitem{AW} Anderson, J. L., Witting, H. R.: A relativistic relaxation-time model for the Boltzmann equation. Physica. {\bf{74}} (1974) 466--488.
	\bibitem{A} Andreasson, H.: Regularity of the gain term and strong $L^1$-convergence to equilibrium for the relativistic Boltzmann equation, SIAM J. Math. Anal. {\bf{27}} (1996) 1386--1405.
	\bibitem{BCNS} Bellouquid, A., Calvo, J., Nieto, J., Soler, J.: On the relativistic BGK-Boltzmann model: asymptotics and hydrodynamics. J. Stat. Phys. {\bf 149} (2012) 284--316.
	\bibitem{BNU} Bellouquid, A., Nieto, J., Urrutia, L.: Global existence and asymptotic stability near equilibrium for the relativistic BGK model. Nonlinear Anal. {\bf 114} (2015) 87--104.
	\bibitem{BGK} Bhatnagar, P. L., Gross, E. P. and Krook, M. L.: A model for collision processes in gases. I. Small amplitude processes in charged
	and neutral one-component systems, Phys. Rev. {\bf 94} (1954) 511-525.
	\bibitem{B} Bichteler, K.: On the Cauchy problem of the relativistic Boltzmann equation. Comm. Math. Phys. {\bf 4} (1967) 352--364.
	\bibitem{Cal} Calogero, S.: The Newtonian limit of the relativistic Boltzmann equation. J. Math. Phys. {\bf45} (2004) 4042--4052.
	\bibitem{CJS} Calvo, J., Jabin, P.-E., Soler, J.: Global weak solutions to the relativistic BGK equation. Comm. Partial Differential Equations {\bf{45}}(3) (2020) 191--229.
	\bibitem{PR3} Carrisi, M. C., Pennisi, S., Ruggeri, T.: Integrability properties for relativistic extended thermodynamics of polyatomic gas. Ricerche. mat. {\bf{68}}(1) (2019) 57--73.
	\bibitem{CPR1} Carrisi, M. C., Pennisi, S., Ruggeri T.: Monatomic limit of relativistic extended thermodynamics of polyatomic gas. Continuum Mech. Thermodyn. {\bf{31}}(2) (2019) 401--412.
	\bibitem{CPR2} Carrisi, M. C., Pennisi, S., Ruggeri T.: Production terms in relativistic extended thermodynamics of gas with internal structure via a new BGK model. Ann. Phys. {\bf{405}} (2019) 298--307.
	\bibitem{CK} Cercignani, C., Kremer, G. M.: The Relativistic Boltzmann Equation: Theory and Applications. Birkh\"{a}user, Basel (2002).
	\bibitem{CKT} Chen, Y., Kuang, Y., Tang, H.: Second-order accurate genuine BGK schemes for the ultra-relativistic flow simulations. J. Comput. Phys. {\bf{349}} (2017) 300--327.
	\bibitem{DHMNS2} Denicol, G. S., Heinz, U., Martinez, M., Noronha, J., Strickland, M.: Studying the validity of relativistic hydrodynamics with a new exact solution of the Boltzmann equation. Phys. Rev. D. {\bf{90}} (2014) 125026.
	
	\bibitem{D} Dudy\'{n}ski, M.: On the linearized relativistic Boltzmann equation. II. Existence of hydrodynamics.
	J. Stat. Phys. {\bf 57} (1--2) (1989) 199--245.  	
	\bibitem{Dud3} Dudy\'{n}ski, M., Ekiel-Je\.{z}ewska, M. L.: Global existence proof for relativistic Boltzmann equation, J. Stat. Phys. {\bf 66} (3) (1992) 991--1001.
	\bibitem{DE} Dudy\'{n}ski, M., Ekiel-Je\.{z}ewska, M. L.: On the linearized relativistic Boltzmann equation. I. Existence of
	solutions. Comm. Math. Phys. {\bf 115} (4) (1985) 607--629.
	\bibitem{E} Eckart, C., Phys. Rev. {\bf 58} (1940) 919.
	\bibitem{FRS} Florkowski, W., Ryblewski, R., Strickland, M.: Anisotropic hydrodynamics for rapidly expanding systems. Nucl. Phys. A {\bf{916}} (2013) 249--259.
	\bibitem{FRS2} Florkowski, W., Ryblewski, R., Strickland, M.: Testing viscous and anisotropic hydrodynamics in an exactly solvable case. Phys. Rev. C. {\bf{88}} (2013) 024903.
	\bibitem{GS1} Glassey, R. T., Strauss, W. A.: Asymptotic stability of the relativistic Maxwellian. Publ. Res. Inst. Math. Sci. {\bf 29} (2) (1993) 301--347.
	\bibitem{GS2} Glassey, R. T., Strauss, W. A.: Asymptotic stability of the relativistic Maxwellian via fourteen moments. Trans. Th. Stat. Phys. {\bf 24} (4--5) (1995) 657--678.
	
	\bibitem{Guo 1}  Guo, Y.: Boltzmann diffusive limit beyond the Navier-Stokes approximation. Comm. Pure Appl. Math. {\bf{59}} (2006) 626--687.   
	\bibitem{Guo whole} Guo, Y.: The Boltzmann equation in the whole space. Indiana Univ. Math. J. {\bf 53} (2004). no.4, 1081-1094.
	\bibitem{Guo VMB} Guo, Y.: The Vlasov-Maxwell-Boltzmann system near Maxwellians. Invent. Math. {\bf 153} (2003) no.3, 593-630.
	\bibitem{Guo Strain Momentum} Guo, Y., Strain, Robert M.: Momentum regularity and stability of the relativistic Vlasov-Maxwell-Boltzmann system. Comm. Math. Phys. {\bf 310} (3) (2012) 649--673.
	
	\bibitem{Guo-Wang} Guo, Y., Wang, Y.: Decay of dissipative equations and negative Sobolev spaces. Comm. Partial Differential Equations {\bf 37} (2012) no. 12, 2165–2208.
	
	\bibitem{HM} Hakim, R., Mornas, L.: Collective effects on transport coefficients of relativistic nuclear matter. Phys. Rev. C. {\bf{47}} (1993) 2846.
	
	\bibitem{Holway} Holway, L.H.: Kinetic theory of schock structure using and ellipsoidal distribution function. Rarefied Gas Dynamics, Vol. I
	(Proc. Fourth Internat. Sympos., Univ. Toronto, 1964), Academic Press, New York, (1966), pp. 193-215.
	
	\bibitem{HY2} Hwang, B.-H., Yun, S.-B.: Anderson--Witting model of the relativistic Boltzmann equation near equilibrium. J. Stat. Phys. {\bf{176}}(4) (2019) 1009--1045.
	\bibitem{HY3} Hwang, B.-H., Yun, S.-B.: Stationary solutions to the Anderson--Witting model of the relativistic Boltzmann equation in a bounded interval. SIAM J. Math. Anal. {\bf 53} (1) (2021) 730--753.
	\bibitem{HY1} Hwang, B.-H., Yun, S.-B.: Stationary solutions to the boundary value problem for the relativistic BGK model in a slab. Kinet. Relat. Models {\bf{12}}(4) (2019) 749--764.
	\bibitem{JRS} Jaiswal, A., Ryblewski, R., Strickland, M.: Transport coefficients for bulk viscous evolution in the relaxation time approximation. Phys. Rev. C. {\bf{90}} (2014) 044908.
	\bibitem{JSY} Jang, J. W., Strain, Robert M., Yun, S.-B.: Propagation of uniform upper bounds for the spatially homogeneous relativistic Boltzmann equation. Preprint. Available at https://arxiv.org/abs/1907.05784.	
	\bibitem{JY} Jang, J. W., Yun, S.-B.: Gain of regularity for the relativistic collision operator. Appl. Math. Lett. {\bf{90}} (2019) 162--169.
	\bibitem{Jiang2} Jiang, Z.: On the Cauchy problem for the relativistic Boltzmann equation in a periodic box: global existence. Transport Theory Statist. Phys. {\bf 28} (6) (1999) 617--628.
	\bibitem{Jiang1} Jiang, Z.: On the relativistic Boltzmann equation, Acta Math. Sci. {\bf 18} (3) (1998) 348--360.
	\bibitem{Juttner} J\"{u}ttner, F.: Das Maxwellsche Gesetz der Geschwindigkeitsverteilung in der Relativtheorie. Ann. Phys. {\bf{339}} (1911) 856--882.
	\bibitem{LL} Landau, L. D., Lifshitz, E. M.: Fluid Mechanics. Pergamon Press. (1959).
	\bibitem{LR} Lee, H., Rendall, A.: The spatially homogeneous relativistic Boltzmann equation with a hard potential, Comm. Partial Differential Equations {\bf 38} (12) (2013) 2238--2262.
	
	\bibitem{LMR}  Liu, I.-S.,  M\"uller, I.,  Ruggeri, T.:  Relativistic thermodynamics of gases.  Ann. Phys., {\bf 169}, (1986) 191--219.
	
	\bibitem{Mar3} Marle, C.: Modele cin\'{e}tique pour l\textquoteright \'{e}tablissement des lois de la conduction de la
	chaleur et de la viscosit\'{e} en th\'{e}orie de la relativit\'{e}. C. R. Acad. Sci. Paris {\bf{260}} (1965) 6539--6541.
	
	
	\bibitem{Mar2} Marle, C.: Sur l\textquoteright\'{e}tablissement des equations de l\textquoteright hydrodynamique des fluides relativistes dissipatifs. II. M\'{e}thodes de r\'{e}solution approch\'{e}e de l\textquoteright equation de Boltzmann
	relativiste. Ann. Inst. Henri Poincar\'{e} {\bf 10} (1969) 127--194.
	\bibitem{PR} Pennisi, S., Ruggeri, T.: A new BGK model for relativistic kinetic theory of monatomic and polyatomic gases. J. Phys. Conf. Ser. {\bf{1035}} (2018) 012005.
	\bibitem{PR5} Pennisi, S., Ruggeri, T.: Classical limit of relativistic moments associated with Boltzmann–Chernikov equation: Optimal choice of moments in classical theory. J. Stat. Phys. {\bf{179}} (2020) 231--246.
	
	\bibitem{PR6} Pennisi, S., Ruggeri, T.: Relativistic Eulerian rarefied gas with internal structure. J. Math. Phys. {\bf{59}} (2018) 043102.
	
	\bibitem{PR2} Pennisi, S., Ruggeri, T.: Relativistic extended thermodynamics of rarefied polyatomic gas. Ann. Phys. {\bf{377}} (2017) 414--445.
	


	\bibitem{newbook} Ruggeri, T., Sugiyama, M.: Classical and Relativistic Rational Extended Thermodynamics of Gases. Springer, Heidelberg, New York, Dordrecht, London, ISBN 978-3-030-59143-4, (2021).
	
	\bibitem{RXZ} Ruggeri, T., Xiao, Q., Zhao, H.: Nonlinear hyperbolic waves in relativistic gases of massive particles with Synge energy.   Arch. Rational Mech. Anal.  {\bf 239}, (2021) 1061-1109.
		
	
	\bibitem{SS} Speck, J., Strain, Robert M.: Hilbert expansion from the Boltzmann equation to relativistic fluids. Comm. Math. Phys. {\bf 304} (2011) 229--280.
	\bibitem{Strain Yun} Strain, Robert M., Yun, S.-B.: Spatially homogenous Boltzmann equation for relativistic particles, SIAM J. Math. Anal. {\bf 46} (1) (2014) 917--938.
	\bibitem{Strain Zhu} Strain, Robert M., Zhu, K.: Large-time decay of the soft potential relativistic Boltzmann equation in $\mathbb{R}^3_x$. Kinet. Relat. Models {\bf 5} (2) (2012) 383--415.
	
	\bibitem{Strain1} Strain, Robert M.: Asymptotic stability of the relativistic Boltzmann equation for the soft potentials. Comm. Math. Phys. {\bf 300} (2010) 529--597.
	\bibitem{Strain2} Strain, Robert M.: Global Newtonian limit for the relativistic Boltzmann equation near vacuum. SIAM J. Math. Anal. {\bf 42} (2010) 1568--1601.
	\bibitem{W} Wennberg, B.: The geometry of binary collisions and generalized Radon transforms. Arch. Rational Mech. Anal. {\bf139} (1997) no. 3, 291--302.
	
	

\end{thebibliography}

\end{document}